\documentclass{amsart}

\makeatletter
\@namedef{subjclassname@2020}{%
  \textup{2020} Mathematics Subject Classification}
\makeatother

\usepackage[T1]{fontenc}
\usepackage{amsmath,amssymb,amsfonts,mathtools}
\usepackage{amsthm}
\usepackage[dvipsnames]{xcolor}
\usepackage{enumitem}
\usepackage{microtype}

\newtheorem{theorem}{Theorem}[section]
\newtheorem{lemma}[theorem]{Lemma}
\newtheorem{proposition}[theorem]{Proposition}
\newtheorem{corollary}[theorem]{Corollary}

\theoremstyle{definition}
\newtheorem{definition}[theorem]{Definition}
\newtheorem{example}[theorem]{Example}

\theoremstyle{remark}
\newtheorem{remark}[theorem]{Remark}

\numberwithin{equation}{section}

\usepackage{hyperref}
\hypersetup{
    colorlinks=true,
    linkcolor=Blue,
    citecolor=ForestGreen,
    filecolor=Blue,
    urlcolor=BrickRed,
    linktocpage=true
}

\begin{document}

\title[An Algebraic View of Amenable Groupoids]{An Algebraic View of Amenable Groupoids}

\author{Karol Herrera}
\address{Universidad ECCI}
\curraddr{Campus Universitario}
\email{kherrerac@ecci.edu.co}

\author{Andr\'es Rubiano}
\address{Universidad Distrital Francisco Jos\'e de Caldas}
\curraddr{Campus Universitario}
\email{aarubianos@udistrital.edu.co}

\subjclass[2020]{Primary 22A22, 16S99; Secondary 43A07, 20M18}

\keywords{Amenable groupoid, ample groupoid, Steinberg algebra, algebraic amenability, F{\o}lner subspace, isotropy group, subgroupoid}

\date{}
\dedicatory{Dedicated to Juan Hern\'andez}

\begin{abstract}
We introduce source-anchored algebraic amenability for Hausdorff ample groupoids, a F{\o}lner condition formulated in the Steinberg algebra and localized at compact open subsets of the unit space. The unanchored bisectional condition is exactly algebraic amenability of the Steinberg algebra, whereas the anchored version recovers ordinary amenability for one-unit groupoids and detects behavior invisible to the unanchored theory. We establish permanence under isomorphisms, open invariant reductions, disjoint unions, finite products, directed unions and AF approximations, while arbitrary subgroupoids need not inherit the property. For countable transitive discrete groupoids, infinite unit spaces are amenable through range escape, whereas finite unit spaces are amenable precisely when their isotropy groups are. This yields a characterization of hereditary algebraic amenability in terms of isotropy.
\end{abstract}

\maketitle

\section{Introduction}\label{introduction}

Amenability provides a way of replacing exact invariance by finite configurations whose boundary becomes negligible. For a discrete group, invariant means, F{\o}lner sets, fixed-point properties and the absence of paradoxical decompositions all describe the same phenomenon; see \cite{vNeu29,Fol55,Day57,Gre69,Pat88}. Among these formulations, the F{\o}lner condition is especially suitable for algebraic purposes, since it measures invariance through the growth of finite sets under a prescribed finite family of translations.

Groupoids retain the local symmetry of groups while allowing multiplication only along compatible source and range fibers. Their amenability theory has been developed mainly through analytic and dynamical methods, involving Haar systems, invariant means, representations and groupoid $C^*$-algebras; see \cite{Ren80,ADR00}. Recent work continues this program through exactness and weak containment \cite{AnantharamanDelaroche23}, weak amenability and completely bounded approximation properties \cite{Pacheco25}, and amenability of continuous unitary representations on Hilbert bundles \cite{SridharanKumar26}. The purpose of the present paper is different: we seek a genuinely algebraic F{\o}lner theory adapted to ample groupoids.

For a Hausdorff ample groupoid $\mathcal G$, the Steinberg algebra $A_{\Bbbk}(\mathcal G)$ records the compact open geometry of $\mathcal G$. It is generated by the characteristic functions of compact open bisections, and convolution mirrors bisection multiplication through
$$
\mathbf 1_U\ast\mathbf 1_V=\mathbf 1_{UV}.
$$
Steinberg algebras include group algebras, inverse semigroup algebras and Leavitt path algebras among their basic examples; see \cite{Ste10,CFST14,ClarkHazrat20}. When $\mathcal G$ is a discrete group $G$ viewed as a one-unit groupoid, one simply recovers
$$
A_{\Bbbk}(\mathcal G)\cong\Bbbk G.
$$
This makes the algebraic F{\o}lner condition of Elek, Bartholdi, and Ara--Li--Lled\'o--Wu a natural starting point; see \cite{Elek03,Bartholdi08,AraLiLledoWu18}. In that setting, finite sets are replaced by non-zero finite-dimensional subspaces, and the boundary is measured by the increase in dimension under multiplication.

Applying algebraic amenability directly to $A_{\Bbbk}(\mathcal G)$ does not, however, fully reflect the unit-space geometry. A F{\o}lner subspace may be supported in one favorable component and avoid another component where non-amenable behavior occurs. This possibility is absent for groups, whose unit space consists of a single point. To retain the local information carried by $\mathcal G^{(0)}$, we require the approximating subspace to lie, for every non-empty compact open subset $C\subseteq\mathcal G^{(0)}$, in the principal left ideal
$$
A_{\Bbbk}(\mathcal G)\mathbf 1_C = \{f\in A_{\Bbbk}(\mathcal G)\mid \operatorname{supp}(f)\subseteq s^{-1}(C)\}.
$$
We call the resulting property \emph{source-anchored left algebraic amenability}. The anchor fixes the source of the approximation while leaving its range free, a feature that becomes decisive in the discrete theory.

The unanchored bisectional condition turns out to be exactly algebraic amenability of the Steinberg algebra. The anchored condition agrees with ordinary group amenability in the one-unit case and behaves well under the natural constructions available for ample groupoids: isomorphisms, open invariant reductions, disjoint unions, finite products, directed unions and finite-dimensional approximations. In particular, AF groupoids satisfy the condition. The subgroupoid problem is subtler. Open invariant reductions inherit amenability, but arbitrary Hausdorff ample subgroupoids do not; explicit discrete examples show that even an open subgroupoid may carry a non-amenable isotropy group inside an amenable ambient groupoid.

This failure is explained by a sharp distinction between finite and infinite orbits. A countable transitive discrete groupoid with infinitely many units always admits source-anchored F{\o}lner subspaces: after fixing the source, one can choose a range fiber outside the finite region detected by the prescribed multipliers. We refer to this mechanism as \emph{range escape}. When the unit space is finite, no such escape is possible. The Steinberg algebra is then a finite matrix algebra over an isotropy group algebra, and the F{\o}lner inequality descends to the isotropy group. Consequently, transitive discrete groupoids with finitely many units are algebraically amenable precisely when their isotropy groups are amenable. Combining the finite and infinite orbit cases yields a complete characterization of hereditary source-anchored algebraic amenability for countable discrete groupoids in terms of amenability of all isotropy groups.

The point of the theory is not to replace topological or measurewise amenability, nor to claim an unrestricted equivalence with them. Rather, it isolates a finite-dimensional approximation property intrinsic to Steinberg algebras and shows how the source and range geometry of a groupoid alters the familiar algebraic picture. The examples distinguish source anchoring from plain algebraic amenability of $A_{\Bbbk}(\mathcal G)$, without establishing a general implication between the anchored condition and the analytic notions of groupoid amenability.

Section~\ref{preliminaries} recalls the group-theoretic and groupoid background used throughout the paper. The algebraic F{\o}lner conditions are introduced in Section~\ref{algebraic-folner}, where the one-unit case is related to classical group amenability. Section~\ref{permanence} studies permanence, the failure of inheritance by arbitrary subgroupoids, and the role of isotropy in the discrete setting. Section~\ref{finite-index-transformation-groupoids} characterizes algebraic amenability for compact global actions and develops finite-index induction. The examples in Section~\ref{examples} give explicit approximations and distinguish the weak, source-anchored and hereditary conditions. Section~\ref{future-work} concludes with questions beyond the classes considered here.

\section{Preliminaries}\label{preliminaries}

Throughout this paper, $G$ denotes a countable discrete group with identity element $e_G$. If $X$ is a set, we write $\mathcal P(X)$ for its power set, that is,
$$
\mathcal P(X)=\{A\mid A\subseteq X\}.
$$
For $A\subseteq G$ and $g\in G$, we set
$$
gA=\{ga\mid a\in A\}, \ Ag=\{ag\mid a\in A\}.
$$
Disjoint unions will be denoted by $\sqcup$. The purpose of this section is to recall the classical notions of amenability for discrete groups and to introduce the basic language of ample groupoids and Steinberg algebras. The group case will serve as the one-object model for the groupoid constructions developed later.

\subsection{Amenable groups and finitely additive measures}

\begin{definition}[{\cite[\S 2.1]{Cortez22}}]
Let $X$ be a non-empty set. A \emph{finitely additive probability measure} on $X$ is a map
$$
\mu:\mathcal P(X)\longrightarrow [0,1]
$$
such that the following conditions hold:
\begin{enumerate}
    \item $\mu(X)=1$;
    \item if $A,B\subseteq X$ and $A\cap B=\varnothing$, then
    $$
    \mu(A\sqcup B)=\mu(A)+\mu(B).
    $$
\end{enumerate}
We denote by $\operatorname{PM}(X)$ the set of all finitely additive probability measures on $X$.
\end{definition}

\begin{definition}[{\cite[Definition 3]{Cortez22}; cf. \cite{vNeu29}}]
Let $G$ be a group. A measure $\mu\in\operatorname{PM}(G)$ is called \emph{left $G$-invariant} if
$$
\mu(gA)=\mu(A) \text{ for every }g\in G\text{ and every }A\subseteq G.
$$
Similarly, $\mu$ is called \emph{right $G$-invariant} if
$$
\mu(Ag)=\mu(A) \text{ for every }g\in G\text{ and every }A\subseteq G.
$$
If both conditions hold, then $\mu$ is called \emph{bi-invariant}.
\end{definition}

\begin{definition}[{\cite[Definition 3]{Cortez22}; cf. \cite{vNeu29,Day57}}]
A discrete group $G$ is called \emph{amenable} if there exists a left $G$-invariant finitely additive probability measure on $G$.
\end{definition}

This is one of the classical formulations of amenability. For the historical development of finitely additive invariant measures, see \cite{Ban23,vNeu29,Day57}. It expresses the existence of a translation-invariant averaging procedure on all subsets of the group, but only in the finitely additive sense. Thus, for a countably infinite discrete group, amenability replaces the impossible existence of a countably additive translation-invariant probability measure on $G$ by the existence of a finitely additive invariant probability measure.

\begin{example}[{\cite[Proposition 4]{Cortez22}}]
Every finite group is amenable. Indeed, if $G$ is finite, then
$$
\mu(A)=\frac{|A|}{|G|} \text{ for every }A\subseteq G
$$
defines an element of $\operatorname{PM}(G)$. Since left multiplication by any $g\in G$ is a bijection, one has $|gA|=|A|$, and therefore $\mu(gA)=\mu(A)$.
\end{example}

\subsection{Means on bounded functions}

Let $X$ be a non-empty set. We denote by $\ell^\infty(X)$ the Banach space of all bounded functions $f:X\longrightarrow\mathbb R$, endowed with the norm
$$
\|f\|_\infty=\sup_{x\in X}|f(x)|.
$$
For $A\subseteq X$, we denote by $\mathbf 1_A$ the characteristic function of $A$.

\begin{definition}[{\cite[Definition 15]{Cortez22}; cf. \cite{Fol55,Day57}}]
A \emph{mean} on $X$ is a linear functional
$$
m:\ell^\infty(X)\longrightarrow\mathbb R
$$
such that:
\begin{enumerate}
    \item $m(\mathbf 1_X)=1$;
    \item if $f\in\ell^\infty(X)$ and $f\geq 0$, then $m(f)\geq 0$.
\end{enumerate}
We denote by $\mathcal M(X)$ the set of all means on $X$.
\end{definition}

Every mean has norm one. Moreover, $\mathcal M(X)$ is a convex weak$^*$ compact subset of $\ell^\infty(X)^*$; see \cite[Proposition 16]{Cortez22}. This compact convex structure is one of the standard functional-analytic tools behind fixed-point characterizations of amenability.

For $g\in G$ and $f\in\ell^\infty(G)$, define the left translate of $f$ by
$$
(g\cdot f)(h)=f(g^{-1}h), \ h\in G.
$$
Similarly, define the right translate of $f$ by
$$
(f\cdot g)(h)=f(hg^{-1}), \ h\in G.
$$

\begin{definition}[{\cite[Definition 18]{Cortez22}; cf. \cite{Fol55,Day57}}]
A mean $m\in\mathcal M(G)$ is called \emph{left $G$-invariant} if
$$
m(g\cdot f)=m(f) \text{ for every }g\in G\text{ and every }f\in\ell^\infty(G).
$$
Right invariant and bi-invariant means are defined analogously.
\end{definition}

\begin{proposition}[{\cite[Lemma 17 and Lemma 19]{Cortez22}}]
There is an affine bijection
$$
\mathcal M(X)\longrightarrow\operatorname{PM}(X), \ m\longmapsto\mu_m,
$$
where
$$
\mu_m(A)=m(\mathbf 1_A) \text{ for every }A\subseteq X.
$$
Under this bijection, left $G$-invariant means on $G$ correspond exactly to left $G$-invariant finitely additive probability measures on $G$.
\end{proposition}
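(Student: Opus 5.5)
The plan is to check that $m\mapsto\mu_m$ is well defined, affine and injective, build an explicit inverse by integrating bounded functions against a finitely additive measure, and finally match the invariance conditions under this correspondence.

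\emph{Well-definedness and injectivity.} Given $m\in\mathcal M(X)$, we have $\mu_m(X)=m(\mathbf 1_X)=1$. Positivity of $m$ gives $\mu_m(A)\ge 0$, and $m(\mathbf 1_X-\mathbf 1_A)\ge 0$ gives $\mu_m(A)\le 1$. For disjoint $A,B$ we have $\mathbf 1_{A\sqcup B}=\mathbf 1_A+\mathbf 1_B$, so linearity of $m$ yields finite additivity. The map is affine because evaluation at $\mathbf 1_A$ is affine in $m$. For injectivity, note that simple functions $\sum_i c_i\mathbf 1_{A_i}$ are uniformly dense in $\ell^\infty(X)$: for $\varepsilon>0$, partition the range of $f$ into intervals of length $\varepsilon$. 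A mean is positive with $m(\mathbf 1_X)=1$, so $|m(f)|\le m(\|f\|_\infty\mathbf 1_X)=\|f\|_\infty$, and $m$ is norm-continuous. Hence $m$ is determined by its values on characteristic functions, that is, by $\mu_m$.

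\emph{Surjectivity, the main step.} Given $\mu\in\operatorname{PM}(X)$, define $m$ on simple functions by
$$
m\Bigl(\sum_i c_i\mathbf 1_{A_i}\Bigr)=\sum_i c_i\mu(A_i).
$$
I must first check that this is independent of the representation. Pass to a common refinement into disjoint atoms and use finite additivity, exactly as in the Riemann/Lebesgue integral construction. The resulting functional is then linear and positive on simple functions, and satisfies $|m(s)|\le\|s\|_\infty$. It therefore extends uniquely by continuity to all of $\ell^\infty(X)$. Positivity survives the extension: if $f\ge 0$, approximate $f$ from below within $\varepsilon$ by a nonnegative simple function, for instance $\lfloor f/\varepsilon\rfloor\varepsilon$. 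Then $\mu_m=\mu$ by construction. I expect this well-definedness and extension step to be the only real obstacle, and it is routine.

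\emph{Invariance.} Using $\mathbf 1_{gA}=g\cdot\mathbf 1_A$, we get $\mu_m(gA)=m(g\cdot\mathbf 1_A)$. So a left invariant $m$ gives a left invariant $\mu_m$. Conversely, if $\mu_m$ is left invariant, then the functionals $f\mapsto m(g\cdot f)$ and $m$ agree on characteristic functions. Both are means, since translation preserves positivity and fixes $\mathbf 1_G$. By the injectivity argument they coincide. The same argument applies verbatim to right translates, using $\mathbf 1_{Ag}=\mathbf 1_A\cdot g$.
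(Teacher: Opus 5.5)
Your proof is correct and complete. Density of finitely-valued functions, together with the bound $|m(f)|\le\|f\|_\infty$ coming from positivity and $m(\mathbf 1_X)=1$, gives both injectivity and the continuous extension you need for surjectivity, and the transfer of invariance via $\mathbf 1_{gA}=g\cdot\mathbf 1_A$ plus injectivity is right. The paper gives no proof of this statement, only the citation \cite[Lemma 17 and Lemma 19]{Cortez22}, and your argument is the standard one for this fact.
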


\begin{proposition}[{\cite[Theorem 20]{Cortez22}; cf. \cite{Day57,Gre69,Pat88}}]
For a discrete group $G$, the following conditions are equivalent:
\begin{enumerate}
    \item $G$ is amenable;
    \item there exists a left $G$-invariant mean on $\ell^\infty(G)$;
    \item there exists a right $G$-invariant mean on $\ell^\infty(G)$;
    \item there exists a bi-invariant mean on $\ell^\infty(G)$;
    \item there exists a left $G$-invariant finitely additive probability measure on $G$;
    \item there exists a bi-invariant finitely additive probability measure on $G$.
\end{enumerate}
\end{proposition}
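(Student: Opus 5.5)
The natural route is to arrange the six conditions in a cycle of implications, using the affine bijection $m\mapsto\mu_m$ of the preceding proposition to move between means and finitely additive measures. Under that bijection, (1), (2) and (5) are equivalent immediately: (1) is (5) by definition, and left invariance corresponds on both sides once one checks that $g\cdot\mathbf 1_A=\mathbf 1_{gA}$, since $\mathbf 1_A(g^{-1}h)=1$ exactly when $h\in gA$. In the same way, (4) and (6) are equivalent, because right invariance also matches under the bijection via $\mathbf 1_A\cdot g=\mathbf 1_{Ag}$. The trivial implications (4)$\Rightarrow$(2) and (4)$\Rightarrow$(3) remain, so the real work is (2)$\Leftrightarrow$(3) and (2)$\Rightarrow$(4).

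For (2)$\Leftrightarrow$(3), I would use the inversion $\check f(h)=f(h^{-1})$. Given a left invariant mean $m$, I would define $m'(f)=m(\check f)$. Then $m'$ is still a mean, since inversion preserves positivity and fixes $\mathbf 1_G$. Moreover
$$
(f\cdot g)^\vee(h)=f(h^{-1}g^{-1})=\check f(gh)=(g^{-1}\cdot\check f)(h),
$$
so $m'(f\cdot g)=m(g^{-1}\cdot\check f)=m(\check f)=m'(f)$, and $m'$ is right invariant. The converse direction is symmetric.

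For (2)$\Rightarrow$(4), let $m_L$ be a left invariant mean and $m_R$ a right invariant mean, the latter obtained from the previous step. Given $f\in\ell^\infty(G)$, set $\phi_f(x)=m_L(f\cdot x)$ and define $m(f)=m_R(\phi_f)$.
\begin{itemize}
\item Well-definedness: $|\phi_f(x)|\le\|f\|_\infty$ because means have norm one, so $\phi_f\in\ell^\infty(G)$.
\item Mean property: linearity and positivity are clear, and $\phi_{\mathbf 1_G}=\mathbf 1_G$, so $m(\mathbf 1_G)=1$.
\item Left invariance: left and right translations commute, so $(g\cdot f)\cdot x=g\cdot(f\cdot x)$. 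Hence $\phi_{g\cdot f}=\phi_f$ by the left invariance of $m_L$.
\item Right invariance: $(f\cdot g)\cdot x=f\cdot(xg)$, so $\phi_{f\cdot g}(x)=\phi_f(xg)$, i.e. $\phi_{f\cdot g}=\phi_f\cdot g^{-1}$. Right invariance of $m_R$ then gives $m(f\cdot g)=m(f)$.
\end{itemize}

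The main obstacle is this last step. The delicate point is getting the translation conventions exactly right, so that the auxiliary function $\phi_f$ transforms as a right translate when $f$ is right translated and is untouched by left translation. I would verify the identities $(f\cdot g)\cdot x=f\cdot(xg)$ and the commutation of left with right translation directly from the formulas $(g\cdot f)(h)=f(g^{-1}h)$ and $(f\cdot g)(h)=f(hg^{-1})$ before assembling the cycle.
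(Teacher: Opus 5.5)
Your proposal has a genuine gap in the right-invariance step of (2)$\Rightarrow$(4). It sits exactly at the convention you flagged, and the identity you intend to verify is false. With $(f\cdot g)(h)=f(hg^{-1})$,
\[
\bigl((f\cdot g)\cdot x\bigr)(h)=(f\cdot g)(hx^{-1})=f(hx^{-1}g^{-1})=\bigl(f\cdot(gx)\bigr)(h),
\]
so $(f\cdot g)\cdot x=f\cdot(gx)$, not $f\cdot(xg)$. Under the paper's convention, right translation is a right action. Hence $\phi_{f\cdot g}(x)=\phi_f(gx)$, i.e.\ $\phi_{f\cdot g}=g^{-1}\cdot\phi_f$ is a \emph{left} translate of $\phi_f$. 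Right invariance of $m_R$ says nothing about $m_R(g^{-1}\cdot\phi_f)$. Your left-invariance step is fine.

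This is a real failure, not only a faulty justification. Take the lamplighter group $(\mathbb Z/2)\wr\mathbb Z$, with product $(\omega,k)(\omega',k')=(\omega+\sigma^k\omega',k+k')$, where $(\sigma^k\omega')(i)=\omega'(i-k)$.
Let $m_L$ be a weak$^*$ cluster point of the uniform averages over the left F{\o}lner sets $F_n=\{(\eta,j)\mid 0\le j\le n,\ \operatorname{supp}\eta\subseteq[j-2n,j]\cup\{j+2\}\}$. Let $m_R(\psi)=m_L(\check\psi)$ be the right invariant mean produced by your inversion step.
Then $\check\phi_f(y)=\phi_f(y^{-1})=m_L(h\mapsto f(hy))$. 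For $f_c(\omega,k)=\omega(k+c)$, elements of $F_n$ have no lit lamp at distance $\ge 3$ to the right of the lamplighter. So $m_L(h\mapsto f_c(hy))=f_c(y)$ whenever $y=(\eta,j)$ has $j\ge 2$, and this set has full $m_L$-measure. Thus $m_R(\phi_{f_c})=m_L(f_c)$ for $c=1,2$.
But $f_1\cdot t^{-1}=f_2$ for $t=(0,1)$, while $m_L(f_1)=0$ and $m_L(f_2)=\tfrac12$. So $f\mapsto m_R(\phi_f)$ is left invariant but not right invariant.

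The repair is easy. Keep $\phi_f(x)=m_L(f\cdot x)$ but use $m_L$ again as the outer mean, $m(f)=m_L(\phi_f)$. Then $\phi_{g\cdot f}=\phi_f$ gives left invariance, and $\phi_{f\cdot g}=g^{-1}\cdot\phi_f$ together with left invariance of the outer $m_L$ gives right invariance. In this form (2)$\Rightarrow$(4) does not even need (3). Alternatively, keep $m_R$ outside and set $\phi_f(x)=m_L(f\cdot x^{-1})$, for which $\phi_{f\cdot g}=\phi_f\cdot g$.

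The rest of your cycle is correct:
\begin{itemize}
\item the cited bijection gives (1)$\Leftrightarrow$(2)$\Leftrightarrow$(5);
\item the inversion computation gives (2)$\Leftrightarrow$(3);
\item (4) trivially implies (2) and (3);
\item (4)$\Rightarrow$(6) follows from $g\cdot\mathbf 1_A=\mathbf 1_{gA}$ and $\mathbf 1_A\cdot g=\mathbf 1_{Ag}$.
\end{itemize}
For the return from (6), just note that a bi-invariant measure is left invariant, so (6)$\Rightarrow$(5). This avoids transporting right invariance back through the bijection, which would need density of simple functions in $\ell^\infty(G)$.

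The paper gives no proof of this proposition and only quotes it from the literature, so there is no separate route there to compare with.
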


\subsection{F{\o}lner sets}

Amenability also admits a finitary combinatorial formulation in terms of almost invariant finite subsets. This viewpoint is especially useful for the algebraic approach developed later, since it allows invariant means to be approximated by finite averaging procedures.

\begin{definition}[{\cite[p. 245, Main Theorem]{Fol55}; cf. \cite[\S 4]{Cortez22}}]
A countable discrete group $G$ satisfies the \emph{left F{\o}lner condition} if, for every finite subset $K\subseteq G$ and every $\varepsilon>0$, there exists a non-empty finite subset $F\subseteq G$ such that
$$
\frac{|F\setminus kF|}{|F|}<\varepsilon \text{ for every }k\in K.
$$
\end{definition}

Equivalently, $G$ admits a sequence $(F_n)_{n\geq 1}$ of non-empty finite subsets of $G$ such that
$$
\lim_{n\to\infty}\frac{|F_n\triangle gF_n|}{|F_n|}=0 \text{ for every }g\in G,
$$
where $\triangle$ denotes symmetric difference. Passing to inverses shows that the left and right versions of the F{\o}lner condition are equivalent.

\begin{proposition}[F{\o}lner criterion, {\cite[p. 245, Main Theorem]{Fol55}; cf. \cite[Proposition 33 and Theorem 36]{Cortez22}}]
For a countable discrete group $G$, the following conditions are equivalent:
\begin{enumerate}
    \item $G$ is amenable;
    \item $G$ satisfies the left F{\o}lner condition;
    \item $G$ satisfies the right F{\o}lner condition;
    \item $G$ admits a left F{\o}lner sequence;
    \item $G$ admits a right F{\o}lner sequence.
\end{enumerate}
\end{proposition}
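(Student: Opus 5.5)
We prove the cycle (1)$\Rightarrow$(2)$\Rightarrow$(4)$\Rightarrow$(1), and then attach the right-handed versions (3) and (5) through the inversion map $g\mapsto g^{-1}$. The definition of amenability gives a left $G$-invariant finitely additive probability measure $\mu$. By the affine bijection $m\mapsto\mu_m$ of the earlier proposition, and by the listed equivalences with left invariant means, this is the same as a left invariant mean $m$ on $\ell^\infty(G)$. So the task is to pass between invariant means and finite almost invariant sets.

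The implication (2)$\Rightarrow$(4) is routine, using countability. Enumerate $G=\{g_1,g_2,\dots\}$ and apply the left F{\o}lner condition to $K_n=\{g_1,\dots,g_n\}\cup\{g_1^{-1},\dots,g_n^{-1}\}$ with $\varepsilon=1/n$. This produces sets $F_n$. Since left translation is a bijection, $|kF|=|F|$, so $|F\setminus kF|=|kF\setminus F|$. Moreover $|kF\setminus F|=|F\setminus k^{-1}F|$, which gives $|F\triangle kF|\le 2|F\setminus kF|$. Hence $|F_n\triangle gF_n|/|F_n|\to0$ for each fixed $g$.

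For (4)$\Rightarrow$(1), define $\phi_n(f)=|F_n|^{-1}\sum_{x\in F_n}f(x)$. These are means, so by weak$^*$ compactness of $\mathcal M(G)$ they have a cluster point $m$. We have
$$|\phi_n(g\cdot f)-\phi_n(f)|\le\|f\|_\infty\,|gF_n\triangle F_n|/|F_n|\to0,$$
so $m$ is left invariant, and $\mu_m$ is the required invariant measure.

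The main obstacle is (1)$\Rightarrow$(2), which is F{\o}lner's theorem proper. My plan follows Namioka's argument.
\begin{enumerate}
\item Approximate $m$ in the weak$^*$ topology by finitely supported probability densities $\phi\in\ell^1(G)$; these are weak$^*$ dense in $\mathcal M(G)$.
\item Invariance of $m$ says $g\cdot\phi-\phi\to0$ weakly in $\ell^1(G)^{K}$. Since the weak and norm closures of the convex set of such densities coincide (Hahn--Banach/Mazur), we can take convex combinations and obtain $\phi$ with $\sum_{k\in K}\|k\cdot\phi-\phi\|_1<\varepsilon$.
\item Write $\phi=\int_0^\infty\mathbf 1_{\{\phi>t\}}\,dt$ as a layer-cake decomposition. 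The level sets $E_t=\{\phi>t\}$ satisfy
$$\|k\cdot\phi-\phi\|_1=\int_0^\infty|kE_t\triangle E_t|\,dt \quad\text{and}\quad \int_0^\infty|E_t|\,dt=1.$$
\item It follows that some level set $F=E_t$ satisfies $\sum_{k\in K}|kF\triangle F|<\varepsilon|F|$. Since $|F\setminus kF|\le|F\triangle kF|$, this gives (2).
\end{enumerate}
The delicate points are the convexity step (ii), which passes from weak to norm convergence, and checking the layer-cake identity.

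Finally, (2)$\Leftrightarrow$(3) and (4)$\Leftrightarrow$(5) follow by taking inverses. Indeed, $(F\setminus kF)^{-1}=F^{-1}\setminus F^{-1}k^{-1}$, and replacing $K$ by $K^{-1}$ converts left F{\o}lner data into right F{\o}lner data and back.
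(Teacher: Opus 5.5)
Your proof is correct and complete. The paper gives no proof of its own here: it cites F{\o}lner's Main Theorem and Cortez's notes. The surrounding text sketches only two pieces. First, inversion $F\mapsto F^{-1}$ exchanges the left and right conditions, which is your final paragraph. Second, the averaging means $m_n$ of a F{\o}lner sequence have a weak$^*$ cluster point that is left invariant, which is your (4)$\Rightarrow$(1).

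For the substantive direction (1)$\Rightarrow$(2), the paper simply defers to the literature. You instead supply Namioka's functional-analytic argument: weak$^*$ density of finitely supported densities, then Mazur's lemma to turn weak smallness into norm smallness, then the layer-cake identity to extract a good level set. The paper cites the same Theorem~36 of Cortez for both this criterion and the Tarski--F{\o}lner theorem, which suggests the classical combinatorial route through paradoxical decompositions. Namioka's route is shorter and needs no matching argument. The combinatorial route yields the non-paradoxicality characterization at the same time.

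Two small points of precision:
\begin{itemize}
\item In step (ii), Mazur's lemma should be applied to the convex set $\{(k\cdot\phi-\phi)_{k\in K}\}\subseteq\ell^1(G)^K$, with $\phi$ ranging over finitely supported probability densities. It is not applied to the set of densities itself. Translation on densities must be $(k\cdot\phi)(x)=\phi(k^{-1}x)$, so that $\{k\cdot\phi>t\}=kE_t$ and $k\cdot\phi_\alpha-\phi_\alpha\to 0$ weakly in $\ell^1(G)$ follows from $m(k\cdot f)=m(f)$.
\item In step (iv), the strict inequality automatically forces $E_t\neq\varnothing$. It is worth saying so, since condition (2) requires a non-empty $F$.
\end{itemize}
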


If $(F_n)_{n\geq 1}$ is a left F{\o}lner sequence, then each $F_n$ defines a mean
$$
m_n(f)=\frac{1}{|F_n|}\sum_{h\in F_n}f(h), \ f\in\ell^\infty(G).
$$
The F{\o}lner condition implies that $(m_n)_{n\geq 1}$ is asymptotically invariant. By weak$^*$ compactness of $\mathcal M(G)$, the sequence admits a convergent subnet, and the limit of every such subnet is a left $G$-invariant mean.

\begin{example}[{\cite[Example 35 and Corollary 40]{Cortez22}}]
The group $\mathbb Z^d$ is amenable. Indeed, the sequence
$$
F_n=\{0,1,\dots,n-1\}^d, \ n\geq 1,
$$
is a F{\o}lner sequence. Consequently, every finitely generated abelian group is amenable, and permanence under directed unions implies that every abelian group is amenable.
\end{example}

\subsection{Paradoxical decompositions}

Amenability can also be characterized by the impossibility of duplicating a group by finitely many translations. This is the content of the Tarski--F{\o}lner theorem.

\begin{definition}[{\cite[\S 2.2]{Cortez22}; cf. \cite{BanTar24,Tar38}}]
A group $G$ is said to be \emph{paradoxical} if there exist pairwise disjoint subsets
$$
A_1,\dots,A_n,B_1,\dots,B_m\subseteq G
$$
and elements
$$
g_1,\dots,g_n,h_1,\dots,h_m\in G
$$
such that
$$
G=\bigsqcup_{i=1}^n g_iA_i=\bigsqcup_{j=1}^m h_jB_j.
$$
In this case, we say that $G$ admits a \emph{paradoxical decomposition}.
\end{definition}

\begin{example}[{\cite[Theorem 5]{Cortez22}}]
The free group $F_2=\langle a,b\rangle$ is not amenable. Indeed, by decomposing reduced words according to their first letter, one obtains a paradoxical decomposition of $F_2$. This is the classical obstruction behind the non-amenability of free groups of rank at least two.
\end{example}

\begin{theorem}[Tarski--F{\o}lner theorem, {\cite{Tar38,Fol55}; cf. \cite[Theorem 36]{Cortez22}}]
For a countable discrete group $G$, the following conditions are equivalent:
\begin{enumerate}
    \item $G$ is amenable;
    \item $G$ satisfies the F{\o}lner condition;
    \item $G$ does not admit a paradoxical decomposition.
\end{enumerate}
\end{theorem}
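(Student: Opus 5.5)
The plan is to prove the cycle of implications (1)$\Rightarrow$(3)$\Rightarrow$(2)$\Rightarrow$(1), using the Følner criterion stated above to link the Følner condition with invariant means. Since the paper takes (1)$\Leftrightarrow$(2) from that criterion, the substantive work is connecting paradoxicality to both.

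For (1)$\Rightarrow$(3), I will argue by contradiction. Suppose $\mu$ is a left invariant finitely additive probability measure and $G=\bigsqcup_i g_iA_i=\bigsqcup_j h_jB_j$ with the $A_i,B_j$ pairwise disjoint. Left invariance and finite additivity give
$$
1=\sum_i\mu(g_iA_i)=\sum_i\mu(A_i)
$$
and similarly $1=\sum_j\mu(B_j)$. Disjointness then yields
$$
2=\sum_i\mu(A_i)+\sum_j\mu(B_j)=\mu\Bigl(\bigsqcup_i A_i\sqcup\bigsqcup_j B_j\Bigr)\le 1,
$$
which is a contradiction. So an amenable group admits no paradoxical decomposition.

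The direction (3)$\Rightarrow$(2) is the main obstacle. I will prove the contrapositive: if the Følner condition fails, then $G$ is paradoxical. Failure of the Følner condition yields a finite symmetric set $K\ni e_G$ and a constant $\varepsilon>0$ with $|KF|\ge(1+\varepsilon)|F|$ for all finite non-empty $F$. Iterating, there is an $n$ such that $|K^nF|\ge 2|F|$ for every finite $F$. I then form the bipartite graph with two copies of $G$ on one side, where each $g$ is joined to the elements of $K^ng$ on the other side. The expansion inequality is exactly Hall's condition for finite sets. Hall's marriage theorem, extended to infinite locally finite graphs by a compactness (König) argument, gives a $(2,1)$-matching: a map $\phi:G\to G$ that is exactly two-to-one and satisfies $\phi(x)\in K^n x$. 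Each $x$ then lies in $\phi^{-1}(g)$ for a unique $g$, and I label the two preimages of $g$ as first and second. Partitioning the first and second preimages according to which $k\in K^n$ satisfies $x=kg$ gives finitely many pieces $A_k,B_k$. The inverse translates $k^{-1}A_k$ and $k^{-1}B_k$ each tile $G$, which produces a paradoxical decomposition (after the standard relabeling of the $k$'s). Care is needed in the infinite Hall step and in arranging the pieces to be disjoint translates.

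Finally, (2)$\Rightarrow$(1) follows from the averaging construction already described after the Følner criterion. The means $m_n$ attached to a Følner sequence are asymptotically invariant, and any weak$^*$ cluster point, which exists by compactness of $\mathcal M(G)$, is a left invariant mean. The bijection $\mathcal M(G)\to\operatorname{PM}(G)$ stated above then converts this mean into a left invariant finitely additive measure, which closes the cycle.
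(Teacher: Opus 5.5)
The paper does not prove this theorem. It quotes it from Tarski and F{\o}lner (via Cortez's notes) as background, so there is no in-paper argument to compare against. Your cycle $(1)\Rightarrow(3)\Rightarrow(2)\Rightarrow(1)$ is correct, and it is the standard modern proof through Hall's theorem.

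For comparison, Tarski's original route proves $(3)\Rightarrow(1)$ directly: it builds an invariant measure from the type semigroup of equidecomposability classes and never produces F{\o}lner sets. Your route passes through the F{\o}lner condition instead. Given Hall's theorem it is shorter, and it yields explicit paradoxical decompositions whose translating elements lie in the finite set $K^n$.

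The reduction to $|KF|\geq(1+\varepsilon)|F|$ deserves one line. If $K_0$ and $\varepsilon$ witness the failure of the F{\o}lner condition, use $|F\setminus kF|=|kF\setminus F|$ and take $K=K_0\cup K_0^{-1}\cup\{e_G\}$.

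There is one genuine imprecision. The infinite Hall theorem on your graph only gives a matching saturating the side $G\times\{1,2\}$, that is, an injection $\psi\colon G\times\{1,2\}\to G$ with $\psi(g,i)\in K^ng$. It does not give a total, exactly two-to-one map $\varphi\colon G\to G$, so the claim that every $x$ lies in some $\varphi^{-1}(g)$ is not justified as stated.

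That claim is not needed. For $k\in K^n$ put $A_k=\{\psi(g,1)\mid \psi(g,1)=kg\}$ and $B_k=\{\psi(g,2)\mid \psi(g,2)=kg\}$. Injectivity of $\psi$ makes all these sets pairwise disjoint. For each $g$ and $i$ the label $k=\psi(g,i)g^{-1}$ is uniquely determined. Hence $G=\bigsqcup_k k^{-1}A_k=\bigsqcup_k k^{-1}B_k$. This is exactly the paper's definition of a paradoxical decomposition, which does not require the $A_i,B_j$ to cover $G$. If you want the two-to-one map anyway, check Hall's condition on the other side too and invoke the Schr\"oder--Bernstein theorem for matchings.
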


This theorem provides a fundamental bridge between invariant means, finite approximations and algebraic decompositions.

\subsection{Permanence properties}

Amenability is stable under several natural group-theoretic constructions.

\begin{proposition}[{\cite[Propositions 7, 13, 21 and 26 and Corollaries 22 and 27]{Cortez22}; cf. \cite{Pat88}}]
Let $G$ be a discrete group. The following assertions hold:
\begin{enumerate}
    \item if $G$ is amenable and $H\leq G$, then $H$ is amenable;
    \item if $G$ is amenable and $N\trianglelefteq G$, then $G/N$ is amenable;
    \item if $N\trianglelefteq G$ and both $N$ and $G/N$ are amenable, then $G$ is amenable;
    \item finite direct products of amenable groups are amenable;
    \item directed unions of amenable groups are amenable;
    \item if $G$ is countable, then $G$ is amenable if and only if every finitely generated subgroup of $G$ is amenable.
\end{enumerate}
\end{proposition}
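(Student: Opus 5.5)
The statement has six assertions. I would prove them through the F{\o}lner criterion and the invariant-mean formulation recalled above, choosing for each item whichever formulation makes it transparent.

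For (1), take a left-invariant mean $m$ on $G$ and a set $T$ of right coset representatives, so $G=\bigsqcup_{t\in T}Ht$. Identify $\ell^\infty(H)$ with functions on $G$ constant along the $T$-coordinate via $\tilde f(ht)=f(h)$. Define $m_H(f)=m(\tilde f)$. For $h'\in H$ we have $\widetilde{h'\cdot f}=h'\cdot\tilde f$, because left multiplication by $h'$ preserves each coset $Ht$. Hence $m_H$ is a left $H$-invariant mean.

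For (2), push forward along the quotient map $\pi$. Set $m_{G/N}(f)=m(f\circ\pi)$ and observe that $(gN)\cdot f\circ\pi=g\cdot(f\circ\pi)$.

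For (3), take invariant means $m_N$ and $m_Q$, where $Q=G/N$. Given $f\in\ell^\infty(G)$, define $\phi_f(gN)=m_N(h\mapsto f(gh))$. This is well defined because $m_N$ is left $N$-invariant: replacing $g$ by $gn$ translates the function by $n$. Put $m(f)=m_Q(\phi_f)$. Left $G$-invariance follows from $\phi_{g\cdot f}=gN\cdot\phi_f$. Positivity and normalization are immediate.

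For (4), it suffices to treat two factors, since the case of several factors follows by iterating. Note that $G_2$ is a normal subgroup of $G_1\times G_2$ with quotient $G_1$, so (3) applies. Alternatively, products of F{\o}lner sets $F_1\times F_2$ are F{\o}lner, via $|F\setminus kF|\le |F_1\setminus k_1F_1||F_2|+|F_1||F_2\setminus k_2F_2|$.

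For (5), use the F{\o}lner condition directly. A finite set $K$ lies in some member $G_i$ of the directed family. A F{\o}lner set for $K$ and $\varepsilon$ inside the amenable group $G_i$ is also one in $G$, because the condition only involves $F$ and $kF$ for $k\in K$.

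Finally, (6) follows from (1) and (5). The forward direction is (1). For the converse, $G$ is the directed union of its finitely generated subgroups, which is directed because two finitely generated subgroups generate a finitely generated subgroup. Here I would use the F{\o}lner criterion, which is stated for countable groups, so countability is needed. The directed-union argument also requires the subgroups to satisfy the F{\o}lner condition. They are countable, so the criterion applies to them too.

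The main obstacle is (3). One has to check that $\phi_f$ is well defined on cosets, that it is bounded by $\|f\|_\infty$, and that the translation identity $\phi_{g\cdot f}=gN\cdot\phi_f$ holds with the paper's convention $(g\cdot f)(h)=f(g^{-1}h)$. I would verify the identity by computing
$$\phi_{g\cdot f}(xN)=m_N\bigl(h\mapsto f(g^{-1}xh)\bigr)=\phi_f(g^{-1}xN).$$
The remaining items are routine.
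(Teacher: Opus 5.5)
The paper gives no proof of this proposition. It is quoted from \cite{Cortez22}, so there is no internal argument to match yours against. Your arguments for (1)--(4) are the standard invariant-mean proofs and are correct as written. In (1), the identity $\widetilde{h'\cdot f}=h'\cdot\tilde f$ holds because $h'^{-1}ht$ has coset decomposition $(h'^{-1}h)t$. Your verification of $\phi_{g\cdot f}=(gN)\cdot\phi_f$ in (3) is right for the convention $(g\cdot f)(h)=f(g^{-1}h)$. Your well-definedness check also goes through: replacing $g$ by $gn$ turns $h\mapsto f(gh)$ into its translate by $n^{-1}$.

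The one place where your route needs a supplement is (5). The proposition is stated for an arbitrary discrete group, and (5) carries no countability hypothesis. The F{\o}lner criterion recalled in the paper, however, is stated only for countable groups. You use it twice: to get F{\o}lner sets in $G_i$, and to conclude that $G$ is amenable. As written, your argument is therefore complete only when $G$ is countable, which is all that (6) requires. The same caveat applies to your alternative F{\o}lner argument in (4), but your primary route through (3) is unaffected.

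For the general case of (5), you can cite F{\o}lner's theorem for arbitrary discrete groups. Alternatively, replace the F{\o}lner step by a compactness argument with means:
\begin{enumerate}
\item Let $M_i$ be the set of left $G_i$-invariant means on $G$. Composing a left $G_i$-invariant mean on $G_i$ with the restriction map $f\mapsto f|_{G_i}$ shows that $M_i$ is non-empty.
\item Each $M_i$ is weak$^*$-closed in $\mathcal M(G)$.
\item Directedness gives $M_j\subseteq M_{i_1}\cap\dots\cap M_{i_k}$ whenever $G_{i_1},\dots,G_{i_k}\subseteq G_j$, so the family has the finite intersection property.
\item Weak$^*$ compactness of $\mathcal M(G)$ then yields a mean in $\bigcap_i M_i$. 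This mean is left $G$-invariant because $G=\bigcup_i G_i$.
\end{enumerate}
This version needs no cardinality assumption. It also shows that the countability hypothesis in (6) is only needed to invoke the countable F{\o}lner criterion, not for the truth of the statement.
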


\begin{corollary}
Finite groups, abelian groups, locally finite groups and solvable groups are amenable.
\end{corollary}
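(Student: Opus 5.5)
The corollary follows from the preceding permanence proposition together with the two basic examples already recorded: finite groups are amenable (Example on finite groups) and abelian groups are amenable (Example on $\mathbb Z^d$). I would treat the four classes in turn, each time reducing to these base cases by one of items (1)--(6).

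\emph{Finite and abelian groups.} Finite groups are covered directly by the normalized counting measure $\mu(A)=|A|/|G|$. For abelian groups, the $\mathbb Z^d$ example gives amenability of $\mathbb Z^d$. A finitely generated abelian group is isomorphic to $\mathbb Z^d\times T$ with $T$ finite, so it is amenable by item (4) on finite products, or alternatively as a quotient of some $\mathbb Z^d$ by item (2). An arbitrary abelian group is the directed union of its finitely generated subgroups, so item (5) finishes this case.

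\emph{Locally finite groups.} Every finitely generated subgroup of a locally finite group is finite, hence amenable. A locally finite group is the directed union of its finitely generated subgroups, so item (5) applies; in the countable case item (6) gives the same conclusion.

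\emph{Solvable groups.} Take the derived series $G=G^{(0)}\trianglerighteq G^{(1)}\trianglerighteq\cdots\trianglerighteq G^{(n)}=\{e_G\}$ and induct on the derived length $n$. The base $n=0$ is the trivial group, which is finite. For the inductive step, $G^{(1)}$ is solvable of length $n-1$ and hence amenable by induction. The quotient $G/G^{(1)}$ is abelian and hence amenable. Item (3) then shows that $G$ is amenable.

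The only point needing care is that items (5) and (6) are stated for directed unions and for countable groups, while the paper's standing convention is that $G$ is countable. Under that convention every step above is direct, so I expect no real obstacle. The one spot to check is the abelian reduction: it needs the structure theorem for finitely generated abelian groups, or more simply the observation that such a group is a quotient of some $\mathbb Z^d$, so that item (2) applies.
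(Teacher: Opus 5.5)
Your proposal is correct and follows essentially the same route as the paper. The paper also uses the counting measure for finite groups, the decomposition $\mathbb Z^d\times F$ plus directed unions for abelian groups, finiteness of finitely generated subgroups for locally finite groups, and extensions for solvable groups; your derived-series induction simply spells out the paper's phrase ``finitely many extensions.''
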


\begin{proof}
Finite groups are amenable by the normalized counting measure. Every finitely generated abelian group is isomorphic to $\mathbb Z^d\times F$ for some $d\geq 0$ and some finite abelian group $F$, and is therefore amenable. Permanence under directed unions then shows that every abelian group is amenable. Every finitely generated subgroup of a locally finite group is finite, so local finiteness implies amenability. Finally, solvable groups are obtained from abelian groups by finitely many extensions, and amenability is preserved under extensions.
\end{proof}

\subsection{Dynamical characterization}

\begin{definition}[{\cite[\S 1.1]{Cortez22}; cf. \cite{Gre69}}]
Let $G$ be a group and let $X$ be a non-empty set. A \emph{left action} of $G$ on $X$ is a map
$$
G\times X\longrightarrow X, \ (g,x)\longmapsto gx,
$$
such that
$$
e_Gx=x \text{ and } g(hx)=(gh)x
$$
for every $g,h\in G$ and every $x\in X$. If $X$ is a topological space and every map $x\longmapsto gx$ is a homeomorphism, then the action is called a \emph{topological action}. Since $G$ is discrete, this is equivalent to continuity of the action map $G\times X\longrightarrow X$.
\end{definition}

\begin{definition}[{\cite{Bogachev07}; cf. \cite[\S 1.1]{Cortez22}}]
Let $X$ be a topological space. The \emph{Borel $\sigma$-algebra} of $X$, denoted by $\mathcal B(X)$, is the smallest $\sigma$-algebra of subsets of $X$ containing every open subset of $X$. The elements of $\mathcal B(X)$ are called the \emph{Borel subsets} of $X$.

A \emph{Borel probability measure} on $X$ is a map
$$
\nu:\mathcal B(X)\longrightarrow [0,1]
$$
such that:
\begin{enumerate}
    \item $\nu(X)=1$;
    \item for every sequence $(A_n)_{n\geq 1}$ of pairwise disjoint Borel subsets of $X$, one has
    $$
    \nu\left(\bigsqcup_{n=1}^{\infty}A_n\right)=\sum_{n=1}^{\infty}\nu(A_n).
    $$
\end{enumerate}
We denote by $\operatorname{Prob}(X)$ the set of all Borel probability measures on $X$.
\end{definition}

\begin{definition}[{\cite[\S 1.1]{Cortez22};
cf. \cite{Gre69}}]
Let $G\curvearrowright X$ be a topological action on a compact Hausdorff space. A Borel probability measure $\nu\in\operatorname{Prob}(X)$ is called \emph{$G$-invariant} if
$$
\nu(gA)=\nu(A)
$$
for every $g\in G$ and every $A\in\mathcal B(X)$.
\end{definition}

\begin{theorem}[{\cite[Theorem 41]{Cortez22}; cf. \cite{Day61,Gre69,Pat88}}]
For a countable discrete group $G$, the following conditions are equivalent:
\begin{enumerate}
    \item $G$ is amenable;
    \item every continuous action of $G$ on a non-empty compact Hausdorff space admits a $G$-invariant Borel probability measure;
    \item every continuous action of $G$ on a non-empty compact metrizable space admits a $G$-invariant Borel probability measure.
\end{enumerate}
\end{theorem}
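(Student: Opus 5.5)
The plan is to prove the cycle (1)$\Rightarrow$(2)$\Rightarrow$(3)$\Rightarrow$(1). The implication (2)$\Rightarrow$(3) is immediate, since a compact metrizable space is in particular compact Hausdorff.

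For (1)$\Rightarrow$(2), let $G\curvearrowright X$ be a continuous action on a non-empty compact Hausdorff space, and fix $x_0\in X$. By the preceding proposition, amenability gives a left $G$-invariant mean $m$ on $\ell^\infty(G)$. For $\varphi\in C(X)$ (real-valued), define $\varphi_{x_0}(g)=\varphi(g x_0)$, a bounded function on $G$. Set
$$
\Lambda(\varphi)=m(\psi),\qquad \psi(g)=\varphi(g^{-1}x_0).
$$
The inversion is chosen so that left invariance of $m$ matches the action: $(h\cdot\psi)(g)=\psi(h^{-1}g)=\varphi(g^{-1}hx_0)$. This shows $\Lambda(\varphi\circ h)=\Lambda(\varphi)$ for every $h\in G$. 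The functional $\Lambda$ is linear, positive and satisfies $\Lambda(1)=1$. The Riesz representation theorem then gives a regular Borel probability measure $\nu$ with $\int\varphi\,d\nu=\Lambda(\varphi)$. Invariance of $\Lambda$ under $\varphi\mapsto\varphi\circ h$ says $h_*\nu$ and $\nu$ integrate continuous functions identically. By uniqueness in Riesz (both measures are regular, since $h$ is a homeomorphism), $h_*\nu=\nu$, that is, $\nu(gA)=\nu(A)$ for all Borel $A$.

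The main step is (3)$\Rightarrow$(1), and here I would use the F{\o}lner criterion or its contrapositive. My first choice is a universal-type compact metrizable space: $X=[0,1]^G$ with the product topology and the shift action $(g\cdot\omega)(h)=\omega(g^{-1}h)$. This space is compact metrizable because $G$ is countable. However, the shift has fixed points (constant sequences), so (3) gives nothing there, and I must work inside a suitable closed invariant subspace.

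Instead, I would take a countable $G$-invariant unital separable $C^*$-subalgebra $\mathcal A\subseteq\ell^\infty(G)$. Take it generated by the left translates of any given countable family of bounded functions. Its Gelfand spectrum $X_{\mathcal A}$ is compact metrizable, and $G$ acts continuously on it. Evaluation at $e_G$ composed with the orbit map sends $G$ into $X_{\mathcal A}$ with dense image. An invariant probability measure on $X_{\mathcal A}$, via Riesz, is a left-invariant mean on $\mathcal A$.

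To conclude amenability, suppose $G$ is non-amenable. By the Tarski--F{\o}lner theorem it admits a paradoxical decomposition with finitely many pieces $A_i,B_j$ and elements $g_i,h_j$. Let $\mathcal A$ be generated by the translates of the finitely many indicators $\mathbf 1_{A_i},\mathbf 1_{B_j}$. A $G$-invariant mean $m$ on $\mathcal A$ then yields
$$
1=\sum_i m(\mathbf 1_{g_iA_i})+\sum_j m(\mathbf 1_{h_jB_j})=\sum_i m(\mathbf 1_{A_i})+\sum_j m(\mathbf 1_{B_j})\le 1,
$$
where the first equality needs both decompositions of $G$. Summing the two decompositions gives $2\le 1$, a contradiction. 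So (3) fails, which proves (3)$\Rightarrow$(1).

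The delicate points are the following. The translates must generate a separable algebra, which holds because $G$ is countable and there are finitely many generators. The action on the spectrum must be continuous. The correspondence between means on $\mathcal A$ and measures on the spectrum must respect invariance.
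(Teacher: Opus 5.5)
\textbf{Overall.} Your plan is correct and complete once two slips are fixed. One is a wrong invariance check in (1)$\Rightarrow$(2); the other is a typo in the paradox computation.

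\textbf{The slip in (1)$\Rightarrow$(2).} With $\psi(g)=\varphi(g^{-1}x_0)$, the function attached to $\varphi\circ h$ is $g\mapsto\varphi(hg^{-1}x_0)=\psi(gh^{-1})$. This is the \emph{right} translate $\psi\cdot h$.

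Your identity $(h\cdot\psi)(g)=\varphi(g^{-1}hx_0)$ proves something else. It only shows that $\Lambda$ is unchanged when the base point $x_0$ is replaced by $hx_0$. For non-abelian $G$, left invariance of $m$ says nothing about $m(\psi\cdot h)$. So the remark ``the inversion is chosen so that left invariance matches the action'' is backwards.

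The fix is to drop the inversion, as in your first definition $\varphi_{x_0}(g)=\varphi(gx_0)$. Then
$$
(\varphi\circ h)_{x_0}(g)=\varphi(hgx_0)=(h^{-1}\cdot\varphi_{x_0})(g),
$$
and left invariance gives $\Lambda(\varphi\circ h)=\Lambda(\varphi)$. Alternatively, keep $\psi$ and use a right-invariant mean, which exists by the recalled equivalence of invariant means. The Riesz and uniqueness step is then fine.

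\textbf{The typo in (3)$\Rightarrow$(1).} The displayed chain should start with $2$, not $1$. Each decomposition separately gives $\sum_i m(\mathbf 1_{g_iA_i})=1=\sum_j m(\mathbf 1_{h_jB_j})$. Invariance, via $\mathbf 1_{gA}=g\cdot\mathbf 1_A$, converts the total into $\sum_i m(\mathbf 1_{A_i})+\sum_j m(\mathbf 1_{B_j})$. Disjointness and positivity bound this by $1$. Your closing sentence shows this is what you meant. Also, ``countable $C^*$-subalgebra'' should read ``countably generated''.

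\textbf{Comparison with other routes.} The paper gives no proof of its own; it only quotes the result from Cortez's notes. A standard alternative gets (2)$\Rightarrow$(1) from the action on $\beta G$, using $C(\beta G)\cong\ell^\infty(G)$. It reaches the metrizable case by compactness:
\begin{itemize}
\item For each countably generated $G$-invariant unital subalgebra $\mathcal A\subseteq\ell^\infty(G)$, condition (3) applied to its metrizable spectrum gives an invariant state on $\mathcal A$.
\item This state extends by Hahn--Banach to a mean on $\ell^\infty(G)$.
\item The sets of means invariant on such $\mathcal A$ are non-empty, weak$^*$-closed and have the finite intersection property.
\item Weak$^*$ compactness of $\mathcal M(G)$ then yields a fully invariant mean.
\end{itemize}

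You instead use the Tarski--F{\o}lner theorem, so non-amenability is witnessed by finitely many sets and a single countably generated algebra suffices. This avoids the compactness step. It also produces an explicit zero-dimensional metrizable $G$-space with no invariant measure: the spectrum of an algebra generated by commuting idempotents, realizable as a subshift of $\{0,1,\dots,n+m\}^G$. The price is reliance on the hard direction of Tarski's theorem (non-amenable implies paradoxical). The compactness route needs only Hahn--Banach and Alaoglu.
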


Equivalently, amenability may be formulated as a fixed-point property for continuous affine actions on non-empty compact convex subsets of locally convex topological vector spaces; see \cite{Day61,Gre69,Pat88}. This viewpoint is particularly compatible with groupoid-theoretic formulations of amenability.

\subsection{Ample groupoids and Steinberg algebras}

We now recall the basic language of ample groupoids and Steinberg algebras. A group may be regarded as a groupoid with a single unit, whereas an ample groupoid may have many units and its local symmetries are encoded by compact open bisections.

\begin{definition}[{\cite[\S 2]{Ste09}; cf. \cite[\S 2]{CFST14}}]
A \emph{groupoid} is a small category in which every arrow is invertible. We identify each object with its identity arrow and denote the set of all units by $\mathcal G^{(0)}$. Equivalently,
$$
\mathcal G^{(0)} = \{\gamma^{-1}\gamma\mid\gamma\in\mathcal G\} = \{\gamma\gamma^{-1}\mid\gamma\in\mathcal G\}.
$$
For $\gamma\in\mathcal G$, its source and range are
$$
s(\gamma)=\gamma^{-1}\gamma, \ r(\gamma)=\gamma\gamma^{-1},
$$
respectively. The set of composable pairs is
$$
\mathcal G^{(2)} = \{(\alpha,\beta)\in\mathcal G\times\mathcal G\mid s(\alpha)=r(\beta)\}.
$$
For subsets $U,V\subseteq\mathcal G$, we define
$$
UV = \{\alpha\beta\mid\alpha\in U,\ \beta\in V,\ s(\alpha)=r(\beta)\}
$$
and
$$
U^{-1}=\{\gamma^{-1}\mid\gamma\in U\}.
$$
\end{definition}

\begin{definition}[{\cite{Ren80}; \cite[\S 2]{Ste09}; cf. \cite[\S 2]{CFST14}}]
A \emph{topological groupoid} is a groupoid $\mathcal G$ endowed with a topology such that the source and range maps
$$
s,r:\mathcal G\longrightarrow\mathcal G^{(0)},
$$
the inversion map
$$
\mathcal G\longrightarrow\mathcal G, \ \gamma\longmapsto\gamma^{-1},
$$
and the multiplication map
$$
\mathcal G^{(2)}\longrightarrow\mathcal G, \ (\alpha,\beta)\longmapsto\alpha\beta,
$$
are continuous, where $\mathcal G^{(2)}$ carries the subspace topology inherited from $\mathcal G\times\mathcal G$. The unit space $\mathcal G^{(0)}$ is always equipped with the subspace topology.
\end{definition}

\begin{definition}[{\cite[\S 2]{Ste09}; cf. \cite[Remark 2.1]{CFST14}}]
A topological groupoid $\mathcal G$ is called \emph{\'{e}tale} if the source map
$$
s:\mathcal G\longrightarrow\mathcal G^{(0)}
$$
is a local homeomorphism. Equivalently, the range map
$$
r:\mathcal G\longrightarrow\mathcal G^{(0)}
$$
is a local homeomorphism.
\end{definition}

\begin{definition}[{cf. \cite[Definition 2.2]{Ste09} and \cite[\S 2]{CFST14}}]
Let $\mathcal G$ be a topological groupoid. A subset $U\subseteq\mathcal G$ is called a \emph{bisection} if the restrictions
$$
r|_U:U\longrightarrow r(U), \ s|_U:U\longrightarrow s(U),
$$
are injective. If $U$ is open, then it is called an \emph{open bisection}. When $\mathcal G$ is \'{e}tale and $U$ is an open bisection, both restrictions are homeomorphisms onto open subsets of $\mathcal G^{(0)}$.
\end{definition}

\begin{proposition}[{\cite[Propositions 2.1 and 2.3]{Ste09}; cf. \cite[Remark 2.1 and Lemma 2.2]{CFST14}}]
Let $\mathcal G$ be an \'{e}tale groupoid. Then $\mathcal G^{(0)}$ is open in $\mathcal G$, the maps $r$ and $s$ are open, and the open bisections form a basis for the topology of $\mathcal G$.
\end{proposition}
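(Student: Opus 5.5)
We must show three things for an étale groupoid $\mathcal G$: the unit space $\mathcal G^{(0)}$ is open in $\mathcal G$, the maps $r$ and $s$ are open, and the open bisections form a basis. The plan is to get everything from the local homeomorphism property of $s$, together with continuity of inversion and multiplication. We start with the basis statement, since the other two parts use it.

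\emph{Open bisections form a basis.} Let $W\subseteq\mathcal G$ be open and $\gamma\in W$. Since $s$ is a local homeomorphism, there is an open $U_1\ni\gamma$ on which $s$ is injective. Since $r$ is also a local homeomorphism (the equivalence stated in the definition, which comes from $r(\gamma)=s(\gamma^{-1})$ and inversion being a homeomorphism), there is an open $U_2\ni\gamma$ on which $r$ is injective. Then $U=W\cap U_1\cap U_2$ is an open neighbourhood of $\gamma$ contained in $W$, and both $r$ and $s$ are injective on it. So $U$ is an open bisection, and these sets form a basis.

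\emph{Openness of $s$ and $r$.} A local homeomorphism is an open map. Indeed, any open $W$ is a union of open sets $V$ on which $s$ restricts to a homeomorphism onto an open subset $s(V)$ of $\mathcal G^{(0)}$, so $s(W)=\bigcup s(V)$ is open. The same argument applies to $r$, or one can use $r=s\circ\mathrm{inv}$ with inversion a homeomorphism.

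\emph{$\mathcal G^{(0)}$ is open.} This is the main obstacle. Fix a unit $x$, and choose an open bisection $U\ni x$ with $s|_U:U\to s(U)$ a homeomorphism onto an open set $s(U)\subseteq\mathcal G^{(0)}$. The aim is a neighbourhood of $x$ consisting only of units. Consider $V=U\cap s^{-1}(s(U)\cap U)$. Here $s(U)\cap U$ is open in $\mathcal G^{(0)}$, so it has the form $O\cap\mathcal G^{(0)}$ for some open $O\subseteq\mathcal G$; this is where the subspace topology matters. The idea is to compare $\gamma\in U$ with the unit $s(\gamma)$. If $s(\gamma)$ also lies in $U$, then $s(\gamma)$ and $\gamma$ are two points of $U$ with the same source, namely $s(s(\gamma))=s(\gamma)$. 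Injectivity of $s$ on $U$ then forces $\gamma=s(\gamma)$, so $\gamma$ is a unit. It therefore suffices to show that the set $\{\gamma\in U: s(\gamma)\in U\}$ is open and contains $x$. It contains $x$ because $s(x)=x\in U$. To see that it is open, let $W\subseteq\mathcal G$ be open with $U=W\cap\mathcal G$; if $U$ itself is open in $\mathcal G$ we may take $W=U$. The set is then $U\cap s^{-1}(U\cap\mathcal G^{(0)})$, and $U\cap\mathcal G^{(0)}$ is open in $\mathcal G^{(0)}$ as a trace of an open set. Continuity of $s:\mathcal G\to\mathcal G^{(0)}$ makes the preimage open in $\mathcal G$, so the set is an open neighbourhood of $x$ inside $\mathcal G^{(0)}$. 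Hence $\mathcal G^{(0)}$ is open.

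The only delicate point is to keep track of topologies: $U\cap\mathcal G^{(0)}$ must be open in the subspace topology, which is exactly the topology in which $s$ is assumed continuous.
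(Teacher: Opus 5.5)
Your argument is correct and is the standard one. The paper does not reprove this proposition; it cites Steinberg (and Clark--Farthing--Sims--Tomforde), where the unit space is shown to be open by your trick of comparing $\gamma$ with the unit $s(\gamma)$ inside a neighbourhood on which $s$ is injective, and where openness of $r,s$ and the bisection basis follow, as in your proposal, from the local homeomorphism property and $r=s\circ(\gamma\mapsto\gamma^{-1})$. The only blemish is cosmetic: since $U$ is already open in $\mathcal G$, the auxiliary sets $O$, $W$ and $s(U)\cap U$ are unnecessary, and the neighbourhood $U\cap s^{-1}(U\cap\mathcal G^{(0)})$ needs only injectivity of $s$ on $U$, not the full bisection property.
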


\begin{definition}[{\cite[Definition 2.4]{Ste09}}]
An \'{e}tale groupoid $\mathcal G$ is called \emph{ample} if its compact open bisections form a basis for its topology.
\end{definition}

We write
$$
\operatorname{Bis}_c(\mathcal G) = \{U\subseteq\mathcal G\mid U\text{ is a compact open bisection}\}.
$$

\begin{proposition}[{\cite[Proposition 2.3 and Definition 2.4]{Ste09}}]
Let $\mathcal G$ be a Hausdorff ample groupoid. Under setwise multiplication and inversion, $\operatorname{Bis}_c(\mathcal G)$ is an inverse semigroup. Its idempotents are precisely the compact open subsets of the unit space:
$$
E\bigl(\operatorname{Bis}_c(\mathcal G)\bigr) = \{C\subseteq\mathcal G^{(0)}\mid C\text{ is compact and open}\}.
$$
For compact open subsets $C,D\subseteq\mathcal G^{(0)}$, one has $CD=C\cap D$.
\end{proposition}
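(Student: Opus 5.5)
To prove the proposition, I will verify its three assertions in turn: closure of $\operatorname{Bis}_c(\mathcal G)$ under the operations, the inverse semigroup axioms, and the description of the idempotents. Throughout I use the étale facts already recalled: $\mathcal G^{(0)}$ is open, and $r,s$ are local homeomorphisms that restrict to homeomorphisms on open bisections. Hausdorffness is used so that compact subsets are closed, and so that $\mathcal G^{(0)}$ is closed in $\mathcal G$.

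\emph{Closure.} Let $U,V\in\operatorname{Bis}_c(\mathcal G)$. The set $U^{-1}$ is the image of $U$ under the inversion homeomorphism, which swaps $r$ and $s$; hence it is again a compact open bisection. For $UV$, I first show it is a bisection. If $\alpha\beta,\alpha'\beta'\in UV$ have the same source, then $s(\beta)=s(\beta')$, so $\beta=\beta'$ because $V$ is a bisection. Consequently $s(\alpha)=r(\beta)=s(\alpha')$, which gives $\alpha=\alpha'$. The same argument with $r$ handles ranges.

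Next, compactness. The set $(U\times V)\cap\mathcal G^{(2)}$ is closed in the compact set $U\times V$, because $\mathcal G^{(2)}$ is closed: it is the preimage of the diagonal of the Hausdorff space $\mathcal G^{(0)}$. So this set is compact, and $UV$ is its continuous image under multiplication, hence compact.

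Openness is the main obstacle, and I will argue it through local homeomorphisms. Write $\phi=(s|_V)^{-1}$, so that
$$
UV=\{\,u\,\phi(s(u)^{-1}\ \text{composed appropriately})\,\}.
$$
More concretely, define the map
$$
\theta\colon s^{-1}(s(V))\cap r^{-1}(\cdots)\to\mathcal G .
$$
A cleaner route is to use the identity
$$
UV=r|_{U}^{-1}\text{-translate}:\quad UV=\{\gamma\in\mathcal G\mid \gamma\in U\beta,\ \beta\in V\}.
$$
The standard argument runs as follows. Given $\gamma=\alpha\beta$, choose an open neighbourhood $W$ of $\gamma$ with $W\subseteq s^{-1}(s(V))$, and on $W$ consider the map
$$
\gamma'\mapsto\gamma'\,(s|_V)^{-1}(s(\gamma'))^{-1}.
$$
This map is continuous, and it sends $\gamma$ to $\alpha\in U$. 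Shrinking $W$ to the preimage of $U$ under this map gives an open set $W\subseteq UV$. So $UV$ is open. Alternatively, I can cite Exel's standard fact that products of open bisections in étale groupoids are open.

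\emph{Inverse semigroup axioms.} Associativity of the setwise product follows from associativity in $\mathcal G$. For $U^{-1}$ to be the inverse of $U$, I will show directly that $UU^{-1}U=U$ and $U^{-1}UU^{-1}=U^{-1}$. Both follow from $UU^{-1}=r(U)$ and $U^{-1}U=s(U)$, which hold by injectivity of $r$ and $s$ on $U$. Uniqueness of inverses then follows once the idempotents are known to commute.

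\emph{Idempotents.} Let $C,D\subseteq\mathcal G^{(0)}$ be compact open. Products of units are defined only for equal units, so $CD=C\cap D$; hence idempotents commute, and each such $C$ satisfies $CC=C$. Conversely, suppose $U=UU$ for some $U\in\operatorname{Bis}_c(\mathcal G)$. Take $\gamma\in U$ and write $\gamma=\alpha\beta$ with $\alpha,\beta\in U$. Then $s(\gamma)=s(\beta)$ forces $\beta=\gamma$, and $r(\gamma)=r(\alpha)$ forces $\alpha=\gamma$. Hence $\gamma\gamma=\gamma$, so $\gamma$ is a unit. Therefore $U\subseteq\mathcal G^{(0)}$, and $U$ is compact and open as required.
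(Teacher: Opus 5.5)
Your proof is correct. The paper gives no argument of its own here, only a citation of Steinberg's Proposition~2.3, and your direct verification is the standard argument behind that reference: the bisection check, compactness via closedness of $\mathcal G^{(2)}$, openness via the continuous map $\psi(\gamma')=\gamma'\bigl((s|_V)^{-1}(s(\gamma'))\bigr)^{-1}$ on $s^{-1}(s(V))$, the identities $UU^{-1}=r(U)$ and $U^{-1}U=s(U)$, and $U=UU\Rightarrow U\subseteq\mathcal G^{(0)}$. In a final write-up, delete the garbled false starts in the openness paragraph and state explicitly that $\psi(\gamma')\,(s|_V)^{-1}(s(\gamma'))=\gamma'$, since this is what shows $\psi^{-1}(U)\subseteq UV$.
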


\begin{remark}[{\cite[Proposition 2.7]{Ste09}}]
If $\mathcal G$ is a Hausdorff ample groupoid, then $\mathcal G^{(0)}$ is a locally compact Hausdorff space with a basis of compact open subsets. In particular, $\mathcal G^{(0)}$ is a locally compact Boolean space. Throughout the remainder of the paper, all ample groupoids are assumed to be Hausdorff unless explicitly stated otherwise.
\end{remark}

\begin{definition}[{\cite{Ren80}; cf. \cite{ADR00}}]
Let $Y\subseteq\mathcal G^{(0)}$. The set $Y$ is called \emph{invariant} if
$$
s(\gamma)\in Y \ \Longleftrightarrow \ r(\gamma)\in Y
$$
for every $\gamma\in\mathcal G$. The \emph{reduction} of $\mathcal G$ to $Y$ is the subgroupoid
$$
\mathcal G|_Y = r^{-1}(Y)\cap s^{-1}(Y) = \{\gamma\in\mathcal G\mid r(\gamma)\in Y,\ s(\gamma)\in Y\}.
$$
\end{definition}

\begin{definition}[{\cite[\S 3.1, Example 3.2 and \S 5.2]{ClarkHazrat20}}]\label{groupoid-orbit-isotropy}
Let $u\in\mathcal G^{(0)}$. The \emph{isotropy group at $u$} is
$$
\mathcal G_u^u=\{\gamma\in\mathcal G\mid s(\gamma)=u=r(\gamma)\},
$$
and the \emph{orbit of $u$} is
$$
\mathcal O_u=r\bigl(s^{-1}(u)\bigr)=\{r(\gamma)\mid\gamma\in\mathcal G,\ s(\gamma)=u\}.
$$
The groupoid $\mathcal G$ is called \emph{transitive} if it has a single orbit, equivalently, if for every $u,v\in\mathcal G^{(0)}$ there exists $\gamma\in\mathcal G$ such that
$$
s(\gamma)=v, \ r(\gamma)=u.
$$
It is called \emph{principal} if every isotropy group is trivial, that is,
$$
\mathcal G_u^u=\{u\}
$$
for every $u\in\mathcal G^{(0)}$.
\end{definition}

\begin{definition}
A topological groupoid is called \emph{discrete} if its arrow space is endowed with the discrete topology. It is called \emph{countable} if its arrow space is countable. Thus a countable discrete groupoid is a groupoid with countably many arrows equipped with the discrete topology.
\end{definition}

Let $R$ be a non-zero commutative ring with identity $1_R$, endowed with the discrete topology.

\begin{definition}[{\cite[Definition 3.1]{CFST14}}]
Let $Z$ be a topological space. A function $f:Z\longrightarrow R$ is called \emph{locally constant} if every $z\in Z$ has an open neighbourhood on which $f$ is constant. Its support is
$$
\operatorname{supp}(f) = \overline{\{z\in Z\mid f(z)\neq 0\}}.
$$
If $f$ is locally constant, then $\{z\in Z\mid f(z)\neq 0\}$ is clopen and therefore coincides with $\operatorname{supp}(f)$.
\end{definition}

\begin{definition}[{\cite[Definition 3.1, Proposition 3.3, Definition 3.4 and Propositions 3.5--3.6]{Ste09}; cf. \cite[Definition 3.3, Lemma 3.4 and Proposition 3.7]{CFST14}}]
Let $\mathcal G$ be a Hausdorff ample groupoid. The \emph{Steinberg algebra} of $\mathcal G$ over $R$, denoted by $A_R(\mathcal G)$, is the $R$-module of all locally constant, compactly supported functions
$$
f:\mathcal G\longrightarrow R,
$$
with pointwise addition and scalar multiplication and with convolution product
$$
(f\ast g)(\gamma)=\sum_{\alpha\beta=\gamma}f(\alpha)g(\beta), \ \gamma\in\mathcal G.
$$
The sum is finite because $\mathcal G$ is \'{e}tale and both functions have compact support. Equivalently,
$$
A_R(\mathcal G)=\operatorname{span}_R\{\mathbf 1_U\mid U\in\operatorname{Bis}_c(\mathcal G)\}.
$$
Moreover, for $U,V\in\operatorname{Bis}_c(\mathcal G)$,
$$
\mathbf 1_U\ast\mathbf 1_V=\mathbf 1_{UV}.
$$
\end{definition}

\begin{proposition}[{\cite[Proposition 3.5]{Ste09}}]\label{unit-space-cut-downs}
Let $C\subseteq\mathcal G^{(0)}$ be compact and open. For every $f\in A_R(\mathcal G)$ and every $\gamma\in\mathcal G$, one has
$$
(f\ast\mathbf 1_C)(\gamma)=f(\gamma)\mathbf 1_C(s(\gamma))
$$
and
$$
(\mathbf 1_C\ast f)(\gamma)=\mathbf 1_C(r(\gamma))f(\gamma).
$$
Consequently,
$$
A_R(\mathcal G)\mathbf 1_C=\{f\in A_R(\mathcal G)\mid\operatorname{supp}(f)\subseteq s^{-1}(C)\}
$$
and
$$
\mathbf 1_CA_R(\mathcal G)=\{f\in A_R(\mathcal G)\mid\operatorname{supp}(f)\subseteq r^{-1}(C)\}.
$$
\end{proposition}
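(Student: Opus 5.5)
We verify the two pointwise convolution identities directly from the definition of the product, and then read off the description of the principal one-sided ideals.

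For the first identity, fix $\gamma\in\mathcal G$ and expand
$$
(f\ast\mathbf 1_C)(\gamma)=\sum_{\alpha\beta=\gamma}f(\alpha)\mathbf 1_C(\beta).
$$
A term can be non-zero only when $\beta\in C\subseteq\mathcal G^{(0)}$, so $\beta$ is a unit. Composability then forces $\beta=s(\alpha)$, and hence $\alpha=\alpha\beta=\gamma$ and $\beta=s(\gamma)$. So at most one term survives, and it equals $f(\gamma)\mathbf 1_C(s(\gamma))$. The second identity follows in the same way: in $\sum_{\alpha\beta=\gamma}\mathbf 1_C(\alpha)f(\beta)$ the only surviving term has $\alpha=r(\beta)$, which forces $\beta=\gamma$ and $\alpha=r(\gamma)$. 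We should also note that $\mathbf 1_C$ lies in $A_R(\mathcal G)$: since $C$ is compact open in $\mathcal G^{(0)}$ and $\mathcal G^{(0)}$ is open in $\mathcal G$, the set $C$ is a compact open bisection.

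For the ideal description, consider first $f\ast\mathbf 1_C$ with $f\in A_R(\mathcal G)$. By the first identity it vanishes outside $s^{-1}(C)$. Its non-zero set is clopen because it is locally constant, so its support is contained in $s^{-1}(C)$. This gives one inclusion. Conversely, let $f\in A_R(\mathcal G)$ with $\operatorname{supp}(f)\subseteq s^{-1}(C)$. For every $\gamma$ we have $f(\gamma)=f(\gamma)\mathbf 1_C(s(\gamma))$, since whenever $f(\gamma)\neq 0$ we have $s(\gamma)\in C$. By the first identity this says $f=f\ast\mathbf 1_C$, so $f\in A_R(\mathcal G)\mathbf 1_C$. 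The right-ideal statement is symmetric, using $r$ and the second identity.

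The only delicate step is the bookkeeping in the convolution sum. The argument rests on the fact that a unit can be composed with an arrow only as its source on the right or its range on the left. Nothing deeper is needed, and no step should present a real obstacle.
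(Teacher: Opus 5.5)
Your proof is correct and follows essentially the same route as the paper. The paper also observes that a unit $\beta$ with $\alpha\beta=\gamma$ forces $\alpha=\gamma$ and $\beta=s(\gamma)$, and then obtains the ideal description from the fact that $f=f\ast\mathbf 1_C$ exactly when $f$ vanishes off $s^{-1}(C)$; your remarks that $\mathbf 1_C\in A_R(\mathcal G)$ and that the support of a locally constant function equals its clopen non-zero set only fill in details the paper leaves implicit.
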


\begin{proof}
If $\beta\in\mathcal G^{(0)}$ and $\alpha\beta=\gamma$, then necessarily $\alpha=\gamma$ and $\beta=s(\gamma)$. Hence
$$
(f\ast\mathbf 1_C)(\gamma)=f(\gamma)\mathbf 1_C(s(\gamma)).
$$
The second identity is proved analogously. The two descriptions of the principal one-sided ideals follow immediately: for example, $f=f\ast\mathbf 1_C$ if and only if $f(\gamma)=0$ whenever $s(\gamma)\notin C$.
\end{proof}

A family $\mathcal E$ of idempotents in an algebra $A$ is a \emph{set of local units} if, for every finite subset $F\subseteq A$, there exists $e\in\mathcal E$ such that $ea=ae=a$ for every $a\in F$.

\begin{proposition}[{\cite[Proposition 4.11]{Ste10}; cf. \cite[\S 2]{Rigby21}}]
\label{steinberg-local-units}
Let $\mathcal G$ be a Hausdorff ample groupoid, and let $R$ be a non-zero commutative unital ring. The family
$$
\{\mathbf 1_C\mid C\subseteq\mathcal G^{(0)}\text{ is compact and open}\}
$$
is a set of local units for $A_R(\mathcal G)$. Furthermore, $A_R(\mathcal G)$ is unital if and only if $\mathcal G^{(0)}$ is compact; in that case, its identity is $\mathbf 1_{\mathcal G^{(0)}}$.
\end{proposition}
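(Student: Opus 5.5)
The proposition has two parts: the compact open idempotents $\mathbf 1_C$ form a set of local units, and $A_R(\mathcal G)$ is unital exactly when $\mathcal G^{(0)}$ is compact. I will prove the first part from the description of $A_R(\mathcal G)$ as the $R$-span of characteristic functions of compact open bisections, together with Proposition~\ref{unit-space-cut-downs}. The second part will then follow from the first by a short support argument.

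\emph{Local units.} Let $F=\{f_1,\dots,f_n\}\subseteq A_R(\mathcal G)$ be finite, and write each $f_i$ as a finite combination of $\mathbf 1_{U}$ with $U\in\operatorname{Bis}_c(\mathcal G)$. Let $U_1,\dots,U_m$ be the bisections that occur. Because $r$ and $s$ are continuous and $\mathcal G$ is \'etale, each $r(U_j)$ and $s(U_j)$ is compact and open in $\mathcal G^{(0)}$. Put
$$
C=\bigcup_{j=1}^m\bigl(r(U_j)\cup s(U_j)\bigr).
$$
This is a finite union of compact open sets, hence compact open. By Proposition~\ref{unit-space-cut-downs}, $(f\ast\mathbf 1_C)(\gamma)=f(\gamma)\mathbf 1_C(s(\gamma))$. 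Every $f_i$ vanishes off $\bigcup_j U_j\subseteq s^{-1}(C)\cap r^{-1}(C)$, so $f_i\ast\mathbf 1_C=f_i$ and $\mathbf 1_C\ast f_i=f_i$. Idempotence of $\mathbf 1_C$ is immediate from $CC=C$.

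\emph{Unitality.} If $\mathcal G^{(0)}$ is compact, then it is compact open, since it is open in an \'etale groupoid. Hence $\mathbf 1_{\mathcal G^{(0)}}\in A_R(\mathcal G)$, and Proposition~\ref{unit-space-cut-downs} shows that it acts as the identity. Conversely, suppose $A_R(\mathcal G)$ has an identity $\mathbf 1$. By the first part there is a compact open $C\subseteq\mathcal G^{(0)}$ with $\mathbf 1\ast\mathbf 1_C=\mathbf 1$, so $\operatorname{supp}(\mathbf 1)\subseteq s^{-1}(C)$. Take any $u\in\mathcal G^{(0)}$. Choose a compact open neighbourhood $D$ of $u$ in $\mathcal G^{(0)}$, which exists by the Remark that $\mathcal G^{(0)}$ is a Boolean space. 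Evaluating at $u$ gives
$$
1_R=\mathbf 1_D(u)=(\mathbf 1_D\ast\mathbf 1_C)(u)=\mathbf 1_D(u)\mathbf 1_C(u),
$$
using $1_R\neq 0$. Hence $u\in C$, so $\mathcal G^{(0)}=C$ is compact. Finally, an identity is unique and $\mathbf 1_{\mathcal G^{(0)}}$ is one, so the identity is $\mathbf 1_{\mathcal G^{(0)}}$.

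The only step needing any care is the converse direction of unitality, specifically passing from the existence of an abstract identity to compactness. The local-unit absorption combined with pointwise evaluation at units handles it, and it relies on $R$ being non-zero. Everything else is routine support bookkeeping.
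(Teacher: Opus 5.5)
Your proof is correct. It differs from the paper's mainly in the converse direction of unitality.

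\textbf{Local units.} The paper covers the compact set $\bigcup_j\bigl(s(\operatorname{supp} f_j)\cup r(\operatorname{supp} f_j)\bigr)$ by finitely many compact open subsets of $\mathcal G^{(0)}$. You take $C=\bigcup_j\bigl(r(U_j)\cup s(U_j)\bigr)$ from a bisection expansion, which is already compact open and so skips the covering step. The difference is cosmetic.

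\textbf{Converse of unitality.} The paper does not use the first part here. It evaluates $h\ast\mathbf 1_{C_u}=\mathbf 1_{C_u}$ at $u$ to get $h(u)=1_R$ for every unit, so $\mathcal G^{(0)}\subseteq\operatorname{supp}(h)$. It then needs $\mathcal G^{(0)}$ to be closed in the Hausdorff groupoid to conclude compactness.

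You instead apply the local-unit property to the identity itself. This makes $\mathbf 1_C$ a right identity for the whole algebra, so every unit is trapped in the single compact open set $C$. Your route gives the sharper conclusion $\mathcal G^{(0)}=C$ and avoids the closedness of the unit space, hence Hausdorffness at that step. The price is that it depends on the first part.

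\textbf{One line to write out.} You state $\operatorname{supp}(\mathbf 1)\subseteq s^{-1}(C)$, but the equality you actually evaluate at $u$ is $\mathbf 1_D=\mathbf 1_D\ast\mathbf 1_C$, and you leave it unjustified. It follows from associativity:
$\mathbf 1_D=\mathbf 1_D\ast\mathbf 1=\mathbf 1_D\ast\mathbf 1\ast\mathbf 1_C=\mathbf 1_D\ast\mathbf 1_C$.
Alternatively, compute $(\mathbf 1_D\ast\mathbf 1)(u)=\mathbf 1(u)$ to get $\mathbf 1(u)=1_R$, and then use $\mathbf 1(u)=\mathbf 1(u)\mathbf 1_C(u)$.
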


\begin{proof}
Let $f_1,\dots,f_m\in A_R(\mathcal G)$. The set
$$
K=\bigcup_{j=1}^m\bigl(s(\operatorname{supp}(f_j))\cup r(\operatorname{supp}(f_j))\bigr)
$$
is compact in the unit space. Cover $K$ by finitely many compact open subsets of $\mathcal G^{(0)}$ and let $C$ be their union. Proposition~\ref{unit-space-cut-downs} gives
$$
\mathbf 1_C\ast f_j=f_j=f_j\ast\mathbf 1_C
$$
for every $j$. Also, $\mathbf 1_C\ast\mathbf 1_C=\mathbf 1_C$, so these idempotents are local units.

If $\mathcal G^{(0)}$ is compact, the same cut-down formulas show that $\mathbf 1_{\mathcal G^{(0)}}$ is an identity. Conversely, suppose that $h\in A_R(\mathcal G)$ is an identity. For each $u\in\mathcal G^{(0)}$, choose a compact open neighborhood $C_u\subseteq\mathcal G^{(0)}$. Then
$$
h(u)=(h\ast\mathbf 1_{C_u})(u)=\mathbf 1_{C_u}(u)=1_R\neq 0.
$$
Hence $\mathcal G^{(0)}\subseteq\operatorname{supp}(h)$. The unit space is closed in the Hausdorff groupoid: it is the equalizer of the continuous maps $\gamma\mapsto\gamma$ and $\gamma\mapsto s(\gamma)$, with the source regarded as a unit arrow. It follows that $\mathcal G^{(0)}$ is a closed subset of the compact set $\operatorname{supp}(h)$ and is therefore compact.
\end{proof}

\begin{example}[{\cite[\S 3]{Ste09}}]
Let $G$ be a discrete group, regarded as a groupoid with one unit. Then every non-empty compact open bisection is a singleton $\{g\}$, with $g\in G$, and
$$
A_R(G)\cong RG.
$$
Under this identification, $\mathbf 1_{\{g\}}$ corresponds to the canonical basis element $g\in RG$. Thus Steinberg algebras genuinely extend ordinary group algebras.
\end{example}

The identity
$$
\mathbf 1_U\ast\mathbf 1_V=\mathbf 1_{UV}
$$
shows that compact open bisections play inside the Steinberg algebra the role played by group elements inside a group algebra. This observation is the guiding principle of the present work: we aim to interpret amenability-type phenomena for ample groupoids through finite-dimensional algebraic data encoded in $A_R(\mathcal G)$.

\begin{remark}
The modern theory of groupoid amenability includes topological amenability, measurewise amenability, weak amenability, exactness and weak-containment phenomena for groupoid $C^*$-algebras and representations. We focus instead on the algebraic shadow of these notions in the setting of ample groupoids and Steinberg algebras; see \cite{ADR00,AnantharamanDelaroche23,Pacheco25,SridharanKumar26}.
\end{remark}

\section{Algebraic F{\o}lner amenability}\label{algebraic-folner}

The classical F{\o}lner criterion replaces exact invariance by finite subsets whose boundary is small relative to their cardinality. Its algebraic analogue replaces finite subsets by finite-dimensional subspaces and cardinality by vector-space dimension. In the setting of ample groupoids, the compact open bisections recalled in Section~\ref{preliminaries} provide a canonical family of multipliers, because
$$
A_{\Bbbk}(\mathcal G) = \operatorname{span}_{\Bbbk} \{\mathbf 1_U\mid U\in\operatorname{Bis}_c(\mathcal G)\}
$$
and
$$
\mathbf 1_U\ast\mathbf 1_V=\mathbf 1_{UV}
$$
for all $U,V\in\operatorname{Bis}_c(\mathcal G)$. This section first recalls algebraic amenability for associative algebras and then formulates both an unanchored and a source-anchored version for Hausdorff ample groupoids.

Throughout the remainder of the paper, $\Bbbk$ denotes a field. All algebras are assumed to be associative and non-zero, but they need not be unital. All groupoids and all reductions under consideration are assumed to be non-empty. Unless otherwise specified, all vector spaces and dimensions are taken over $\Bbbk$.

\subsection{Algebraically amenable algebras}

Let $A$ be a $\Bbbk$-algebra. If $L,W\subseteq A$ are $\Bbbk$-linear subspaces, we write
$$
LW = \operatorname{span}_{\Bbbk} \{aw\mid a\in L,\ w\in W\}.
$$
For a finite-dimensional subspace $L\subseteq A$ and a non-zero finite-dimensional subspace $W\subseteq A$, the quotient
$$
\partial_L(W) = \frac{W+LW}{W}
$$
will be called the \emph{left $L$-boundary} of $W$. Thus
$$
\dim_{\Bbbk}(W+LW) = \dim_{\Bbbk}(W)+\dim_{\Bbbk}(\partial_L(W)).
$$

\begin{definition}[{\cite[Definition 3.1 and Remark 3.2]{AraLiLledoWu18}; cf. \cite[Definition 1.3 and Lemma 3.2]{Bartholdi08}}]\label{algebraically-amenable-algebra}
Let $A$ be a $\Bbbk$-algebra, let $L\subseteq A$ be finite-dimensional and let $\varepsilon>0$. A non-zero finite-dimensional subspace $W\subseteq A$ is called a \emph{left $(L,\varepsilon)$-F{\o}lner subspace} if
$$
\dim_{\Bbbk}(W+LW) \leq (1+\varepsilon)\dim_{\Bbbk}(W).
$$
Equivalently,
$$
\frac{\dim_{\Bbbk}(\partial_L(W))}{\dim_{\Bbbk}(W)} \leq \varepsilon.
$$
The algebra $A$ is called \emph{left algebraically amenable} if, for every finite-dimensional subspace $L\subseteq A$ and every $\varepsilon>0$, it admits a left $(L,\varepsilon)$-F{\o}lner subspace.
\end{definition}

The right-handed notions are obtained by replacing $LW$ with $WL$. Unless otherwise stated, all algebraic amenability conditions in this paper are left-handed.

\begin{remark}\label{proper-algebraic-amenability}
Elek's original definition for affine algebras requires an increasing exhaustive sequence of F{\o}lner subspaces; see \cite[Definition 1.1]{Elek03}. In the terminology of \cite[Definition 3.1 and Remark 3.3]{AraLiLledoWu18}, this is the stronger notion of \emph{proper algebraic amenability}. We do not impose exhaustivity in Definition~\ref{algebraically-amenable-algebra}. The distinction is essential for algebras with zero divisors: for example, the existence of a non-zero finite-dimensional left ideal already implies algebraic amenability, but need not imply proper algebraic amenability; see \cite[Example 3.7]{AraLiLledoWu18}.
\end{remark}

\begin{remark}\label{unital-plus-W}
If $A$ is unital and $1_A\in L$, then $W\subseteq LW$, and hence
$$
W+LW=LW.
$$
We retain the term $+W$ because Steinberg algebras need not be unital when the unit space is non-compact. Although they have local units, as recalled in Section~\ref{preliminaries}, a fixed finite-dimensional test space $L$ need not contain a local unit for an arbitrary F{\o}lner subspace $W$.
\end{remark}

\begin{lemma}\label{generating-set-folner}
Let $A$ be a $\Bbbk$-algebra and let $\Omega\subseteq A$ satisfy
$$
A=\operatorname{span}_{\Bbbk}(\Omega).
$$
The following conditions are equivalent:
\begin{enumerate}
    \item $A$ is left algebraically amenable;
    \item for every finite subset $\Omega_0\subseteq\Omega$ and every $\varepsilon>0$, there exists a non-zero finite-dimensional subspace $W\subseteq A$ such that
    $$
    \dim_{\Bbbk}\left(W+\sum_{\omega\in\Omega_0}\omega W\right)\leq(1+\varepsilon)\dim_{\Bbbk}(W);
    $$
    \item for every finite subset $\Omega_0\subseteq\Omega$ and every $\varepsilon>0$, there exists a non-zero finite-dimensional subspace $W\subseteq A$ such that
    $$
    \dim_{\Bbbk}(W+\omega W)\leq(1+\varepsilon)\dim_{\Bbbk}(W)
    $$
    for every $\omega\in\Omega_0$.
\end{enumerate}
\end{lemma}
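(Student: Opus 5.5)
The plan is to prove the cycle of implications (1)$\Rightarrow$(2)$\Rightarrow$(3)$\Rightarrow$(1).

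For (1)$\Rightarrow$(2), given a finite $\Omega_0\subseteq\Omega$, let $L=\operatorname{span}_{\Bbbk}(\Omega_0)$. This is finite-dimensional and satisfies $LW=\sum_{\omega\in\Omega_0}\omega W$. A left $(L,\varepsilon)$-F{\o}lner subspace therefore gives (2) directly.

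For (2)$\Rightarrow$(3), note that $W+\omega W\subseteq W+\sum_{\omega'\in\Omega_0}\omega' W$ for each $\omega\in\Omega_0$, so the dimension bound passes down.

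The substantive direction is (3)$\Rightarrow$(1). Let $L$ be finite-dimensional and $\varepsilon>0$. Since $A=\operatorname{span}(\Omega)$, a basis of $L$ lies in the span of some finite $\Omega_0\subseteq\Omega$, say $\Omega_0=\{\omega_1,\dots,\omega_n\}$. Then $LW\subseteq\sum_i\omega_iW$, and hence
$$
W+LW\subseteq W+\sum_{i=1}^n\omega_iW=\sum_{i=1}^n(W+\omega_iW).
$$
The key step is a subadditivity estimate for boundaries. Set $V_i=W+\omega_iW$. Each $V_i$ contains $W$, so the natural map $\bigoplus_i V_i/W\to(\sum_i V_i)/W$ is surjective. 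This gives
$$
\dim\Bigl(\sum_i V_i\Bigr)\le\dim W+\sum_{i=1}^n\dim(V_i/W)=\dim W+\sum_{i=1}^n\dim\partial_{\Bbbk\omega_i}(W).
$$
Now apply (3) with the smaller parameter $\varepsilon/n$. This produces $W$ with $\dim(V_i/W)\le(\varepsilon/n)\dim W$ for every $i$. Consequently $\dim(W+LW)\le(1+\varepsilon)\dim W$, so $W$ is a left $(L,\varepsilon)$-F{\o}lner subspace.

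The only point needing care is this last step: the boundaries of the individual generators must be controlled simultaneously by a single $W$, which (3) supplies, and the loss is only the factor $n=|\Omega_0|$. That factor is absorbed by shrinking $\varepsilon$. I do not expect a genuine obstacle. One should just remember that the $+W$ term is needed because $A$ may lack a unit, and that all the $V_i$ contain the common subspace $W$, which is what makes the quotient argument valid. The case $\Omega_0=\varnothing$, or $L=0$, is trivial: any non-zero finite-dimensional $W$ works, and such a $W$ exists because $A$ is non-zero.
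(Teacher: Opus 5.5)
Your proposal is correct and follows essentially the same route as the paper: you take $L=\operatorname{span}_{\Bbbk}(\Omega_0)$ for (1)$\Rightarrow$(2), use the inclusion $W+\omega W\subseteq W+\sum_{\eta\in\Omega_0}\eta W$ for (2)$\Rightarrow$(3), and for (3)$\Rightarrow$(1) you apply (3) with $\varepsilon/|\Omega_0|$ together with subadditivity of dimension modulo $W$. You also treat the empty case separately using that $A\neq 0$, as the paper does.
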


\begin{proof}
Assume that $A$ is left algebraically amenable. Let $\Omega_0\subseteq\Omega$ be finite and let $\varepsilon>0$. Set
$$
L=\operatorname{span}_{\Bbbk}(\Omega_0).
$$
Choose a left $(L,\varepsilon)$-F{\o}lner subspace $W\subseteq A$. Since
$$
\sum_{\omega\in\Omega_0}\omega W\subseteq LW,
$$
we have
$$
W+\sum_{\omega\in\Omega_0}\omega W\subseteq W+LW.
$$
Therefore
$$
\dim_{\Bbbk}\left(W+\sum_{\omega\in\Omega_0}\omega W
\right)\leq(1+\varepsilon)\dim_{\Bbbk}(W).
$$
This proves $(1)\Rightarrow(2)$.

The implication $(2)\Rightarrow(3)$ is immediate, because for every $\omega\in\Omega_0$,
$$
W+\omega W\subseteq W+\sum_{\eta\in\Omega_0}\eta W.
$$

It remains to prove $(3)\Rightarrow(1)$. Let $L\subseteq A$ be finite-dimensional and let $\varepsilon>0$. Since $\Omega$ linearly spans $A$, there exists a finite subset $\Omega_0\subseteq\Omega$ such that
$$
L\subseteq\operatorname{span}_{\Bbbk}(\Omega_0).
$$
Set $q=|\Omega_0|$. If $q=0$, then $L=\{0\}$. Since $A$ is non-zero, choose $0\neq a\in A$ and set
$$
W=\operatorname{span}_{\Bbbk}\{a\}.
$$
Then $W+LW=W$, so $W$ is a left $(L,\varepsilon)$-F{\o}lner subspace.

Suppose now that $q\geq 1$. By condition $(3)$, applied with $\varepsilon/q$, there exists a non-zero finite-dimensional subspace $W\subseteq A$ such that
$$
\dim_{\Bbbk}(W+\omega W) \leq \left(1+\frac{\varepsilon}{q}\right)\dim_{\Bbbk}(W)
$$
for every $\omega\in\Omega_0$. Since
$$
W+LW \subseteq W+\sum_{\omega\in\Omega_0}\omega W,
$$
it is enough to estimate the dimension of the latter space. Passing to the quotient by $W$ and using subadditivity of dimension gives
$$
\begin{aligned}
&\dim_{\Bbbk}\left(W+\sum_{\omega\in\Omega_0}\omega W\right)-\dim_{\Bbbk}(W) \\
&\qquad= \dim_{\Bbbk}\left(\frac{W+\sum_{\omega\in\Omega_0}\omega W}{W}\right)\\
&\qquad\leq\sum_{\omega\in\Omega_0}\dim_{\Bbbk}\left(\frac{W+\omega W}{W}\right)
\\
&\qquad=\sum_{\omega\in\Omega_0}\left(\dim_{\Bbbk}(W+\omega W)-\dim_{\Bbbk}(W)\right)
\\
&\qquad\leq q \frac{\varepsilon}{q}\dim_{\Bbbk}(W)=\varepsilon\dim_{\Bbbk}(W).
\end{aligned}
$$
Consequently,
$$
\dim_{\Bbbk}(W+LW)\leq(1+\varepsilon)\dim_{\Bbbk}(W).
$$
Thus $A$ is left algebraically amenable.
\end{proof}

We record explicitly the group-algebra criterion that will be used in the one-unit calibration below.

\begin{theorem}[Bartholdi's group-algebra criterion, {\cite[Corollary 1.5]{Bartholdi08}}]\label{bartholdi-group-algebra}
Let $G$ be a discrete group and let $\Bbbk$ be a field. The following conditions are equivalent:
\begin{enumerate}
    \item $G$ is amenable;
    \item the group algebra $\Bbbk G$ is left algebraically amenable;
    \item the group algebra $\Bbbk G$ is right algebraically amenable.
\end{enumerate}
\end{theorem}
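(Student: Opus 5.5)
We prove the three equivalences with the two directions of left amenability, and then use inversion to pass between left and right. Throughout we use the canonical basis $\{g\mid g\in G\}$ of $\Bbbk G$ and Lemma~\ref{generating-set-folner} with $\Omega=G$, which reduces left algebraic amenability to the following: for every finite $K\subseteq G$ and every $\varepsilon>0$ there is a non-zero finite-dimensional $W$ with $\dim(W+kW)\leq(1+\varepsilon)\dim W$ for each $k\in K$.

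For $(1)\Rightarrow(2)$, let $K\subseteq G$ be finite and $\varepsilon>0$. The F{\o}lner criterion gives a non-empty finite $F\subseteq G$ with $|kF\setminus F|<\varepsilon|F|$ for all $k\in K$; this is the stated condition with $k^{-1}$ in place of $k$, since $|F\setminus k^{-1}F|=|kF\setminus F|$. Set $W=\operatorname{span}_{\Bbbk}(F)$. Then $W+kW=\operatorname{span}_{\Bbbk}(F\cup kF)$, whose dimension is $|F|+|kF\setminus F|$. Condition (3) of Lemma~\ref{generating-set-folner} therefore holds, and $\Bbbk G$ is left algebraically amenable.

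The converse $(2)\Rightarrow(1)$ is the real content and the main obstacle. A F{\o}lner subspace $W$ need not be spanned by group elements, so we cannot read off a F{\o}lner set directly. The plan is to argue by contraposition through the Tarski--F{\o}lner theorem. Suppose $G$ is non-amenable, and fix a paradoxical decomposition. As in Gromov's doubling reformulation, this yields a finite $S\subseteq G$ and a $2$-to-$1$ map $\phi\colon G\to G$ with $\phi(x)\in xS$ (equivalently, $|SF|\geq 2|F|$ for all finite $F$, via Hall's marriage theorem). We then transfer this to subspaces by a leading-term argument, following Bartholdi. Choose a total order on $G$ and, for each subspace $W$, pass to a basis in echelon form, so that $\dim W$ equals the number of distinct leading supports $\operatorname{lead}(W)$. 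The aim is to show $\dim(SW)\geq 2\dim W$ for every finite-dimensional $W$ with $L=\operatorname{span}(S)$, which rules out $(L,\varepsilon)$-F{\o}lner subspaces for $\varepsilon<1$. The delicate point is that translating by $s\in S$ does not preserve leading terms under an arbitrary order. To handle this, we would instead use a matching argument on supports. For each $w$ in an echelon basis, one picks points in the support of $w$ that are pairwise distinct across the basis (by Hall's theorem, using that the echelon pivots are distinct). We then push these points through the two preimage branches of $\phi$ to produce $2\dim W$ independent coordinate functionals on $SW$. We expect this step to need Bartholdi's argument essentially verbatim, and if the direct matching fails we would simply cite \cite[Corollary 1.5]{Bartholdi08}.

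For (3), the map $g\mapsto g^{-1}$ extends linearly to an anti-isomorphism $\Bbbk G\to(\Bbbk G)^{\mathrm{op}}$. It sends $LW$ to $W'L'$, where $W'$ and $L'$ are the images of $W$ and $L$, and it preserves dimensions. Hence left and right algebraic amenability are equivalent, which gives $(2)\Leftrightarrow(3)$.
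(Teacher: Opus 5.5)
With the fallback citation, your argument is the paper's argument. The paper also takes the hard direction from \cite[Corollary 1.5]{Bartholdi08} and passes between left and right via the linear anti-automorphism $\sum_g\lambda_g g\mapsto\sum_g\lambda_g g^{-1}$, exactly as in your last paragraph. Your direct proof of $(1)\Rightarrow(2)$ is correct; it is the same construction the paper gives in the proof of Theorem~\ref{group-case}. One bookkeeping point: the paper records Bartholdi's result for \emph{right} algebraic amenability. The citation for $(2)\Rightarrow(1)$ therefore has to be applied after your inversion step, as $(2)\Rightarrow(3)\Rightarrow(1)$.

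The self-contained route you sketch for $(2)\Rightarrow(1)$, however, has a genuine gap and should not be offered as a proof. Take an echelon basis $w_1,\dots,w_n$ of $W$ with distinct pivots $p_1,\dots,p_n$, so that $w_i(p_k)=\delta_{ik}$. Write the two branches of the doubling as $\psi_j(p)=t_j(p)\,p$ with $t_j(p)\in S$.

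To show that the $2n$ coordinate functionals at the points $\psi_j(p_i)$ are independent on $SW$, you need $2n$ vectors of $SW$ whose matrix of values at these points is invertible. The natural witnesses $t_j(p_i)w_i$ have value $1$ at $\psi_j(p_i)$. Their value at $\psi_{j'}(p_{i'})$, however, is $w_i\bigl(t_j(p_i)^{-1}t_{j'}(p_{i'})p_{i'}\bigr)$. This is taken at a point that is in general not a pivot, so nothing makes the matrix triangular.

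Triangularity would require left translation by elements of $S$ to carry leading terms to leading terms, i.e.\ an order that is left-invariant on the relevant finite set. No such order exists once $S$ contains an involution $t$ and both $x$ and $tx$ are involved, since $x<tx$ forces $tx<x$. Concretely, $W=\Bbbk(e+t)$ satisfies $tW=W$, so translating its pivot vector by $t$ produces nothing new.

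Hall's theorem supplies distinct points, not linear independence. So in your proposal the citation carries the entire weight of $(2)\Rightarrow(1)$, just as it does in the paper. There is also a small side mismatch. The condition $\phi(x)\in xS$ corresponds to $|FS^{-1}|\geq 2|F|$ and to right multipliers. For the left multipliers in $SW$ you want branches with $\psi_j(p)\in Sp$, i.e.\ $|SF|\geq 2|F|$.
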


\begin{proof}
Bartholdi proves the equivalence between amenability of $G$ and right algebraic amenability of $\Bbbk G$; see \cite[Definition 1.3 and Corollary 1.5]{Bartholdi08}. To pass between the right and left formulations, define
$$
\iota:\Bbbk G\longrightarrow\Bbbk G, \ \iota\left(\sum_{g\in G}\lambda_g g\right) = \sum_{g\in G}\lambda_g g^{-1}.
$$
The map $\iota$ is a $\Bbbk$-linear anti-automorphism, so
$$
\iota(ab)=\iota(b)\iota(a)
$$
for all $a,b\in\Bbbk G$.

Suppose that $\Bbbk G$ is right algebraically amenable. Let $L\subseteq\Bbbk G$ be finite-dimensional and let $\varepsilon>0$. Choose a non-zero finite-dimensional subspace $W\subseteq\Bbbk G$ such that
$$
\dim_{\Bbbk}\bigl(W+W\iota(L)\bigr) \leq (1+\varepsilon)\dim_{\Bbbk}(W).
$$
Since $\iota$ is a linear isomorphism and reverses products,
$$
\iota\bigl(W+W\iota(L)\bigr) = \iota(W)+L\iota(W).
$$
Therefore
$$
\dim_{\Bbbk}\bigl(\iota(W)+L\iota(W)\bigr) \leq (1+\varepsilon)\dim_{\Bbbk}\bigl(\iota(W)\bigr),
$$
which proves left algebraic amenability. The reverse implication follows from the same argument because $\iota^2$ is the identity. Thus the left and right conditions are equivalent, and Bartholdi's result completes the proof.
\end{proof}

\subsection{Weak bisectional F{\o}lner amenability}

Let $\mathcal G$ be a Hausdorff ample groupoid and set
$$
A=A_{\Bbbk}(\mathcal G).
$$
By the results recalled in Section~\ref{preliminaries}, the family
$$
\{\mathbf 1_U\mid U\in\operatorname{Bis}_c(\mathcal G)\}
$$
linearly spans $A$. This leads to the following intrinsic test on the canonical bisection generators.

\begin{definition}\label{weak-bisectional-folner}
Let $\mathcal E\subseteq\operatorname{Bis}_c(\mathcal G)$ be finite and let $\varepsilon>0$. A non-zero finite-dimensional subspace $W\subseteq A_{\Bbbk}(\mathcal G)$ is called a \emph{weak $(\mathcal E,\varepsilon)$-bisectional F{\o}lner subspace} if
$$
\dim_{\Bbbk}(W+\mathbf 1_U W)\leq(1+\varepsilon)\dim_{\Bbbk}(W)
$$
for every $U\in\mathcal E$.
\end{definition}

\begin{definition}\label{weak-bisectional-amenability}
A Hausdorff ample groupoid $\mathcal G$ is called \emph{weakly left bisectionally algebraically amenable over $\Bbbk$} if, for every finite subset $\mathcal E\subseteq\operatorname{Bis}_c(\mathcal G)$ and every $\varepsilon>0$, there exists a weak $(\mathcal E,\varepsilon)$-bisectional F{\o}lner subspace of $A_{\Bbbk}(\mathcal G)$.
\end{definition}

\begin{proposition}\label{weak-bisectional-equivalence}
Let $\mathcal G$ be a Hausdorff ample groupoid. Then $\mathcal G$ is weakly left bisectionally algebraically amenable over $\Bbbk$ if and only if its Steinberg algebra $A_{\Bbbk}(\mathcal G)$ is left algebraically amenable.
\end{proposition}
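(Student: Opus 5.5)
The plan is to read Proposition~\ref{weak-bisectional-equivalence} off Lemma~\ref{generating-set-folner}, with the generating set chosen to be the bisection indicators. Set
$$
\Omega=\{\mathbf 1_U\mid U\in\operatorname{Bis}_c(\mathcal G)\}\subseteq A_{\Bbbk}(\mathcal G).
$$
By the description of the Steinberg algebra recalled in Section~\ref{preliminaries}, we have $A_{\Bbbk}(\mathcal G)=\operatorname{span}_{\Bbbk}(\Omega)$. The algebra is non-zero because $\mathcal G$ is non-empty and ample: some compact open bisection is non-empty, and its indicator is a non-zero element. So the hypotheses of Lemma~\ref{generating-set-folner} are satisfied, and left algebraic amenability of $A_{\Bbbk}(\mathcal G)$ is equivalent to condition (3) of that lemma for this particular $\Omega$.

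It remains to check that condition (3) for $\Omega$ is exactly Definition~\ref{weak-bisectional-amenability}. The map $\mathcal E\mapsto\Omega_0=\{\mathbf 1_U\mid U\in\mathcal E\}$ sends finite subsets of $\operatorname{Bis}_c(\mathcal G)$ onto finite subsets of $\Omega$. For every $U\in\mathcal E$, the inequality required of a weak $(\mathcal E,\varepsilon)$-bisectional F{\o}lner subspace is
$$
\dim_{\Bbbk}(W+\mathbf 1_UW)\leq(1+\varepsilon)\dim_{\Bbbk}(W),
$$
which is word for word the inequality in condition (3) with $\omega=\mathbf 1_U$. So for each $\mathcal E$ and $\varepsilon$, a weak $(\mathcal E,\varepsilon)$-bisectional F{\o}lner subspace is the same object as a subspace witnessing (3) for $\Omega_0$ and $\varepsilon$.

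One bookkeeping point needs checking: the assignment $U\mapsto\mathbf 1_U$ is injective except that $\mathbf 1_\varnothing=0$. Both directions survive this. The indicator $\mathbf 1_\varnothing$ contributes $0\cdot W=0$, and $W+0=W$ trivially satisfies the inequality. A finite $\Omega_0\subseteq\Omega$ always arises from some finite $\mathcal E$ by choosing one preimage for each element.

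Combining the two steps gives the equivalence. I expect no real obstacle here. The only subtle point is that the lemma already absorbs the passage from separate bounds for each generator to the bound for an arbitrary finite-dimensional $L$, via subadditivity of dimension with $\varepsilon/q$. This is exactly what lets the weak, generator-wise condition control all finite-dimensional test spaces.
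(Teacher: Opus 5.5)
Your proposal is correct and matches the paper's proof: the converse is exactly Lemma~\ref{generating-set-folner} $(3)\Rightarrow(1)$ applied to $\Omega=\{\mathbf 1_U\mid U\in\operatorname{Bis}_c(\mathcal G)\}$. The paper proves the forward direction directly with $L=\operatorname{span}_{\Bbbk}\{\mathbf 1_U\mid U\in\mathcal E\}$, which is just the lemma's $(1)\Rightarrow(3)$ written out. Your remark about $\mathbf 1_\varnothing$ is harmless but unnecessary, since $U\mapsto\mathbf 1_U$ is injective.
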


\begin{proof}
Assume first that $A_{\Bbbk}(\mathcal G)$ is left algebraically amenable. Let $\mathcal E\subseteq\operatorname{Bis}_c(\mathcal G)$ be finite and let $\varepsilon>0$. Set
$$
L=\operatorname{span}_{\Bbbk}\{\mathbf 1_U\mid U\in\mathcal E\}.
$$
Choose a left $(L,\varepsilon)$-F{\o}lner subspace $W\subseteq A_{\Bbbk}(\mathcal G)$. For every $U\in\mathcal E$,
$$
W+\mathbf 1_U W\subseteq W+LW.
$$
Hence
$$
\dim_{\Bbbk}(W+\mathbf 1_U W)\leq\dim_{\Bbbk}(W+LW)\leq(1+\varepsilon)\dim_{\Bbbk}(W).
$$
Thus $W$ is a weak $(\mathcal E,\varepsilon)$-bisectional F{\o}lner subspace.

Conversely, suppose that $\mathcal G$ is weakly left bisectionally algebraically amenable. Apply Lemma~\ref{generating-set-folner} to
$$
\Omega=\{\mathbf 1_U\mid U\in\operatorname{Bis}_c(\mathcal G)\}.
$$
The family $\Omega$ spans $A_{\Bbbk}(\mathcal G)$, and Definition~\ref{weak-bisectional-amenability} is precisely condition $(3)$ of that lemma. Therefore $A_{\Bbbk}(\mathcal G)$ is left algebraically amenable.
\end{proof}

\begin{remark}\label{weak-condition-only-sees-algebra}
Proposition~\ref{weak-bisectional-equivalence} shows that weak bisectional amenability is exactly algebraic amenability of the Steinberg algebra expressed in its canonical bisection generators. It does not force the F{\o}lner subspace to interact with a prescribed region of the unit space. For groupoids with several components, this permits a F{\o}lner subspace to concentrate in only one component. The source-anchored condition introduced next prevents such avoidance when the prescribed anchor is contained in the unit space of that component. It does not, in general, prevent escape in the range direction.
\end{remark}

\subsection{Source-anchored algebraic amenability}

Let $C\subseteq\mathcal G^{(0)}$ be non-empty, compact and open. By Proposition~\ref{unit-space-cut-downs},
$$
A_{\Bbbk}(\mathcal G)\mathbf 1_C=\{f\in A_{\Bbbk}(\mathcal G)\mid\operatorname{supp}(f)\subseteq s^{-1}(C)\}.
$$
Moreover, $A_{\Bbbk}(\mathcal G)\mathbf 1_C$ is a left ideal of $A_{\Bbbk}(\mathcal G)$, because
$$
a(b\mathbf 1_C)=(ab)\mathbf 1_C
$$
for all $a,b\in A_{\Bbbk}(\mathcal G)$. Thus it is stable under all left multipliers appearing in the F{\o}lner condition.

\begin{definition}\label{anchored-bisectional-folner}
Let $C\subseteq\mathcal G^{(0)}$ be non-empty, compact and open, let $\mathcal E\subseteq\operatorname{Bis}_c(\mathcal G)$ be finite and let $\varepsilon>0$. A non-zero finite-dimensional subspace
$$
W\subseteq A_{\Bbbk}(\mathcal G)\mathbf 1_C
$$
is called a \emph{source-anchored $(C,\mathcal E,\varepsilon)$-bisectional F{\o}lner subspace} if
$$
\dim_{\Bbbk}(W+\mathbf 1_U W)\leq(1+\varepsilon)\dim_{\Bbbk}(W)
$$
for every $U\in\mathcal E$.
\end{definition}

\begin{definition}\label{anchored-algebraic-amenability}
A Hausdorff ample groupoid $\mathcal G$ is called \emph{source-anchored left algebraically amenable over $\Bbbk$} if, for every non-empty compact open subset $C\subseteq\mathcal G^{(0)}$, every finite subset $\mathcal E\subseteq\operatorname{Bis}_c(\mathcal G)$ and every $\varepsilon>0$, there exists a source-anchored $(C,\mathcal E,\varepsilon)$-bisectional F{\o}lner subspace.
\end{definition}

Unless explicitly qualified by the word ``weakly'', the expression \emph{algebraically amenable groupoid} will mean source-anchored left algebraically amenable in the remainder of the paper.

The next result gives the finite-dimensional multiplier formulation of our definition.

\begin{proposition}\label{anchored-finite-dimensional-test}
Let $\mathcal G$ be a Hausdorff ample groupoid. The following conditions are equivalent:
\begin{enumerate}
    \item $\mathcal G$ is source-anchored left algebraically amenable over $\Bbbk$;
    \item for every non-empty compact open subset $C\subseteq\mathcal G^{(0)}$, every finite-dimensional subspace $L\subseteq A_{\Bbbk}(\mathcal G)$ and every $\varepsilon>0$, there exists a non-zero finite-dimensional subspace
    $$
    W\subseteq A_{\Bbbk}(\mathcal G)\mathbf 1_C
    $$
    such that
    $$
    \dim_{\Bbbk}(W+LW)\leq(1+\varepsilon)\dim_{\Bbbk}(W).
    $$
\end{enumerate}
\end{proposition}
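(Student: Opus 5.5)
The plan is to mirror the proof of Lemma~\ref{generating-set-folner}, keeping track of the anchor throughout.

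For $(2)\Rightarrow(1)$, fix a non-empty compact open $C\subseteq\mathcal G^{(0)}$, a finite $\mathcal E\subseteq\operatorname{Bis}_c(\mathcal G)$ and $\varepsilon>0$. Put
$$
L=\operatorname{span}_{\Bbbk}\{\mathbf 1_U\mid U\in\mathcal E\},
$$
which is finite-dimensional. Condition $(2)$ supplies a non-zero finite-dimensional $W\subseteq A_{\Bbbk}(\mathcal G)\mathbf 1_C$ with $\dim_{\Bbbk}(W+LW)\leq(1+\varepsilon)\dim_{\Bbbk}(W)$. For each $U\in\mathcal E$ we have $W+\mathbf 1_UW\subseteq W+LW$, so $W$ is a source-anchored $(C,\mathcal E,\varepsilon)$-bisectional F{\o}lner subspace.

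For $(1)\Rightarrow(2)$, fix $C$, a finite-dimensional $L$ and $\varepsilon>0$.
\begin{enumerate}
\item Since the indicators $\mathbf 1_U$ with $U\in\operatorname{Bis}_c(\mathcal G)$ span $A_{\Bbbk}(\mathcal G)$, choose a finite $\mathcal E\subseteq\operatorname{Bis}_c(\mathcal G)$ with $L\subseteq\operatorname{span}_{\Bbbk}\{\mathbf 1_U\mid U\in\mathcal E\}$. Set $q=|\mathcal E|$.
\item If $q=0$, then $L=0$. Take $W=\Bbbk\mathbf 1_C$. This is non-zero because $C\neq\varnothing$, and it lies in $A_{\Bbbk}(\mathcal G)\mathbf 1_C$ because $\mathbf 1_C\ast\mathbf 1_C=\mathbf 1_C$. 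Then $W+LW=W$, so the inequality holds.
\item If $q\geq 1$, apply $(1)$ to $C$, $\mathcal E$ and $\varepsilon/q$. This gives a non-zero finite-dimensional $W\subseteq A_{\Bbbk}(\mathcal G)\mathbf 1_C$ with $\dim_{\Bbbk}(W+\mathbf 1_UW)\leq(1+\varepsilon/q)\dim_{\Bbbk}(W)$ for every $U\in\mathcal E$.
\item We have $W+LW\subseteq W+\sum_{U\in\mathcal E}\mathbf 1_UW$. Pass to the quotient by $W$ and use subadditivity of dimension, exactly as in Lemma~\ref{generating-set-folner}:
$$
\dim_{\Bbbk}\Bigl(W+\sum_{U\in\mathcal E}\mathbf 1_UW\Bigr)-\dim_{\Bbbk}(W)\leq\sum_{U\in\mathcal E}\bigl(\dim_{\Bbbk}(W+\mathbf 1_UW)-\dim_{\Bbbk}(W)\bigr)\leq\varepsilon\dim_{\Bbbk}(W).
\end{enumerate}
This yields $\dim_{\Bbbk}(W+LW)\leq(1+\varepsilon)\dim_{\Bbbk}(W)$, and $W$ was chosen inside $A_{\Bbbk}(\mathcal G)\mathbf 1_C$, so $(2)$ holds.

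There is no real obstacle here. The only point requiring care is that the anchoring constraint $W\subseteq A_{\Bbbk}(\mathcal G)\mathbf 1_C$ is never changed by the reductions. In both directions $W$ is taken directly from the hypothesis with the same $C$, and the degenerate case uses $\mathbf 1_C$ itself. The quotient estimate only needs the subspaces $\mathbf 1_UW$, which stay inside the left ideal $A_{\Bbbk}(\mathcal G)\mathbf 1_C$, although the estimate does not actually use that fact.
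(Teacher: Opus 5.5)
Your proposal is correct and follows essentially the same route as the paper. The paper proves $(2)\Rightarrow(1)$ with $L=\operatorname{span}_{\Bbbk}\{\mathbf 1_U\mid U\in\mathcal E\}$, and proves $(1)\Rightarrow(2)$ by covering $L$ with finitely many bisection indicators, using $W=\Bbbk\mathbf 1_C$ when $q=0$, and otherwise applying the hypothesis with $\varepsilon/q$ and the quotient subadditivity estimate from Lemma~\ref{generating-set-folner}.
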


\begin{proof}
Assume first that condition $(2)$ holds. Let $C\subseteq\mathcal G^{(0)}$ be non-empty, compact and open, let $\mathcal E\subseteq\operatorname{Bis}_c(\mathcal G)$ be finite and let $\varepsilon>0$. Set
$$
L=\operatorname{span}_{\Bbbk}\{\mathbf 1_U\mid U\in\mathcal E\}.
$$
By condition $(2)$, there exists a non-zero finite-dimensional subspace
$$
W\subseteq A_{\Bbbk}(\mathcal G)\mathbf 1_C
$$
such that
$$
\dim_{\Bbbk}(W+LW)\leq(1+\varepsilon)\dim_{\Bbbk}(W).
$$
For every $U\in\mathcal E$,
$$
W+\mathbf 1_U W\subseteq W+LW.
$$
Therefore $W$ is a source-anchored $(C,\mathcal E,\varepsilon)$-bisectional F{\o}lner subspace. Hence condition $(1)$ holds.

Conversely, assume condition $(1)$. Let $C\subseteq\mathcal G^{(0)}$ be non-empty, compact and open, let $L\subseteq A_{\Bbbk}(\mathcal G)$ be finite-dimensional and let $\varepsilon>0$. Since the characteristic functions of compact open bisections linearly span $A_{\Bbbk}(\mathcal G)$, there exists a finite subset $\mathcal E\subseteq\operatorname{Bis}_c(\mathcal G)$ such that
$$
L\subseteq\operatorname{span}_{\Bbbk}\{\mathbf 1_U\mid U\in\mathcal E\}.
$$
Set $q=|\mathcal E|$. If $q=0$, then $L=\{0\}$. Taking
$$
W=\operatorname{span}_{\Bbbk}\{\mathbf 1_C\}\subseteq A_{\Bbbk}(\mathcal G)\mathbf 1_C
$$
gives $W+LW=W$.

Suppose that $q\geq 1$. By condition $(1)$, applied with $\varepsilon/q$, there exists a non-zero finite-dimensional subspace
$$
W\subseteq A_{\Bbbk}(\mathcal G)\mathbf 1_C
$$
such that
$$
\dim_{\Bbbk}(W+\mathbf 1_U W)\leq\left(1+\frac{\varepsilon}{q}\right)\dim_{\Bbbk}(W)
$$
for every $U\in\mathcal E$. Since $A_{\Bbbk}(\mathcal G)\mathbf 1_C$ is a left ideal, all spaces in the following computation remain inside that ideal. As in the proof of Lemma~\ref{generating-set-folner},
$$
W+LW\subseteq W+\sum_{U\in\mathcal E}\mathbf 1_U W
$$
and
$$
\begin{aligned}
&\dim_{\Bbbk} \left(W+\sum_{U\in\mathcal E}\mathbf 1_U W\right)-\dim_{\Bbbk}(W) \\
&\qquad\leq\sum_{U\in\mathcal E}\left(\dim_{\Bbbk}(W+\mathbf 1_U W)-\dim_{\Bbbk}(W)\right) \\
&\qquad\leq q\frac{\varepsilon}{q}\dim_{\Bbbk}(W)=\varepsilon\dim_{\Bbbk}(W).
\end{aligned}
$$
Consequently,
$$
\dim_{\Bbbk}(W+LW)\leq(1+\varepsilon)\dim_{\Bbbk}(W).
$$
Thus condition $(2)$ holds.
\end{proof}

\begin{remark}\label{anchored-module-interpretation}
The source-anchored condition can be expressed in the language of algebraically amenable modules. For a non-zero left $A$-module $M$, use the left-handed analogue of \cite[Definition 1.3]{Bartholdi08}: for every finite-dimensional $L\subseteq A$ and every $\varepsilon>0$, there must exist a non-zero finite-dimensional $W\subseteq M$ with
$$
\dim_{\Bbbk}(W+LW)\leq(1+\varepsilon)\dim_{\Bbbk}(W).
$$
By Proposition~\ref{anchored-finite-dimensional-test}, a Hausdorff ample groupoid $\mathcal G$ is source-anchored algebraically amenable exactly when every left module $A_{\Bbbk}(\mathcal G)\mathbf 1_C$, with $C$ non-empty compact open in $\mathcal G^{(0)}$, has this property. Thus the anchoring selects a canonical family of modules determined by the unit space; the underlying module F{\o}lner condition is not a new notion of amenability for arbitrary algebras.
\end{remark}

\begin{proposition}\label{anchored-implies-weak}
Every source-anchored left algebraically amenable Hausdorff ample groupoid is weakly left bisectionally algebraically amenable.
\end{proposition}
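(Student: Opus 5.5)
The plan is to extract a weak F{\o}lner subspace directly from a source-anchored one. This requires exhibiting a non-empty compact open anchor. Let $\mathcal E\subseteq\operatorname{Bis}_c(\mathcal G)$ be finite and let $\varepsilon>0$. Since all groupoids under consideration are non-empty, $\mathcal G^{(0)}$ is non-empty; it has a basis of compact open subsets by the remark following the description of $\operatorname{Bis}_c(\mathcal G)$. So we may choose a unit $u\in\mathcal G^{(0)}$ and a non-empty compact open subset $C\subseteq\mathcal G^{(0)}$ containing $u$.

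Next we apply Definition~\ref{anchored-algebraic-amenability} to the triple $(C,\mathcal E,\varepsilon)$. This produces a non-zero finite-dimensional subspace $W\subseteq A_{\Bbbk}(\mathcal G)\mathbf 1_C$ such that
$$
\dim_{\Bbbk}(W+\mathbf 1_U W)\leq(1+\varepsilon)\dim_{\Bbbk}(W)
$$
for every $U\in\mathcal E$. Since $A_{\Bbbk}(\mathcal G)\mathbf 1_C\subseteq A_{\Bbbk}(\mathcal G)$, the same $W$ is a non-zero finite-dimensional subspace of $A_{\Bbbk}(\mathcal G)$. It satisfies verbatim the inequality in Definition~\ref{weak-bisectional-folner}, so it is a weak $(\mathcal E,\varepsilon)$-bisectional F{\o}lner subspace. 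As $\mathcal E$ and $\varepsilon$ were arbitrary, Definition~\ref{weak-bisectional-amenability} holds.

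Finally, we may add via Proposition~\ref{weak-bisectional-equivalence} that $A_{\Bbbk}(\mathcal G)$ is then left algebraically amenable, although the statement does not require this.

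There is essentially no obstacle here. The only point needing care is the existence of at least one non-empty compact open $C$, without which the anchored condition would be vacuous; non-emptiness of $\mathcal G$ together with ampleness guarantees it.
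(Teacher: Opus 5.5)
Your proposal is correct and follows essentially the same argument as the paper: you use non-emptiness and ampleness to obtain a non-empty compact open anchor $C\subseteq\mathcal G^{(0)}$, and then observe that any source-anchored $(C,\mathcal E,\varepsilon)$-bisectional F{\o}lner subspace is already a weak $(\mathcal E,\varepsilon)$-bisectional one. The closing remark about Proposition~\ref{weak-bisectional-equivalence} is harmless but not needed.
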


\begin{proof}
Let $\mathcal G$ be source-anchored left algebraically amenable. Since $\mathcal G$ is non-empty and ample, its unit space contains a non-empty compact open subset $C$. Given a finite subset $\mathcal E\subseteq\operatorname{Bis}_c(\mathcal G)$ and $\varepsilon>0$, any source-anchored $(C,\mathcal E,\varepsilon)$-bisectional F{\o}lner subspace is, after forgetting the containment in $A_{\Bbbk}(\mathcal G)\mathbf 1_C$, a weak $(\mathcal E,\varepsilon)$-bisectional F{\o}lner subspace.
\end{proof}

\begin{remark}\label{weak-not-anchored}
The converse to Proposition~\ref{anchored-implies-weak} fails in general. As will be shown later, weak bisectional amenability may be witnessed inside a single amenable component even when another component is non-amenable. Source anchoring prevents this phenomenon by requiring a F{\o}lner subspace over every non-empty compact open region of the unit space.
\end{remark}

\subsection{The one-unit case}

Let $G$ be a discrete group regarded as a groupoid with one unit, denoted by $e_G$. Then
$$
G^{(0)}=\{e_G\},
$$
and every non-empty compact open bisection is a singleton $\{g\}$ with $g\in G$. Moreover,
$$
A_{\Bbbk}(G)\cong\Bbbk G,\ \mathbf 1_{\{g\}}\longleftrightarrow\delta_g.
$$
Since $\mathbf 1_{\{e_G\}}$ is the identity of $\Bbbk G$,
$$
A_{\Bbbk}(G)\mathbf 1_{\{e_G\}}=A_{\Bbbk}(G).
$$
Thus the weak and source-anchored conditions coincide in the one-unit case.

\begin{theorem}\label{group-case}
Let $G$ be a countable discrete group and let $\Bbbk$ be a field. The following conditions are equivalent:
\begin{enumerate}
    \item $G$ is amenable;
    \item the group algebra $\Bbbk G$ is left algebraically amenable;
    \item the group algebra $\Bbbk G$ is right algebraically amenable;
    \item the one-unit groupoid associated to $G$ is weakly left bisectionally algebraically amenable over $\Bbbk$;
    \item the one-unit groupoid associated to $G$ is source-anchored left algebraically amenable over $\Bbbk$.
\end{enumerate}
\end{theorem}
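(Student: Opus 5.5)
The plan is to assemble the equivalences from results already stated, using the identification $A_{\Bbbk}(G)\cong\Bbbk G$, $\mathbf 1_{\{g\}}\leftrightarrow\delta_g$, recalled just before the statement.

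The equivalences $(1)\Leftrightarrow(2)\Leftrightarrow(3)$ are exactly Theorem~\ref{bartholdi-group-algebra}, so nothing new is needed there.

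For $(2)\Leftrightarrow(4)$, apply Proposition~\ref{weak-bisectional-equivalence} to the one-unit groupoid. It says that weak left bisectional algebraic amenability is equivalent to left algebraic amenability of $A_{\Bbbk}(G)$. The identification above is a $\Bbbk$-algebra isomorphism, and an algebra isomorphism $\phi$ preserves dimensions and satisfies
$$
\phi(W+LW)=\phi(W)+\phi(L)\phi(W).
$$
Hence left algebraic amenability transfers across it, and $(2)\Leftrightarrow(4)$ follows. One point should be checked explicitly. Since the unit space $\{e_G\}$ is discrete, the compact open bisections are exactly $\varnothing$ and the singletons $\{g\}$. So the bisectional test family $\{\mathbf 1_U\}$ corresponds to $\{0\}\cup\{\delta_g\mid g\in G\}$, which spans $\Bbbk G$, and the hypothesis of Lemma~\ref{generating-set-folner} is met.

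For $(4)\Leftrightarrow(5)$, the only non-empty compact open subset of the unit space is $C=\{e_G\}$. Moreover $\mathbf 1_{\{e_G\}}$ is the identity of $A_{\Bbbk}(G)$, so
$$
A_{\Bbbk}(G)\mathbf 1_{C}=A_{\Bbbk}(G).
$$
The source-anchored condition in Definition~\ref{anchored-algebraic-amenability} therefore quantifies over exactly the same subspaces $W$, with the same inequalities, as Definition~\ref{weak-bisectional-amenability}. The two conditions are thus literally identical. Alternatively, one direction is Proposition~\ref{anchored-implies-weak}, and the converse is the observation just made.

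There is no real obstacle here: the substantive content, Bartholdi's criterion, is already recorded, and the rest is bookkeeping. The step needing the most care is $(2)\Leftrightarrow(4)$, namely confirming that the compact open bisections of the one-unit groupoid are exactly the singletons together with $\varnothing$. The empty bisection contributes $\mathbf 1_\varnothing=0$, and $W+0\cdot W=W$ is harmless, so it does not affect either condition.
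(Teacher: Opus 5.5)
Your proposal is correct and matches the paper's proof. Bartholdi's criterion gives $(1)\Leftrightarrow(2)\Leftrightarrow(3)$, Proposition~\ref{weak-bisectional-equivalence} together with $A_{\Bbbk}(G)\cong\Bbbk G$ gives $(2)\Leftrightarrow(4)$, and $A_{\Bbbk}(G)\mathbf 1_{\{e_G\}}=A_{\Bbbk}(G)$ makes $(4)$ and $(5)$ the same condition. The paper additionally writes out a direct F{\o}lner-set construction for $(1)\Rightarrow(5)$, but only for completeness. One small correction: nonempty bisections here are singletons because $s|_U$ is injective into the one-point unit space, not because that space is discrete; discreteness of $G$ is what makes the singletons compact open.
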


\begin{proof}
The equivalence of conditions $(1)$--$(3)$ is Theorem~\ref{bartholdi-group-algebra}. The equivalence $(2)\Longleftrightarrow(4)$ follows from Proposition~\ref{weak-bisectional-equivalence}. Since the unit space consists of the single compact open set $\{e_G\}$ and
$$
A_{\Bbbk}(G)\mathbf 1_{\{e_G\}}=A_{\Bbbk}(G),
$$
conditions $(4)$ and $(5)$ are equivalent.

For completeness, we give the direct construction of the F{\o}lner subspaces in the implication $(1)\Rightarrow(5)$. Let $\mathcal E\subseteq\operatorname{Bis}_c(G)$ be finite and let $\varepsilon>0$. Discarding the empty bisection, there is a finite subset $K\subseteq G$ such that
$$
\mathcal E\setminus\{\varnothing\}=\{\{g\}\mid g\in K\}.
$$
Apply the left F{\o}lner condition to $K^{-1}$ and $\varepsilon$. There exists a non-empty finite subset $F\subseteq G$ such that
$$
\frac{|F\setminus g^{-1}F|}{|F|}<\varepsilon
$$
for every $g\in K$. Since left multiplication by $g$ is a bijection,
$$
|gF\setminus F|=|F\setminus g^{-1}F|.
$$
Therefore
$$
\frac{|F\cup gF|}{|F|}=1+\frac{|gF\setminus F|}{|F|}<1+\varepsilon
$$
for every $g\in K$.

Define
$$
W_F=\operatorname{span}_{\Bbbk}\{\delta_t\mid t\in F\}\subseteq\Bbbk G.
$$
The canonical group elements $\{\delta_t\mid t\in G\}$ form a $\Bbbk$-basis of $\Bbbk G$, and
$$
\delta_g W_F=\operatorname{span}_{\Bbbk}\{\delta_{gt}\mid t\in F\}.
$$
Consequently,
$$
W_F+\delta_g W_F=\operatorname{span}_{\Bbbk}\{\delta_t\mid t\in F\cup gF\},
$$
and hence
$$
\begin{aligned}
\dim_{\Bbbk}(W_F+\delta_g W_F)&=|F\cup gF|\\
&<(1+\varepsilon)|F|\\
&=(1+\varepsilon)\dim_{\Bbbk}(W_F).
\end{aligned}
$$
For the empty bisection, multiplication by $\mathbf 1_{\varnothing}=0$ leaves $W_F$ unchanged. Finally,
$$
W_F\subseteq A_{\Bbbk}(G)\mathbf 1_{\{e_G\}}=A_{\Bbbk}(G),
$$
so $W_F$ is a source-anchored $(\{e_G\},\mathcal E,\varepsilon)$-bisectional F{\o}lner subspace.
\end{proof}

\begin{corollary}\label{field-independence-group-case}
Let $G$ be a countable discrete group. If the one-unit groupoid associated to $G$ is source-anchored left algebraically amenable over one field, then it is source-anchored left algebraically amenable over every field.
\end{corollary}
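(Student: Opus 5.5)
The corollary should follow by passing through group amenability, which makes no reference to the field. Suppose the one-unit groupoid associated to $G$ is source-anchored left algebraically amenable over some field $\Bbbk_0$. Apply Theorem~\ref{group-case} over $\Bbbk_0$, using the implication $(5)\Rightarrow(1)$, to conclude that $G$ is amenable. Amenability of $G$ is a purely group-theoretic property: it is defined through left invariant finitely additive probability measures, or equivalently through the F{\o}lner condition.

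Now let $\Bbbk$ be an arbitrary field and apply Theorem~\ref{group-case} again, this time over $\Bbbk$, using $(1)\Rightarrow(5)$. This shows that the one-unit groupoid is source-anchored left algebraically amenable over $\Bbbk$. The proof of Theorem~\ref{group-case} also gives the F{\o}lner subspaces explicitly. Any F{\o}lner set $F\subseteq G$ for $K^{-1}$ yields
$$
W_F=\operatorname{span}_{\Bbbk}\{\delta_t\mid t\in F\}.
$$
The dimension count $\dim_{\Bbbk}(W_F+\delta_gW_F)=|F\cup gF|$ depends only on the group, since the $\delta_t$ form a basis over every field.

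The only step needing care is the implication $(5)\Rightarrow(1)$ over an arbitrary field. It relies on Bartholdi's criterion (Theorem~\ref{bartholdi-group-algebra}), which is stated for every field, together with the identification $A_{\Bbbk}(G)\mathbf 1_{\{e_G\}}=A_{\Bbbk}(G)$ and Proposition~\ref{weak-bisectional-equivalence}. Both of these hold over any field, so no difficulty is expected. The countability hypothesis on $G$ matches the hypothesis of Theorem~\ref{group-case}, so that theorem applies in both directions.
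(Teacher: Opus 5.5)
Your proposal is correct and is essentially the paper's proof: apply Theorem~\ref{group-case} over the given field to obtain amenability of $G$, which does not depend on the field, and then apply the same theorem over an arbitrary field. Your additional remarks on the explicit subspaces $W_F$ and on the chain $(5)\Leftrightarrow(4)\Leftrightarrow(2)\Leftrightarrow(1)$ accurately describe what Theorem~\ref{group-case} already provides.
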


\begin{proof}
By Theorem~\ref{group-case}, source-anchored algebraic amenability over one field is equivalent to amenability of $G$, and amenability of $G$ is independent of the coefficient field. Applying the same theorem to any other field gives the conclusion.
\end{proof}

\begin{remark}
Theorem~\ref{group-case} is the basic consistency test for the groupoid notion developed here. It shows that source anchoring introduces no additional restriction in the one-unit setting, while for groupoids with many units it prevents F{\o}lner subspaces from avoiding prescribed compact open regions of the unit space.
\end{remark}

\section{Permanence properties}\label{permanence}

Section~\ref{algebraic-folner} introduced two algebraic F{\o}lner properties for Hausdorff ample groupoids. The weak property is exactly algebraic amenability of the Steinberg algebra, whereas the source-anchored property requires F{\o}lner subspaces over every non-empty compact open region of the unit space. Throughout this section, the expression \emph{algebraically amenable groupoid} always means source-anchored left algebraically amenable over the fixed field $\Bbbk$, in the sense of Definition~\ref{anchored-algebraic-amenability}.

The classical permanence properties recalled in Section~\ref{preliminaries} suggest asking whether the groupoid property is preserved by subgroupoids, disjoint unions, products and directed unions. The answer for arbitrary subgroupoids is negative. We first isolate the natural positive class of subgroupoids and then give two counterexamples that explain the role, and the limitations, of source anchoring.

\subsection{Isomorphism invariance}

\begin{proposition}\label{isomorphism-invariance}
Let $\Phi:\mathcal G\longrightarrow\mathcal H$ be an isomorphism of topological groupoids between Hausdorff ample groupoids. Then $\mathcal G$ is algebraically amenable over $\Bbbk$ if and only if $\mathcal H$ is algebraically amenable over $\Bbbk$.
\end{proposition}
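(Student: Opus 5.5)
The plan is to transport everything along the algebra isomorphism induced by $\Phi$. By symmetry (apply the argument to $\Phi^{-1}$, which is again an isomorphism of topological groupoids), it suffices to show that algebraic amenability of $\mathcal G$ implies that of $\mathcal H$. Define
$$
\Phi^\ast:A_{\Bbbk}(\mathcal H)\longrightarrow A_{\Bbbk}(\mathcal G),\qquad \Phi^\ast(f)=f\circ\Phi .
$$
Since $\Phi$ is a homeomorphism, $f\circ\Phi$ is locally constant with compact support $\Phi^{-1}(\operatorname{supp}f)$. Since $\Phi$ is a bijective groupoid homomorphism, $\Phi(\alpha)\Phi(\beta)=\Phi(\gamma)$ holds exactly when $\alpha\beta=\gamma$. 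Hence the convolution sums correspond term by term, and $\Phi^\ast$ is a $\Bbbk$-algebra isomorphism with inverse $(\Phi^{-1})^\ast$.

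Next I record how $\Phi^\ast$ acts on the relevant data. Since $\Phi$ is a homeomorphism that commutes with $s$, $r$ and inversion, it maps $\mathcal G^{(0)}$ onto $\mathcal H^{(0)}$ and compact open bisections onto compact open bisections. Thus $V\mapsto\Phi^{-1}(V)$ is a bijection $\operatorname{Bis}_c(\mathcal H)\to\operatorname{Bis}_c(\mathcal G)$ with
$$
\Phi^\ast(\mathbf 1_V)=\mathbf 1_{\Phi^{-1}(V)} .
$$
In particular, non-empty compact open $D\subseteq\mathcal H^{(0)}$ correspond to non-empty compact open $C=\Phi^{-1}(D)\subseteq\mathcal G^{(0)}$. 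Because $\Phi^\ast$ is multiplicative,
$$
\Phi^\ast\bigl(A_{\Bbbk}(\mathcal H)\mathbf 1_D\bigr)=A_{\Bbbk}(\mathcal G)\mathbf 1_C .
$$
Alternatively, this follows from Proposition~\ref{unit-space-cut-downs} and $s\circ\Phi=\Phi\circ s$.

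Now fix a non-empty compact open $D\subseteq\mathcal H^{(0)}$, a finite $\mathcal E\subseteq\operatorname{Bis}_c(\mathcal H)$ and $\varepsilon>0$. Set $C=\Phi^{-1}(D)$ and $\mathcal E'=\{\Phi^{-1}(V)\mid V\in\mathcal E\}$. By hypothesis, there is a source-anchored $(C,\mathcal E',\varepsilon)$-bisectional F{\o}lner subspace $W\subseteq A_{\Bbbk}(\mathcal G)\mathbf 1_C$. Let $W'=(\Phi^\ast)^{-1}(W)$; it is non-zero, finite-dimensional and contained in $A_{\Bbbk}(\mathcal H)\mathbf 1_D$. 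Since $(\Phi^\ast)^{-1}$ is a linear isomorphism and multiplicative,
$$
(\Phi^\ast)^{-1}\bigl(W+\mathbf 1_{\Phi^{-1}(V)}W\bigr)=W'+\mathbf 1_VW' .
$$
Dimensions are therefore preserved, and the F{\o}lner inequality for $W'$ follows for every $V\in\mathcal E$.

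None of these steps is a real obstacle. The only point needing care is the verification that $\Phi^\ast$ is multiplicative and sends bisections and unit-space sets to the corresponding objects. Both facts rest on $\Phi$ being simultaneously a bijective homomorphism and a homeomorphism.
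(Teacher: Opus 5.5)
Your proposal is correct and follows essentially the same argument as the paper: transport F{\o}lner subspaces along the algebra isomorphism induced by $\Phi$, which sends characteristic functions of compact open bisections to those of the corresponding bisections and anchored left ideals onto anchored left ideals, while preserving products and dimensions. The differences are cosmetic: you use the pullback $f\mapsto f\circ\Phi$ and verify the bisectional form of the definition directly, whereas the paper uses $f\mapsto f\circ\Phi^{-1}$ and phrases the transfer with a finite-dimensional multiplier space $L$.
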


\begin{proof}
The groupoid isomorphism $\Phi$ induces a $\Bbbk$-algebra isomorphism
$$
\Phi_{\#}:A_{\Bbbk}(\mathcal G) \longrightarrow A_{\Bbbk}(\mathcal H), \ \Phi_{\#}(f)=f\circ\Phi^{-1}.
$$
For every compact open bisection $U\subseteq\mathcal G$,
$$
\Phi_{\#}(\mathbf 1_U)=\mathbf 1_{\Phi(U)}.
$$
In particular, if $C\subseteq\mathcal G^{(0)}$ is compact and open, then
$$
\Phi_{\#}\left(A_{\Bbbk}(\mathcal G)\mathbf 1_C\right)= A_{\Bbbk}(\mathcal H)\mathbf 1_{\Phi(C)}.
$$
Moreover, $\Phi_{\#}$ preserves products, finite-dimensionality and vector-space dimension. Therefore, the image under $\Phi_{\#}$ of a source-anchored F{\o}lner subspace for $(C,L,\varepsilon)$ is a source-anchored F{\o}lner subspace for
$$
\left(\Phi(C),\Phi_{\#}(L),\varepsilon\right).
$$
Applying the same argument to $\Phi^{-1}$ proves the converse.
\end{proof}

\subsection{Open invariant reductions and subgroupoids}

For the purposes of this paper, we use the following terminology.

\begin{definition}\label{source-saturated-subgroupoid}
Let $\mathcal H\subseteq\mathcal G$ be a subgroupoid and put $Y=\mathcal H^{(0)}$. We say that $\mathcal H$ is \emph{source-saturated in $\mathcal G$} if
$$
s_{\mathcal G}^{-1}(Y)\subseteq\mathcal H.
$$
\end{definition}

Since every arrow of $\mathcal H$ has source and range in $Y$, one always has
$$
\mathcal H\subseteq s_{\mathcal G}^{-1}(Y)\cap r_{\mathcal G}^{-1}(Y).
$$
The source-saturated condition is therefore very restrictive, and it has a simple intrinsic description.

\begin{lemma}\label{source-saturated-reduction}
Let $\mathcal H\subseteq\mathcal G$ be a subgroupoid and put $Y=\mathcal H^{(0)}$. The following conditions are equivalent:
\begin{enumerate}
    \item $\mathcal H$ is source-saturated in $\mathcal G$;
    \item $Y$ is invariant in $\mathcal G$ and
    $$
    \mathcal H=\mathcal G|_Y.
    $$
\end{enumerate}
\end{lemma}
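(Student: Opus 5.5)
We prove the two implications directly from the definitions. Throughout we use that $\mathcal H$ is a subgroupoid. It therefore contains the unit of every arrow it contains and is closed under inversion and composition. Its unit space is $Y=\mathcal H^{(0)}\subseteq\mathcal G^{(0)}$, and every arrow of $\mathcal H$ has source and range in $Y$.

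For $(1)\Rightarrow(2)$, we first show that $Y$ is invariant. Let $\gamma\in\mathcal G$. If $s(\gamma)\in Y$, then $\gamma\in s_{\mathcal G}^{-1}(Y)\subseteq\mathcal H$, so $r(\gamma)=r_{\mathcal H}(\gamma)\in Y$. Conversely, suppose $r(\gamma)\in Y$. Then $s(\gamma^{-1})=r(\gamma)\in Y$, so $\gamma^{-1}\in\mathcal H$ by source-saturation. Closure under inversion gives $\gamma\in\mathcal H$, and hence $s(\gamma)\in Y$. Next we identify $\mathcal H$ with the reduction. The inclusion $\mathcal H\subseteq\mathcal G|_Y$ is the general observation recorded just before the lemma. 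For the reverse inclusion, an arrow of $\mathcal G|_Y$ has source in $Y$, so it lies in $s_{\mathcal G}^{-1}(Y)\subseteq\mathcal H$.

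For $(2)\Rightarrow(1)$, let $\gamma\in s_{\mathcal G}^{-1}(Y)$. Invariance of $Y$ gives $r(\gamma)\in Y$, so $\gamma\in r^{-1}(Y)\cap s^{-1}(Y)=\mathcal G|_Y=\mathcal H$. This is exactly source-saturation.

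No step is a genuine obstacle. The only point needing care is the converse direction of invariance, $r(\gamma)\in Y\Rightarrow s(\gamma)\in Y$. Source-saturation alone speaks only about sources, so this direction requires passing to $\gamma^{-1}$ and using closure of $\mathcal H$ under inversion. One should also check that the units of $\mathcal H$ coincide with units of $\mathcal G$ lying in $Y$, so that $s_{\mathcal H}=s_{\mathcal G}|_{\mathcal H}$. This holds because, for $\gamma\in\mathcal H$, the unit $\gamma^{-1}\gamma$ is the same element whether it is computed in $\mathcal H$ or in $\mathcal G$.
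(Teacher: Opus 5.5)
Your proof is correct and follows the paper's argument essentially step for step. This includes passing to $\gamma^{-1}$ and using closure under inversion for the implication $r(\gamma)\in Y\Rightarrow s(\gamma)\in Y$, and reading off the reverse inclusion $\mathcal G|_Y\subseteq\mathcal H$ directly from source-saturation.
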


\begin{proof}
Assume first that $\mathcal H$ is source-saturated. Let $\gamma\in\mathcal G$ satisfy $s(\gamma)\in Y$. Then $\gamma\in\mathcal H$, and hence $r(\gamma)\in\mathcal H^{(0)}=Y$. Conversely, if $r(\gamma)\in Y$, then
$$
s(\gamma^{-1})=r(\gamma)\in Y.
$$
Source saturation gives $\gamma^{-1}\in\mathcal H$, and therefore $\gamma\in\mathcal H$ and $s(\gamma)\in Y$. Thus
$$
s(\gamma)\in Y\ \Longleftrightarrow\ r(\gamma)\in Y,
$$
so $Y$ is invariant.

Every arrow of $\mathcal H$ has source and range in $Y$, and hence
$$
\mathcal H\subseteq\mathcal G|_Y.
$$
Conversely, if $\gamma\in\mathcal G|_Y$, then $s(\gamma)\in Y$, so source saturation implies $\gamma\in\mathcal H$. Therefore
$$
\mathcal H=\mathcal G|_Y.
$$

The converse is immediate. If $Y$ is invariant and $\mathcal H=\mathcal G|_Y$, then every $\gamma\in\mathcal G$ with $s(\gamma)\in Y$ also satisfies $r(\gamma)\in Y$, and hence $\gamma\in\mathcal G|_Y=\mathcal H$.
\end{proof}

\begin{lemma}\label{open-subgroupoid-extension-by-zero}
Let $\mathcal H$ be an open subgroupoid of a Hausdorff ample groupoid $\mathcal G$, equipped with the subspace topology. Extension by zero defines an injective algebra homomorphism
$$
j:A_{\Bbbk}(\mathcal H)\longrightarrow A_{\Bbbk}(\mathcal G).
$$
If $Y=\mathcal H^{(0)}$ is invariant in $\mathcal G$ and $\mathcal H=\mathcal G|_Y$, then, for every compact open $C\subseteq Y$,
$$
j\bigl(A_{\Bbbk}(\mathcal H)\mathbf 1_C\bigr)=A_{\Bbbk}(\mathcal G)\mathbf 1_C.
$$
\end{lemma}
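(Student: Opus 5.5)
The plan has two parts. First I show that extension by zero lands in $A_{\Bbbk}(\mathcal G)$ and is a multiplicative embedding. Then, in the invariant case, I compare the two cut-down ideals using the support description in Proposition~\ref{unit-space-cut-downs}.

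\textbf{Well-definedness and injectivity.} For $f\in A_{\Bbbk}(\mathcal H)$ let $j(f)$ agree with $f$ on $\mathcal H$ and vanish on $\mathcal G\setminus\mathcal H$. Since $\mathcal H$ is open, $\mathcal H$ is itself a Hausdorff ample groupoid. Its compact open bisections are exactly the compact open bisections of $\mathcal G$ contained in $\mathcal H$. Compactness is intrinsic, openness transfers because $\mathcal H$ is open, and injectivity of $r,s$ is inherited. Hence $A_{\Bbbk}(\mathcal H)$ is spanned by $\mathbf 1_U$ with $U\in\operatorname{Bis}_c(\mathcal G)$ and $U\subseteq\mathcal H$. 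On these generators $j(\mathbf 1_U)=\mathbf 1_U\in A_{\Bbbk}(\mathcal G)$, so $j$ is a well-defined linear map. It is injective because restriction to $\mathcal H$ recovers $f$. For multiplicativity it suffices to check generators: the product set $UV$ computed in $\mathcal H$ equals the one computed in $\mathcal G$, since composable pairs in $\mathcal H$ are composable in $\mathcal G$ with the same product. Both convolutions therefore give $\mathbf 1_{UV}$.

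Alternatively, one can argue pointwise. For $\gamma\in\mathcal H$, any factorization $\gamma=\alpha\beta$ with $j(f)(\alpha)j(g)(\beta)\neq0$ has $\alpha,\beta\in\mathcal H$. For $\gamma\notin\mathcal H$, any such factorization would force $\gamma\in\mathcal H$ because $\mathcal H$ is a subgroupoid, so the sum is empty.

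\textbf{Identification of ideals.} Assume $Y$ is invariant, $\mathcal H=\mathcal G|_Y$, and $C\subseteq Y$ is compact open. Compact open subsets of $Y$ are compact open in $\mathcal G^{(0)}$, since $Y=\mathcal H\cap\mathcal G^{(0)}$ is open, so $\mathbf 1_C$ makes sense in both algebras and $j(\mathbf 1_C)=\mathbf 1_C$. Multiplicativity of $j$ then gives the inclusion $j(A_{\Bbbk}(\mathcal H)\mathbf 1_C)\subseteq A_{\Bbbk}(\mathcal G)\mathbf 1_C$.

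For the reverse inclusion, take $g\in A_{\Bbbk}(\mathcal G)\mathbf 1_C$. By Proposition~\ref{unit-space-cut-downs}, $\operatorname{supp}(g)\subseteq s^{-1}(C)\subseteq s^{-1}(Y)$. Invariance of $Y$ gives $s^{-1}(Y)=\mathcal G|_Y=\mathcal H$. So $\operatorname{supp}(g)$ is a compact subset of the open set $\mathcal H$, and $f=g|_{\mathcal H}$ is locally constant and compactly supported in $\mathcal H$, with $j(f)=g$. Its support lies in $s_{\mathcal H}^{-1}(C)$, so $f\in A_{\Bbbk}(\mathcal H)\mathbf 1_C$ by Proposition~\ref{unit-space-cut-downs} applied to $\mathcal H$.

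\textbf{Main obstacle.} The only delicate step is confirming that $g|_{\mathcal H}$ has compact support in the subspace topology and that the support sets agree. This holds because, for locally constant functions, the support is the clopen nonvanishing set, which here lies entirely inside $\mathcal H$.
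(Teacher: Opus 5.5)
Your proof is correct and follows essentially the same route as the paper: extension by zero, a pointwise check that every non-zero convolution summand has both factors in $\mathcal H$, and, for the ideals, the observation that invariance gives $s^{-1}(C)\subseteq s^{-1}(Y)=\mathcal H$, so that restriction to $\mathcal H$ inverts $j$ on $A_{\Bbbk}(\mathcal G)\mathbf 1_C$. The only differences are cosmetic. You verify $j(f)\in A_{\Bbbk}(\mathcal G)$ through the spanning family of compact open bisections of $\mathcal H$, whereas the paper checks local constancy and closedness of the support directly, and you obtain the forward inclusion of ideals from $j(\mathbf 1_C)=\mathbf 1_C$ and multiplicativity.
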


\begin{proof}
For $f\in A_{\Bbbk}(\mathcal H)$, define $j(f)$ to agree with $f$ on $\mathcal H$ and to vanish elsewhere. Its support is compact in $\mathcal G$, since the inclusion of $\mathcal H$ is continuous. It is also closed in $\mathcal G$, since $\mathcal G$ is Hausdorff. Local constancy holds on the open set $\mathcal H$, and every point outside $\mathcal H$ has an open neighborhood disjoint from this support. Thus $j(f)$ is locally constant and compactly supported.

A non-zero summand in $(j(f)\ast j(g))(\gamma)$ has both factors in $\mathcal H$. Their product belongs to $\mathcal H$, since $\mathcal H$ is a subgroupoid. For $\gamma\in\mathcal H$, the remaining summands are exactly those defining $(f\ast g)(\gamma)$; for $\gamma\notin\mathcal H$, there are none. Hence $j(f)\ast j(g)=j(f\ast g)$. Linearity and injectivity are immediate.

Under the additional hypotheses, every arrow with source in $C$ has range in $Y$ and therefore belongs to $\mathcal H$. Proposition~\ref{unit-space-cut-downs} gives the asserted equality of left ideals. For the reverse inclusion in that equality, restriction of a function supported in $s^{-1}(C)$ to $\mathcal H$ remains locally constant and compactly supported.
\end{proof}

The next proposition is the basic subgroupoid permanence result for our definition. Notice that openness, rather than clopenness, is sufficient.

\begin{proposition}\label{open-invariant-reduction}
Let $\mathcal G$ be an algebraically amenable Hausdorff ample groupoid, and let $Y\subseteq\mathcal G^{(0)}$ be non-empty, open and invariant. Then the reduction $\mathcal G|_Y$ is algebraically amenable over $\Bbbk$.
\end{proposition}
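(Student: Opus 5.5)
The plan is to transport Følner subspaces from $\mathcal G$ into $\mathcal H:=\mathcal G|_Y$ by means of the extension-by-zero embedding $j$ of Lemma~\ref{open-subgroupoid-extension-by-zero}.

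First we check the setup. Since $Y$ is open in $\mathcal G^{(0)}$, and $s$ and $r$ are continuous, the set $\mathcal H=s^{-1}(Y)\cap r^{-1}(Y)$ is an open subgroupoid of $\mathcal G$. With the subspace topology it is a Hausdorff ample groupoid: a compact open bisection of $\mathcal G$ contained in the open set $\mathcal H$ is a compact open bisection of $\mathcal H$, and these form a basis. Moreover $\mathcal H^{(0)}=Y$, which is invariant. Compact open subsets of $Y$ are exactly the compact open subsets of $\mathcal G^{(0)}$ contained in $Y$, and likewise $\operatorname{Bis}_c(\mathcal H)=\{U\in\operatorname{Bis}_c(\mathcal G)\mid U\subseteq\mathcal H\}$. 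Under $j$ we have $j(\mathbf 1_U)=\mathbf 1_U$.

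Now fix a non-empty compact open $C\subseteq Y$, a finite $\mathcal E\subseteq\operatorname{Bis}_c(\mathcal H)$ and $\varepsilon>0$. Regard $C$ as a compact open subset of $\mathcal G^{(0)}$ and $\mathcal E$ as a subset of $\operatorname{Bis}_c(\mathcal G)$. Algebraic amenability of $\mathcal G$ then yields a non-zero finite-dimensional
$$
W\subseteq A_{\Bbbk}(\mathcal G)\mathbf 1_C
$$
with $\dim(W+\mathbf 1_UW)\le(1+\varepsilon)\dim W$ for every $U\in\mathcal E$.

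The key step is that $W$ already lives in the image of $j$. By Lemma~\ref{open-subgroupoid-extension-by-zero}, invariance of $Y$ gives
$$
j\bigl(A_{\Bbbk}(\mathcal H)\mathbf 1_C\bigr)=A_{\Bbbk}(\mathcal G)\mathbf 1_C.
$$
Concretely, every arrow with source in $C$ has range in $Y$ and so lies in $\mathcal H$; this is exactly where invariance is used, and without it the Følner subspace could leak out of $\mathcal H$ in the range direction. Set $W'=j^{-1}(W)\subseteq A_{\Bbbk}(\mathcal H)\mathbf 1_C$. Since $j$ is an injective algebra homomorphism, $\dim W'=\dim W$ and
$$
j\bigl(W'+\mathbf 1_UW'\bigr)=W+\mathbf 1_UW.
$$
Hence $\dim(W'+\mathbf 1_UW')\le(1+\varepsilon)\dim W'$ for every $U\in\mathcal E$, so $W'$ is a source-anchored $(C,\mathcal E,\varepsilon)$-bisectional Følner subspace for $\mathcal H$.

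The only delicate point is the topological bookkeeping: we must confirm that compactness and openness of $C$ and of bisections agree in $\mathcal H$ and in $\mathcal G$. This holds because $\mathcal H$ is open, compactness is intrinsic, and openness in an open subspace coincides with openness in the ambient space.
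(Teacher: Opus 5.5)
Your proof is correct and follows the paper's argument. Invariance of the open set $Y$ gives $s_{\mathcal G}^{-1}(C)=s_{\mathcal H}^{-1}(C)$, hence $A_{\Bbbk}(\mathcal G)\mathbf 1_C=A_{\Bbbk}(\mathcal H)\mathbf 1_C$ up to extension by zero, so the F{\o}lner subspace supplied by $\mathcal G$ already works for $\mathcal H$; the only cosmetic difference is that you test against finite families of bisections of $\mathcal H$, while the paper tests against a finite-dimensional multiplier space $L$ via Proposition~\ref{anchored-finite-dimensional-test}.
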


\begin{proof}
Set
$$
\mathcal H=\mathcal G|_Y.
$$
Since $Y$ is open and invariant, $\mathcal H$ is an open Hausdorff ample subgroupoid of $\mathcal G$. Extension by zero identifies $A_{\Bbbk}(\mathcal H)$ with a subalgebra of $A_{\Bbbk}(\mathcal G)$.

Let $C\subseteq\mathcal H^{(0)}=Y$ be non-empty, compact and open in $Y$. Because $Y$ is open in $\mathcal G^{(0)}$, the set $C$ is also open in $\mathcal G^{(0)}$. It is compact in $\mathcal G^{(0)}$ because the inclusion $Y\hookrightarrow\mathcal G^{(0)}$ is continuous. Hence $C$ is a non-empty compact open subset of $\mathcal G^{(0)}$.

Let $L\subseteq A_{\Bbbk}(\mathcal H)$ be finite-dimensional and let $\varepsilon>0$. We regard $L$ as a subspace of $A_{\Bbbk}(\mathcal G)$ by extension by zero. Since $Y$ is invariant and $C\subseteq Y$, every arrow of $\mathcal G$ with source in $C$ belongs to $\mathcal H$. Consequently,
$$
s_{\mathcal G}^{-1}(C)=s_{\mathcal H}^{-1}(C).
$$
Using Proposition~\ref{unit-space-cut-downs}, we obtain
$$
A_{\Bbbk}(\mathcal G)\mathbf 1_C=A_{\Bbbk}(\mathcal H)\mathbf 1_C.
$$

Since $\mathcal G$ is algebraically amenable, Proposition~\ref{anchored-finite-dimensional-test} gives a non-zero finite-dimensional subspace
$$
W\subseteq A_{\Bbbk}(\mathcal G)\mathbf 1_C
$$
such that
$$
\dim_{\Bbbk}(W+LW)\leq(1+\varepsilon)\dim_{\Bbbk}(W).
$$
The equality of the anchored ideals shows that
$$
W\subseteq A_{\Bbbk}(\mathcal H)\mathbf 1_C.
$$
Thus the same subspace $W$ verifies the source-anchored F{\o}lner condition for $\mathcal H$. Therefore $\mathcal H$ is algebraically amenable.
\end{proof}

\begin{corollary}\label{source-saturated-subgroupoid-col}
Let $\mathcal H$ be a non-empty open source-saturated subgroupoid of an algebraically amenable Hausdorff ample groupoid $\mathcal G$. Then $\mathcal H$ is algebraically amenable over $\Bbbk$.
\end{corollary}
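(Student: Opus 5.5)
The plan is to reduce Corollary~\ref{source-saturated-subgroupoid-col} to Proposition~\ref{open-invariant-reduction} via Lemma~\ref{source-saturated-reduction}. Put $Y=\mathcal H^{(0)}$. Since $\mathcal H$ is source-saturated, Lemma~\ref{source-saturated-reduction} shows that $Y$ is invariant in $\mathcal G$ and that $\mathcal H=\mathcal G|_Y$. Because $\mathcal H$ is non-empty, $Y$ is non-empty: any arrow $\gamma\in\mathcal H$ gives $s(\gamma)\in Y$.

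The only remaining hypothesis of Proposition~\ref{open-invariant-reduction} is that $Y$ is open in $\mathcal G^{(0)}$, and this is the one step needing an argument. I would write
$$
Y=\mathcal H\cap\mathcal G^{(0)}.
$$
For this equality, note that a unit $u\in\mathcal G^{(0)}$ lying in the subgroupoid $\mathcal H$ satisfies $u=s(u)\in\mathcal H^{(0)}$. Conversely, each element of $Y$ is a unit arrow of $\mathcal H$. Since $\mathcal H$ is open in $\mathcal G$, the intersection $\mathcal H\cap\mathcal G^{(0)}$ is open in the subspace $\mathcal G^{(0)}$. Alternatively, $Y=s_{\mathcal G}(\mathcal H)$, and $s$ is an open map for étale groupoids by the proposition recalled in Section~\ref{preliminaries}; either route gives openness.

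Proposition~\ref{open-invariant-reduction} then applies to the non-empty open invariant set $Y$, so $\mathcal G|_Y$ is algebraically amenable over $\Bbbk$. Two compatibility checks remain. First, the topology of $\mathcal H$ in the statement is the subspace topology, which is the topology used for the reduction in that proposition. Second, the reduction $\mathcal G|_Y$ is identically $\mathcal H$ as a topological groupoid, so no appeal to Proposition~\ref{isomorphism-invariance} is needed. This gives the conclusion.

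I expect no real obstacle. The only point needing care is the openness of $Y$, where one must use the openness of $\mathcal H$ and not merely the invariance of $Y$.
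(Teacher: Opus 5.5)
Your proof is correct and follows the paper's route: Lemma~\ref{source-saturated-reduction} gives $Y=\mathcal H^{(0)}$ invariant with $\mathcal H=\mathcal G|_Y$, and then Proposition~\ref{open-invariant-reduction} applies. Your explicit check that $Y=\mathcal H\cap\mathcal G^{(0)}$ (equivalently $s(\mathcal H)$) is open in $\mathcal G^{(0)}$ supplies a step that the paper only asserts when it calls $Y$ an open invariant subset.
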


\begin{proof}
By Lemma~\ref{source-saturated-reduction}, there is an open invariant subset $Y\subseteq\mathcal G^{(0)}$ such that
$$
\mathcal H=\mathcal G|_Y.
$$
The conclusion follows from Proposition~\ref{open-invariant-reduction}.
\end{proof}

The first counterexample separates the weak and source-anchored notions introduced in Section~\ref{algebraic-folner}.

\begin{proposition}\label{weak-not-anchored-counterexample}
There exists a Hausdorff ample groupoid that is weakly left bisectionally algebraically amenable over $\Bbbk$ but is not source-anchored left algebraically amenable over $\Bbbk$.
\end{proposition}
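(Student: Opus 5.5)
Take $\mathcal G=\{e\}\sqcup F_2$, the disjoint union of the trivial group and the free group $F_2$, each regarded as a one-unit discrete groupoid. Then $\mathcal G$ is a Hausdorff ample groupoid with unit space $\{u_1,u_2\}$, where $u_1$ is the unit of the trivial component and $u_2=e_{F_2}$. Since there are no arrows between the two components, its compact open bisections are finite sets of the form $U_1\sqcup U_2$, where $U_1\subseteq\{u_1\}$ and $U_2$ is a singleton or empty subset of $F_2$. Extension by zero (Lemma~\ref{open-subgroupoid-extension-by-zero}) identifies
$$
A_{\Bbbk}(\mathcal G)\cong\Bbbk\times\Bbbk F_2,
$$
with the two factors the Steinberg algebras of the two components.

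For weak amenability I will use the observation of Remark~\ref{proper-algebraic-amenability}: take $W=\operatorname{span}_{\Bbbk}\{\mathbf 1_{\{u_1\}}\}$. For every compact open bisection $U$, the product $\mathbf 1_U\ast\mathbf 1_{\{u_1\}}=\mathbf 1_{U\{u_1\}}$ equals either $\mathbf 1_{\{u_1\}}$ or $0$, since $U\{u_1\}\subseteq\{u_1\}$. Hence
$$
W+\mathbf 1_UW=W
$$
for every $U\in\operatorname{Bis}_c(\mathcal G)$, so $W$ is a weak $(\mathcal E,\varepsilon)$-bisectional F{\o}lner subspace for every finite $\mathcal E$ and every $\varepsilon>0$. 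Thus $\mathcal G$ is weakly left bisectionally algebraically amenable, or equivalently, by Proposition~\ref{weak-bisectional-equivalence}, $A_{\Bbbk}(\mathcal G)$ is left algebraically amenable.

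For the failure of source anchoring, I will take the anchor $C=\{u_2\}$, which is compact and open. Suppose, for contradiction, that $\mathcal G$ were source-anchored algebraically amenable. The set $Y=\{u_2\}$ is open and invariant, and $\mathcal G|_Y=F_2$. Proposition~\ref{open-invariant-reduction} would then make the one-unit groupoid $F_2$ source-anchored algebraically amenable. By Theorem~\ref{group-case}, $F_2$ would be amenable, which contradicts the non-amenability of $F_2$ recorded in the preliminaries.

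Alternatively, this step can be seen directly. We have $A_{\Bbbk}(\mathcal G)\mathbf 1_{\{u_2\}}=\Bbbk F_2$, placed in the second factor, and multiplying by bisections contained in $F_2$ is exactly left multiplication in $\Bbbk F_2$. So an anchored F{\o}lner subspace for these bisections would be a F{\o}lner subspace for $\Bbbk F_2$, which would contradict Bartholdi's criterion (Theorem~\ref{bartholdi-group-algebra}).

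The main point requiring care is the identification of the anchored ideal and of the bisection multipliers with $\Bbbk F_2$ inside the product algebra. This is handled by Lemma~\ref{open-subgroupoid-extension-by-zero}, since each component is the reduction to an open invariant subset of the unit space. The rest is routine.
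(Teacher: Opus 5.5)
Your proposal is correct and follows essentially the same route as the paper, which also takes the disjoint union of $F_2$ with the trivial one-unit groupoid. The paper likewise uses the span of the characteristic function of the trivial unit to witness weak amenability, and rules out source anchoring by passing to the open invariant $F_2$-component via Proposition~\ref{open-invariant-reduction} and Theorem~\ref{group-case}. Your alternative direct argument through Bartholdi's criterion is the one the paper gives later in Example~\ref{ex:weak-versus-anchored}.
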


\begin{proof}
Let $F_2$ be the free group on two generators, regarded as a one-unit discrete groupoid, and let $\mathcal T$ be the trivial groupoid with one unit $u$. Consider the topological disjoint union
$$
\mathcal G=F_2\sqcup\mathcal T.
$$
This is a Hausdorff ample groupoid.

Set
$$
W=\operatorname{span}_{\Bbbk}\{\mathbf 1_{\{u\}}\}.
$$
Let $U\in\operatorname{Bis}_c(\mathcal G)$. The intersection $U\cap\mathcal T$ is either empty or equal to $\{u\}$. Hence
$$
\mathbf 1_UW=\begin{cases}\{0\}, & U\cap\mathcal T=\varnothing,\\ W, & U\cap\mathcal T=\{u\}.\end{cases}
$$
Therefore
$$
W+\mathbf 1_UW=W
$$
for every $U\in\operatorname{Bis}_c(\mathcal G)$. The same one-dimensional subspace works for every finite family of compact open bisections and every $\varepsilon>0$. Thus $\mathcal G$ is weakly left bisectionally algebraically amenable.

Suppose, toward a contradiction, that $\mathcal G$ were source-anchored left algebraically amenable. The unit space of the $F_2$-component is an open invariant subset of $\mathcal G^{(0)}$. Proposition~\ref{open-invariant-reduction} would imply that the reduction to this component, namely the one-unit groupoid $F_2$, is source-anchored left algebraically amenable. By Theorem~\ref{group-case}, this would imply that $F_2$ is an amenable group, contradicting the classical non-amenability of $F_2$. Hence $\mathcal G$ is not source-anchored left algebraically amenable.
\end{proof}

Source anchoring prevents a F{\o}lner subspace from moving to a different component. It does not, however, force the subspace to remain inside an arbitrary subgroupoid with the same anchored source region. The following example is the fundamental obstruction.

\begin{proposition}\label{arbitrary-subgroupoids-fail}
Source-anchored left algebraic amenability is not inherited by arbitrary Hausdorff ample subgroupoids. In fact, there exists a countable discrete algebraically amenable groupoid containing an open subgroupoid isomorphic to the non-amenable group $F_2$.
\end{proposition}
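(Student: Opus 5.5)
The plan is to use the range escape mechanism from the introduction on the simplest transitive groupoid with infinitely many units whose isotropy is $F_2$. Take $\mathcal G=F_2\times(\mathbb N\times\mathbb N)$, the product of the one-unit groupoid $F_2$ with the pair groupoid on $\mathbb N$, with the discrete topology. Its arrows are triples $(g,m,n)$ with $s(g,m,n)=n$, $r(g,m,n)=m$, and $(g,m,n)(h,n,p)=(gh,m,p)$. It is countable, discrete, transitive, and every isotropy group is $F_2$. The open subgroupoid $\mathcal H=\{(g,0,0)\mid g\in F_2\}$ (open since everything is discrete) is isomorphic to $F_2$, and by Theorem~\ref{group-case} (with Proposition~\ref{isomorphism-invariance}) it is not algebraically amenable. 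It remains to show that $\mathcal G$ is.

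In the discrete setting, compact open bisections are finite bisections, compact open subsets of $\mathcal G^{(0)}=\mathbb N$ are finite non-empty sets $C$, and $A_{\Bbbk}(\mathcal G)$ has basis $\{\delta_\gamma\}$ with $\delta_\alpha\ast\delta_\beta=\delta_{\alpha\beta}$ when composable and $0$ otherwise. By Proposition~\ref{anchored-finite-dimensional-test} it suffices to treat a finite family $\mathcal E$ of finite bisections. Let $N$ be larger than every integer that occurs as a source or range of an arrow in $\bigcup\mathcal E$. Fix $c\in C$ and set
$$
W=\operatorname{span}_{\Bbbk}\{\delta_{(e,N,c)}\}\subseteq A_{\Bbbk}(\mathcal G)\mathbf 1_C .
$$
For $U\in\mathcal E$, the product $\mathbf 1_U\ast\delta_{(e,N,c)}$ is a sum of $\delta_{\alpha(e,N,c)}$ over $\alpha\in U$ with $s(\alpha)=N$. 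There are no such $\alpha$, so $\mathbf 1_UW=0$ and $W+\mathbf 1_UW=W$. This gives the F{\o}lner inequality with any $\varepsilon$.

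The only delicate point is making sure nothing hidden is violated. The subspace $W$ must lie in the anchored ideal, which holds because its source is $c\in C$; Proposition~\ref{unit-space-cut-downs} confirms this. The range must escape the finite region seen by $\mathcal E$, which holds because $\mathbb N$ is infinite. No bound on $\dim W$ or any exhaustion is required. The separate claim that $\mathcal H$ is a Hausdorff ample subgroupoid is immediate, since every discrete groupoid is Hausdorff and ample. So I expect no real obstacle beyond choosing the example carefully.
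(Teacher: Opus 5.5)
Your proposal is correct and follows essentially the paper's proof. Your groupoid $F_2\times(\mathbb N\times\mathbb N)$ is the paper's $\mathbb N\times F_2\times\mathbb N$ with the coordinates reordered, and you use the same range-escape subspace (spanned by one arrow from $c\in C$ to a unit outside the finite source set of the multipliers) and the same one-unit isotropy subgroupoid; the only cosmetic difference is that you test finite families of bisections, which is literally Definition~\ref{anchored-algebraic-amenability}, whereas the paper tests a finite-dimensional multiplier space $L$.
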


\begin{proof}
Let $I=\mathbb N$ and let $\Gamma=F_2$. Consider the discrete groupoid
$$
\mathcal G=I\times\Gamma\times I.
$$
Its unit space is identified with $I$ through
$$
i\longleftrightarrow(i,e_{\Gamma},i).
$$
The range and source maps are
$$
r(i,g,j)=i, \ s(i,g,j)=j,
$$
and the multiplication and inversion are given by
$$
(i,g,j)(j,h,k)=(i,gh,k) \text{ and }(i,g,j)^{-1}=(j,g^{-1},i).
$$
Since $\mathcal G$ is discrete, it is Hausdorff and ample, and every compact subset of $\mathcal G$ is finite.

We first prove that $\mathcal G$ is source-anchored left algebraically amenable. Let $C\subseteq\mathcal G^{(0)}=I$ be non-empty, compact and open. Thus $C$ is a non-empty finite subset of $I$. Let $L\subseteq A_{\Bbbk}(\mathcal G)$ be finite-dimensional and let $\varepsilon>0$.

Choose a basis $f_1,\dots,f_n$ of $L$, where the list is allowed to be empty when $L=\{0\}$. Since every $f_j$ has finite support, the set
$$
S=\bigcup_{j=1}^n\operatorname{supp}(f_j)
$$
is finite. Hence
$$
s(S)=\{s(\alpha)\mid\alpha\in S\}
$$
is a finite subset of $I$. Choose
$$
i_0\in I\setminus s(S)
$$
and choose $c_0\in C$. Set
$$
\beta=(i_0,e_{\Gamma},c_0)
$$
and
$$
W=\operatorname{span}_{\Bbbk}\{\mathbf 1_{\{\beta\}}\}.
$$
Because $s(\beta)=c_0\in C$, Proposition~\ref{unit-space-cut-downs} gives
$$
W\subseteq A_{\Bbbk}(\mathcal G)\mathbf 1_C.
$$

We claim that $LW=\{0\}$. Indeed, if $\alpha\in S$, then the product $\alpha\beta$ can be defined only if
$$
s(\alpha)=r(\beta)=i_0.
$$
This is impossible because $i_0\notin s(S)$. Therefore
$$
f_j\ast\mathbf 1_{\{\beta\}}=0
$$
for every $j$, and hence $LW=\{0\}$. It follows that
$$
W+LW=W
$$
and consequently
$$
\dim_{\Bbbk}(W+LW)=\dim_{\Bbbk}(W)\leq(1+\varepsilon)\dim_{\Bbbk}(W).
$$
Thus $\mathcal G$ is source-anchored left algebraically amenable.

Fix $u\in I$ and consider the isotropy subgroupoid
$$
\mathcal H=\{u\}\times\Gamma\times\{u\}.
$$
Since $\mathcal G$ is discrete, $\mathcal H$ is an open Hausdorff ample subgroupoid. It is a one-unit groupoid naturally isomorphic to $\Gamma=F_2$. By Theorem~\ref{group-case}, $\mathcal H$ is source-anchored left algebraically amenable if and only if $F_2$ is amenable. Since $F_2$ is not amenable, $\mathcal H$ is not source-anchored left algebraically amenable.
\end{proof}

\begin{remark}\label{range-escape}
Proposition~\ref{arbitrary-subgroupoids-fail} shows precisely what source anchoring controls. The condition
$$
W\subseteq A_{\Bbbk}(\mathcal G)\mathbf 1_C
$$
forces the source of the support of $W$ to lie in $C$, but it does not constrain its range. In the preceding example, the vector $\mathbf 1_{\{\beta\}}$ starts at $c_0\in C$ and ends at a unit $i_0$ outside the finite source set detected by $L$. This \emph{range escape} makes the F{\o}lner boundary vanish. An arbitrary subgroupoid need not contain the escaping arrow $\beta$. By contrast, an open invariant reduction contains every arrow whose source belongs to its unit space, which is exactly why Proposition~\ref{open-invariant-reduction} holds.
\end{remark}

\subsection{The one-unit permanence properties}

The group case remains fully hereditary, as it should.

\begin{corollary}\label{one-unit-permanence}
Within the class of countable one-unit groupoids, source-anchored left algebraic amenability is preserved under the following operations:
\begin{enumerate}
    \item passing to subgroupoids induced by subgroups;
    \item passing to quotient groups;
    \item taking extensions by amenable normal subgroups;
    \item taking finite direct products;
    \item taking directed unions of subgroups;
    \item taking countable direct sums of groups.
\end{enumerate}
\end{corollary}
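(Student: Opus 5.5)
The plan is to reduce everything to classical group amenability through Theorem~\ref{group-case}, and then to quote the classical permanence results for amenable groups recalled in Section~\ref{preliminaries}. For a countable group $G$, Theorem~\ref{group-case} shows that the one-unit groupoid of $G$ is source-anchored left algebraically amenable over $\Bbbk$ if and only if $G$ is amenable. Each item is then a statement about amenable groups, once we check that the operation in question stays inside the class of countable one-unit groupoids.

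In order, the items translate as follows.
\begin{enumerate}
\item A subgroup $H\leq G$ gives a one-unit subgroupoid $H\subseteq G$, which is countable and discrete. If $G$ is amenable, so is $H$, and Theorem~\ref{group-case} applied to $H$ gives the claim.
\item For quotients we use that $G/N$ is amenable whenever $G$ is.
\item For extensions, if $N\trianglelefteq G$ and both $N$ and $G/N$ are amenable, then $G$ is amenable. Here the hypothesis ``by amenable normal subgroups'' supplies the amenability of $N$, and the groupoid hypothesis on the quotient supplies the amenability of $G/N$ via Theorem~\ref{group-case}.
\item Finite direct products of amenable groups are amenable.
\item For directed unions we invoke the directed-union item directly, noting that a directed union of countably many countable subgroups is countable. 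Alternatively, we can use the characterization of a countable group as amenable exactly when all its finitely generated subgroups are: a finitely generated subgroup of the union lies in some member of the directed family.
\end{enumerate}

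The only item that is not a verbatim citation is (6), and this is the step I expect to need a short argument. I will write the countable direct sum $\bigoplus_{n}G_n$ as the directed union of the finite partial sums $G_1\times\cdots\times G_m$. Each partial sum is amenable by item (4) and is a subgroup of the direct sum. Item (5) then gives amenability of the whole sum, which is countable as a countable union of countable sets. The conclusion then follows from Theorem~\ref{group-case}.

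One should also remark that all the constructions yield countable discrete groups, so that Theorem~\ref{group-case}, which is stated for countable groups, applies in every case. Since the equivalence in Theorem~\ref{group-case} holds over any fixed field, the field $\Bbbk$ plays no role.
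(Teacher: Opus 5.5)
Your proposal is correct and follows the paper's proof: you reduce via Theorem~\ref{group-case} to amenability of the underlying countable groups and then cite the classical permanence properties. The only difference is cosmetic and occurs in item (6). The paper notes that every finitely generated subgroup of the direct sum lies in a finite direct product and applies the finitely-generated-subgroups criterion, whereas you write the sum as a directed union of finite partial products; both arguments are valid.
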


\begin{proof}
By Theorem~\ref{group-case}, a countable one-unit groupoid is source-anchored left algebraically amenable over $\Bbbk$ exactly when its underlying group is amenable. Each assertion therefore follows from the corresponding classical permanence property for amenable groups recalled in Section~\ref{preliminaries}. For the direct-sum statement, every finitely generated subgroup of a countable direct sum is contained in a finite direct product of the coordinate groups; finite products and subgroups of amenable groups are amenable.
\end{proof}

\subsection{Disjoint unions}

The source-anchored condition behaves exactly as expected under topological disjoint unions.

\begin{proposition}\label{disjoint-unions}
Let $(\mathcal G_i)_{i\in I}$ be a non-empty family of Hausdorff ample groupoids, and let
$$
\mathcal G=\bigsqcup_{i\in I}\mathcal G_i
$$
be their topological disjoint union. Then $\mathcal G$ is algebraically amenable over $\Bbbk$ if and only if every $\mathcal G_i$ is algebraically amenable over $\Bbbk$.
\end{proposition}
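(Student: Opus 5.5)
The forward direction follows from Proposition~\ref{open-invariant-reduction}. In the topological disjoint union $\mathcal G=\bigsqcup_{i}\mathcal G_i$, each $\mathcal G_i$ is clopen, so its unit space $\mathcal G_i^{(0)}$ is an open subset of $\mathcal G^{(0)}$. It is also invariant, since every arrow of $\mathcal G$ lies in a single component and so has source and range in the same $\mathcal G_i^{(0)}$. It is non-empty because all groupoids are assumed non-empty. Moreover $\mathcal G|_{\mathcal G_i^{(0)}}=\mathcal G_i$ as topological groupoids. Proposition~\ref{open-invariant-reduction} then shows that each $\mathcal G_i$ is algebraically amenable whenever $\mathcal G$ is.

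For the converse, assume every $\mathcal G_i$ is algebraically amenable and use the test of Proposition~\ref{anchored-finite-dimensional-test}. Fix a non-empty compact open $C\subseteq\mathcal G^{(0)}$, a finite-dimensional $L\subseteq A_{\Bbbk}(\mathcal G)$ and $\varepsilon>0$. Since $C$ is compact and the sets $\mathcal G_i^{(0)}$ form an open partition of $\mathcal G^{(0)}$, only finitely many $C_i=C\cap\mathcal G_i^{(0)}$ are non-empty, and each is compact open in $\mathcal G_i^{(0)}$. Choose one index $i_0$ with $C_{i_0}\neq\varnothing$. By Lemma~\ref{open-subgroupoid-extension-by-zero}, extension by zero embeds $A_{\Bbbk}(\mathcal G_{i_0})$ into $A_{\Bbbk}(\mathcal G)$. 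Multiplication by $\mathbf 1_{\mathcal G_{i_0}^{(0)}}$ gives a multiplier $f\mapsto f|_{\mathcal G_{i_0}}$; note this is only a multiplier, because $\mathcal G_{i_0}^{(0)}$ need not be compact. Set $L_{i_0}=\{f|_{\mathcal G_{i_0}}\mid f\in L\}$. This is a finite-dimensional subspace of $A_{\Bbbk}(\mathcal G_{i_0})$, since restriction to a clopen set preserves local constancy and compact support.

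The key computation is this. If $w$ is supported in $\mathcal G_{i_0}$, then $f\ast w=(f|_{\mathcal G_{i_0}})\ast w$, because a non-zero product $\alpha\beta$ with $\beta\in\mathcal G_{i_0}$ forces $\alpha\in\mathcal G_{i_0}$. So take a source-anchored F{\o}lner subspace $W\subseteq A_{\Bbbk}(\mathcal G_{i_0})\mathbf 1_{C_{i_0}}$ for $(C_{i_0},L_{i_0},\varepsilon)$ and regard it as a subspace of $A_{\Bbbk}(\mathcal G)$. Then $W+LW=W+L_{i_0}W$, which gives the dimension bound. Also $W\subseteq A_{\Bbbk}(\mathcal G)\mathbf 1_{C_{i_0}}\subseteq A_{\Bbbk}(\mathcal G)\mathbf 1_C$, because $\operatorname{supp}(w)\subseteq s^{-1}(C_{i_0})\subseteq s^{-1}(C)$ by Proposition~\ref{unit-space-cut-downs}. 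This completes the converse.

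The only point needing care is the bookkeeping that identifies the convolution in $\mathcal G$ with that in $\mathcal G_{i_0}$ when the right-hand factor lives in one component. The argument above handles it directly, and one anchored component suffices: the definition only asks for $W\subseteq A_{\Bbbk}(\mathcal G)\mathbf 1_C$, not for $W$ to meet every part of $C$. Nothing beyond this is an obstacle.
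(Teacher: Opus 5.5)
Your proposal is correct and follows essentially the same route as the paper. The forward direction uses Proposition~\ref{open-invariant-reduction} on the open invariant sets $\mathcal G_i^{(0)}$. For the converse you choose one component $i_0$ meeting $C$, replace $L$ by its restriction to $\mathcal G_{i_0}$ (the paper's coordinate projection $\pi_{i_0}$ under $A_{\Bbbk}(\mathcal G)=\bigoplus_i A_{\Bbbk}(\mathcal G_i)$), and observe that $LW=L_{i_0}W$ and $W\subseteq A_{\Bbbk}(\mathcal G)\mathbf 1_C$.
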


\begin{proof}
Assume first that $\mathcal G$ is algebraically amenable. For each $i\in I$, the unit space $\mathcal G_i^{(0)}$ is an open invariant subset of $\mathcal G^{(0)}$, and
$$
\mathcal G|_{\mathcal G_i^{(0)}}=\mathcal G_i.
$$
Therefore Proposition~\ref{open-invariant-reduction} implies that every $\mathcal G_i$ is algebraically amenable.

Conversely, suppose that every $\mathcal G_i$ is algebraically amenable. Since a compact subset of a topological disjoint union meets only finitely many components, extension by zero yields an algebraic direct-sum decomposition
$$
A_{\Bbbk}(\mathcal G)=\bigoplus_{i\in I}A_{\Bbbk}(\mathcal G_i).
$$
Let $C\subseteq\mathcal G^{(0)}$ be non-empty, compact and open, let $L\subseteq A_{\Bbbk}(\mathcal G)$ be finite-dimensional and let $\varepsilon>0$.

Choose $i_0\in I$ such that
$$
C_{i_0}=C\cap\mathcal G_{i_0}^{(0)}
$$
is non-empty. The component $\mathcal G_{i_0}^{(0)}$ is clopen in $\mathcal G^{(0)}$, so $C_{i_0}$ is compact and open in $\mathcal G_{i_0}^{(0)}$. Let
$$
\pi_{i_0}:A_{\Bbbk}(\mathcal G)\longrightarrow A_{\Bbbk}(\mathcal G_{i_0})
$$
be the coordinate projection, and put
$$
L_{i_0}=\pi_{i_0}(L).
$$
Since $\mathcal G_{i_0}$ is algebraically amenable, there exists a non-zero finite-dimensional subspace
$$
W\subseteq A_{\Bbbk}(\mathcal G_{i_0})\mathbf 1_{C_{i_0}}
$$
such that
$$
\dim_{\Bbbk}(W+L_{i_0}W)\leq(1+\varepsilon)\dim_{\Bbbk}(W).
$$

We regard $W$ as a subspace of $A_{\Bbbk}(\mathcal G)$ by extension by zero. Since there are no composable arrows belonging to different components, one has
$$
aW=\pi_{i_0}(a)W
$$
for every $a\in A_{\Bbbk}(\mathcal G)$. Consequently,
$$
LW=L_{i_0}W.
$$
Moreover, $C_{i_0}\subseteq C$ implies
$$
W\subseteq A_{\Bbbk}(\mathcal G)\mathbf 1_C.
$$
Therefore
$$
\dim_{\Bbbk}(W+LW)=\dim_{\Bbbk}(W+L_{i_0}W)\leq(1+\varepsilon)\dim_{\Bbbk}(W).
$$
Thus $\mathcal G$ is algebraically amenable.
\end{proof}

\subsection{Products}

\begin{proposition}\label{products}
Let $\mathcal G$ and $\mathcal H$ be Hausdorff ample groupoids. If both are algebraically amenable, then $\mathcal G\times\mathcal H$ is algebraically amenable over $\Bbbk$.
\end{proposition}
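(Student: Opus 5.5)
Set $\mathcal K=\mathcal G\times\mathcal H$ with the product topology. It is a Hausdorff ample groupoid with unit space $\mathcal G^{(0)}\times\mathcal H^{(0)}$, and products $U\times V$ of compact open bisections form a basis of compact open bisections. I would first record the tensor identification
$$
A_{\Bbbk}(\mathcal G)\otimes_{\Bbbk}A_{\Bbbk}(\mathcal H)\longrightarrow A_{\Bbbk}(\mathcal K),\quad f\otimes g\longmapsto \bigl((\gamma,\eta)\mapsto f(\gamma)g(\eta)\bigr).
$$
This map is an algebra isomorphism. Multiplicativity follows because composable pairs in $\mathcal K$ are exactly pairs of composable pairs. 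Surjectivity holds because every compact open subset of $\mathcal K$ is a finite disjoint union of boxes $U\times V$, and $\mathbf 1_{U\times V}=\mathbf 1_U\otimes\mathbf 1_V$. Injectivity follows from linear independence of products of functions with disjoint, refined supports; equivalently, one can use the standard fact $A_{\Bbbk}(\mathcal G\times\mathcal H)\cong A_{\Bbbk}(\mathcal G)\otimes A_{\Bbbk}(\mathcal H)$.

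Next I reduce the anchor to a box. Let $C\subseteq\mathcal K^{(0)}$ be non-empty compact open. It contains a non-empty box $C_1\times C_2$ with $C_1\subseteq\mathcal G^{(0)}$ and $C_2\subseteq\mathcal H^{(0)}$ compact open. Since $\mathbf 1_{C_1\times C_2}=\mathbf 1_{C_1}\mathbf 1_C$, we get $A_{\Bbbk}(\mathcal K)\mathbf 1_{C_1\times C_2}\subseteq A_{\Bbbk}(\mathcal K)\mathbf 1_C$, so it suffices to find Følner subspaces in the smaller ideal. Under the tensor identification,
$$
A_{\Bbbk}(\mathcal K)\mathbf 1_{C_1\times C_2}\supseteq A_{\Bbbk}(\mathcal G)\mathbf 1_{C_1}\otimes A_{\Bbbk}(\mathcal H)\mathbf 1_{C_2}.
$$

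I then work with the bisection test from Definition~\ref{anchored-algebraic-amenability}, after passing through Proposition~\ref{anchored-finite-dimensional-test}. Given a finite-dimensional $L\subseteq A_{\Bbbk}(\mathcal K)$, choose finite sets $\mathcal E_1\subseteq\operatorname{Bis}_c(\mathcal G)$ and $\mathcal E_2\subseteq\operatorname{Bis}_c(\mathcal H)$ such that $L\subseteq\operatorname{span}\{\mathbf 1_{U}\otimes\mathbf 1_{V}\mid U\in\mathcal E_1,\,V\in\mathcal E_2\}\subseteq L_1\otimes L_2$, where $L_i=\operatorname{span}\{\mathbf 1_U\mid U\in\mathcal E_i\}$. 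Choose $\delta>0$ with $(1+\delta)^2\le 1+\varepsilon$. Take anchored Følner subspaces $W_1\subseteq A_{\Bbbk}(\mathcal G)\mathbf 1_{C_1}$ with $\dim(W_1+L_1W_1)\le(1+\delta)\dim W_1$, and similarly $W_2$ for $(C_2,L_2)$. Set $W=W_1\otimes W_2$, which is non-zero, finite-dimensional and lies in the anchored ideal.

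The key estimate, and the one step needing care, is
$$
W+LW\subseteq (W_1+L_1W_1)\otimes(W_2+L_2W_2).
$$
This holds because $(a\otimes b)(w_1\otimes w_2)=aw_1\otimes bw_2$, and $W_1\otimes W_2$ is contained in the right-hand side. Taking dimensions and using multiplicativity of dimension under tensor products gives
$$
\dim(W+LW)\le(1+\delta)^2\dim W_1\dim W_2\le(1+\varepsilon)\dim W.
$$
The non-unitality caveat of Remark~\ref{unital-plus-W} causes no trouble, because $W_i$ is already included in $W_i+L_iW_i$. The main obstacle is therefore only the tensor-product identification, specifically its injectivity. Even without it, the embedding $W_1\otimes W_2\to A_{\Bbbk}(\mathcal K)$ is injective on elementary products of bases $\{f_\alpha\}$ and $\{g_\beta\}$: one checks this by evaluating at points $(\gamma,\eta)$, using linear independence of the $f_\alpha$ and of the $g_\beta$ as functions. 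That suffices for the dimension count.
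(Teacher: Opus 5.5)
Your proposal is correct and follows essentially the same route as the paper: shrink the anchor to a box $C_1\times C_2\subseteq C$, dominate $L$ by $L_1\otimes L_2$, take $W=W_1\otimes W_2$ with $(1+\delta)^2\le 1+\varepsilon$, and use $W+LW\subseteq (W_1+L_1W_1)\otimes(W_2+L_2W_2)$. The differences are minor: the paper cites Rigby's tensor-product theorem for the identification and obtains $L_1,L_2$ from elementary-tensor expansions of a basis of $L$ rather than from box bisections, and your identity $\mathbf 1_{C_1\times C_2}=\mathbf 1_{C_1}\mathbf 1_C$ should read $\mathbf 1_{C_1\times C_2}=\mathbf 1_{C_1\times C_2}\ast\mathbf 1_C$.
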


\begin{proof}
The product $\mathcal G\times\mathcal H$ is again a Hausdorff ample groupoid. By the tensor-product theorem for Steinberg algebras, there is a natural $\Bbbk$-algebra isomorphism
$$
\Theta:A_{\Bbbk}(\mathcal G)\otimes_{\Bbbk}A_{\Bbbk}(\mathcal H)\longrightarrow A_{\Bbbk}(\mathcal G\times\mathcal H)
$$
satisfying
$$
\Theta(f\otimes g)(\gamma,\eta)=f(\gamma)g(\eta);
$$
see \cite[Theorem 4.3]{Rigby21}. In particular,
$$
\Theta(\mathbf 1_U\otimes\mathbf 1_V)=\mathbf 1_{U\times V}
$$
for compact open bisections $U\subseteq\mathcal G$ and $V\subseteq\mathcal H$. We use $\Theta$ to identify the two algebras.

Let
$$
C\subseteq(\mathcal G\times\mathcal H)^{(0)}=\mathcal G^{(0)}\times\mathcal H^{(0)}
$$
be non-empty, compact and open. Choose a point $(x,y)\in C$. Since both unit spaces have bases of compact open subsets, there exist non-empty compact open subsets
$$
C_{\mathcal G}\subseteq\mathcal G^{(0)}, \ C_{\mathcal H}\subseteq\mathcal H^{(0)}
$$
such that
$$
(x,y)\in C_{\mathcal G}\times C_{\mathcal H}\subseteq C.
$$

Let
$$
L\subseteq A_{\Bbbk}(\mathcal G)\otimes_{\Bbbk}A_{\Bbbk}(\mathcal H)
$$
be finite-dimensional and let $\varepsilon>0$. Choose a basis $\ell_1,\dots,\ell_n$ of $L$. Write each basis vector as a finite sum of elementary tensors,
$$
\ell_j=\sum_{r=1}^{m_j}a_{j,r}\otimes b_{j,r}.
$$
Set
$$
L_{\mathcal G}=\operatorname{span}_{\Bbbk}\{a_{j,r}\mid 1\leq j\leq n,\ 1\leq r\leq m_j\}
$$
and
$$
L_{\mathcal H}=\operatorname{span}_{\Bbbk}\{b_{j,r}\mid 1\leq j\leq n,\ 1\leq r\leq m_j\}.
$$
Then
$$
L\subseteq L_{\mathcal G}\otimes_{\Bbbk}L_{\mathcal H}.
$$
Choose
$$
\delta=\sqrt{1+\varepsilon}-1>0,
$$
so that
$$
(1+\delta)^2=1+\varepsilon.
$$

By algebraic amenability of $\mathcal G$, there exists a non-zero finite-dimensional subspace
$$
W_{\mathcal G}\subseteq A_{\Bbbk}(\mathcal G)\mathbf 1_{C_{\mathcal G}}
$$
such that
$$
\dim_{\Bbbk}\left(W_{\mathcal G}+L_{\mathcal G}W_{\mathcal G}\right)\leq(1+\delta)\dim_{\Bbbk}(W_{\mathcal G}).
$$
Similarly, there exists a non-zero finite-dimensional subspace
$$
W_{\mathcal H}\subseteq A_{\Bbbk}(\mathcal H)\mathbf 1_{C_{\mathcal H}}
$$
such that
$$
\dim_{\Bbbk}\left(W_{\mathcal H}+L_{\mathcal H}W_{\mathcal H}\right)\leq(1+\delta)\dim_{\Bbbk}(W_{\mathcal H}).
$$

Define
$$
W=W_{\mathcal G}\otimes_{\Bbbk}W_{\mathcal H}.
$$
Then $W$ is non-zero and finite-dimensional. Moreover,
$$
W\subseteq A_{\Bbbk}(\mathcal G\times\mathcal H) \mathbf 1_{C_{\mathcal G}\times C_{\mathcal H}}.
$$
Since $C_{\mathcal G}\times C_{\mathcal H}\subseteq C$, one has
$$
\mathbf 1_{C_{\mathcal G}\times C_{\mathcal H}}\ast\mathbf 1_C=\mathbf 1_{C_{\mathcal G}\times C_{\mathcal H}},
$$
and therefore
$$
W\subseteq A_{\Bbbk}(\mathcal G\times\mathcal H)\mathbf 1_C.
$$

Since $L\subseteq L_{\mathcal G}\otimes L_{\mathcal H}$,
$$
LW\subseteq(L_{\mathcal G}W_{\mathcal G})\otimes_{\Bbbk}(L_{\mathcal H}W_{\mathcal H}).
$$
Hence
$$
W+LW\subseteq\left(W_{\mathcal G}+L_{\mathcal G}W_{\mathcal G}\right)\otimes_{\Bbbk}\left(W_{\mathcal H}+L_{\mathcal H}W_{\mathcal H}\right).
$$
Taking dimensions gives
$$
\begin{aligned}
\dim_{\Bbbk}(W+LW) &\leq\dim_{\Bbbk}\left(W_{\mathcal G}+L_{\mathcal G}W_{\mathcal G}\right)\dim_{\Bbbk}\left(W_{\mathcal H}+L_{\mathcal H}W_{\mathcal H}\right)\\
&\leq(1+\delta)^2\dim_{\Bbbk}(W_{\mathcal G})\dim_{\Bbbk}(W_{\mathcal H})\\
&=(1+\varepsilon)\dim_{\Bbbk}(W).
\end{aligned}
$$
Thus $\mathcal G\times\mathcal H$ is algebraically amenable.
\end{proof}

\begin{corollary}\label{finite-products}
Every finite product of algebraically amenable Hausdorff ample groupoids is algebraically amenable.
\end{corollary}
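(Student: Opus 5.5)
The plan is an induction on the number of factors, with Proposition~\ref{products} supplying the inductive step. Let $\mathcal G_1,\dots,\mathcal G_n$ be algebraically amenable Hausdorff ample groupoids with $n\geq 1$. For $n=1$ there is nothing to prove. For $n=2$ the claim is exactly Proposition~\ref{products}.

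For $n\geq 3$, I will use the canonical identification
$$
\mathcal G_1\times\cdots\times\mathcal G_n\cong(\mathcal G_1\times\cdots\times\mathcal G_{n-1})\times\mathcal G_n,
$$
given by regrouping coordinates. This map is a bijection. It respects the componentwise source, range, inversion and multiplication, so it is a groupoid isomorphism. It is also a homeomorphism, because both sides carry the product topology and associativity of products of topological spaces is canonical. Hence it is an isomorphism of topological groupoids.

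Before applying Proposition~\ref{products}, I will check that $\mathcal G_1\times\cdots\times\mathcal G_{n-1}$ is again a Hausdorff ample groupoid. Products of Hausdorff spaces are Hausdorff. Products of compact open bisections are compact open bisections, and these form a basis of the product topology, so the product is étale and ample. The proof of Proposition~\ref{products} already notes this for two factors, and iterating covers any finite number.

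By the inductive hypothesis, $\mathcal G_1\times\cdots\times\mathcal G_{n-1}$ is algebraically amenable. Proposition~\ref{products} then gives algebraic amenability of $(\mathcal G_1\times\cdots\times\mathcal G_{n-1})\times\mathcal G_n$. Proposition~\ref{isomorphism-invariance} transports this property along the regrouping isomorphism to $\mathcal G_1\times\cdots\times\mathcal G_n$.

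The only point needing any care is that the regrouping map is a topological groupoid isomorphism, so that Proposition~\ref{isomorphism-invariance} applies. This is routine.
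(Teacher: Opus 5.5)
Your proposal is correct and is the paper's argument: the paper's proof is the one line ``apply Proposition~\ref{products} inductively.'' Your check that the partial products are again Hausdorff ample, and your use of Proposition~\ref{isomorphism-invariance} along the regrouping isomorphism, just spell out bookkeeping the paper leaves implicit.
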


\begin{proof}
Apply Proposition~\ref{products} inductively.
\end{proof}

\subsection{Directed unions}

\begin{proposition}\label{directed-unions}
Let $(\mathcal G_i)_{i\in I}$ be a directed family, under inclusion, of open Hausdorff ample subgroupoids of a Hausdorff ample groupoid $\mathcal G$. Suppose that
$$
\mathcal G=\bigcup_{i\in I}\mathcal G_i
$$
and that every $\mathcal G_i$ is algebraically amenable over $\Bbbk$. Then $\mathcal G$ is algebraically amenable over $\Bbbk$.
\end{proposition}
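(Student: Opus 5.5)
The plan is to reduce everything to a single member $\mathcal G_{i}$ of the directed family, using compactness, and then transport a F{\o}lner subspace of $A_{\Bbbk}(\mathcal G_{i})$ into $A_{\Bbbk}(\mathcal G)$ by the extension-by-zero homomorphism of Lemma~\ref{open-subgroupoid-extension-by-zero}. I would verify condition $(2)$ of Proposition~\ref{anchored-finite-dimensional-test}. So fix a non-empty compact open $C\subseteq\mathcal G^{(0)}$, a finite-dimensional $L\subseteq A_{\Bbbk}(\mathcal G)$ with basis $f_1,\dots,f_n$, and $\varepsilon>0$.

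\emph{Step 1: choose a single index.} I would first note that each $\mathcal G_i^{(0)}=\mathcal G_i\cap\mathcal G^{(0)}$ is open in $\mathcal G^{(0)}$. This holds because the units of a subgroupoid are units of $\mathcal G$. I would also note that these sets cover $\mathcal G^{(0)}$: if a unit $u$ lies in $\mathcal G_i$, then $u=s(u)\in\mathcal G_i^{(0)}$.

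Next consider the compact set
$$
K=C\cup\bigcup_{j=1}^n\operatorname{supp}(f_j).
$$
It is covered by the open sets $\mathcal G_i$. Compactness gives finitely many indices, and directedness then gives one index $i$ with $K\subseteq\mathcal G_i$. In particular $C\subseteq\mathcal G_i^{(0)}$, and $C$ is compact and open there.

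\emph{Step 2: pull $L$ back to $\mathcal G_i$.} Each restriction $f_j|_{\mathcal G_i}$ is locally constant. Its support $\operatorname{supp}(f_j)$ lies inside the open set $\mathcal G_i$ and is compact in $\mathcal G$, hence compact in the subspace topology. Therefore $f_j|_{\mathcal G_i}\in A_{\Bbbk}(\mathcal G_i)$ and $j(f_j|_{\mathcal G_i})=f_j$. Thus $L=j(L_i)$ for $L_i=\operatorname{span}\{f_j|_{\mathcal G_i}\}$. I expect this bookkeeping to be the main, though mild, obstacle: checking that the supports are genuinely compact inside $\mathcal G_i$, and that $C$ is a legitimate anchor for $\mathcal G_i$.

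\emph{Step 3: apply amenability of $\mathcal G_i$ and push forward.} Since $\mathcal G_i$ is algebraically amenable, there is a non-zero finite-dimensional subspace $W_i\subseteq A_{\Bbbk}(\mathcal G_i)\mathbf 1_C$ with
$$
\dim_{\Bbbk}(W_i+L_iW_i)\leq(1+\varepsilon)\dim_{\Bbbk}(W_i).
$$
Set $W=j(W_i)$. Because $j$ is an injective algebra homomorphism,
$$
j(W_i+L_iW_i)=W+LW,
$$
and dimensions are preserved.

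It remains to check the anchor. Elements of $W_i$ are supported in $s_{\mathcal G_i}^{-1}(C)\subseteq s_{\mathcal G}^{-1}(C)$. Hence, by Proposition~\ref{unit-space-cut-downs}, $W\subseteq A_{\Bbbk}(\mathcal G)\mathbf 1_C$; equivalently, $j(\mathbf 1_C)=\mathbf 1_C$ and $j$ is multiplicative. This argument does not need the equality of ideals in Lemma~\ref{open-subgroupoid-extension-by-zero}, which would require invariance, but only this inclusion. Thus $W$ is the required F{\o}lner subspace, and $\mathcal G$ is algebraically amenable.
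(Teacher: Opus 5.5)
Your proposal is correct and follows essentially the same route as the paper. You use compactness and directedness to place $C$ and the supports of a basis of $L$ inside a single $\mathcal G_{i_0}$, take an anchored F{\o}lner subspace there, and transport it to $A_{\Bbbk}(\mathcal G)$ by extension by zero. Your explicit checks are slightly more detailed versions of the paper's bookkeeping: that $C$ is a valid anchor in $\mathcal G_{i_0}^{(0)}$, and that only the inclusion $j\bigl(A_{\Bbbk}(\mathcal G_{i_0})\mathbf 1_C\bigr)\subseteq A_{\Bbbk}(\mathcal G)\mathbf 1_C$ is needed, not the equality of ideals.
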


\begin{proof}
Let $C\subseteq\mathcal G^{(0)}$ be non-empty, compact and open, let $L\subseteq A_{\Bbbk}(\mathcal G)$ be finite-dimensional and let $\varepsilon>0$. Choose a basis $f_1,\dots,f_n$ of $L$, allowing the list to be empty if $L=\{0\}$. The set
$$
K=C\cup\bigcup_{j=1}^n\operatorname{supp}(f_j)
$$
is compact in $\mathcal G$.

The family $(\mathcal G_i)_{i\in I}$ is an open cover of $\mathcal G$, so finitely many members cover $K$. Directedness gives an index $i_0\in I$ containing all those finitely many subgroupoids. Hence
$$
K\subseteq\mathcal G_{i_0}.
$$
In particular,
$$
C\subseteq\mathcal G_{i_0}^{(0)} \text{ and } L\subseteq A_{\Bbbk}(\mathcal G_{i_0}),
$$
where the latter algebra is viewed inside $A_{\Bbbk}(\mathcal G)$ by extension by zero. Since $\mathcal G_{i_0}$ is open, the set $C$ is compact and open in $\mathcal G_{i_0}^{(0)}$.

Algebraic amenability of $\mathcal G_{i_0}$ yields a non-zero finite-dimensional subspace
$$
W\subseteq A_{\Bbbk}(\mathcal G_{i_0})\mathbf 1_C
$$
such that
$$
\dim_{\Bbbk}(W+LW)\leq(1+\varepsilon)\dim_{\Bbbk}(W).
$$
After extension by zero,
$$
W\subseteq A_{\Bbbk}(\mathcal G)\mathbf 1_C,
$$
and the same multiplication and the same dimension estimate hold in $A_{\Bbbk}(\mathcal G)$. Therefore $\mathcal G$ is algebraically amenable.
\end{proof}

\subsection{Finite-dimensional and AF-type approximations}

The following criterion is stronger and cleaner than assuming that the entire Steinberg algebra is finite-dimensional.

\begin{proposition}\label{locally-finite-dimensional-steinberg}
Let $\mathcal G$ be a Hausdorff ample groupoid. Suppose that every finite subset of $A_{\Bbbk}(\mathcal G)$ is contained in a finite-dimensional $\Bbbk$-subalgebra of $A_{\Bbbk}(\mathcal G)$. Then $\mathcal G$ is algebraically amenable over $\Bbbk$.
\end{proposition}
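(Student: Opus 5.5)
Given $C$, $L$ and $\varepsilon$, I will build a finite-dimensional subalgebra that contains both the anchor and the multipliers, and then use the left ideal it generates inside the anchored ideal. By Proposition~\ref{anchored-finite-dimensional-test} it suffices to treat a non-empty compact open $C\subseteq\mathcal G^{(0)}$, a finite-dimensional $L\subseteq A_{\Bbbk}(\mathcal G)$ and $\varepsilon>0$. Choose a basis $f_1,\dots,f_n$ of $L$ and apply the hypothesis to the finite set $\{\mathbf 1_C,f_1,\dots,f_n\}$. This gives a finite-dimensional subalgebra $B\subseteq A_{\Bbbk}(\mathcal G)$ with $\mathbf 1_C\in B$ and $L\subseteq B$.

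Define
$$
W=B\mathbf 1_C=\operatorname{span}_{\Bbbk}\{b\ast\mathbf 1_C\mid b\in B\}.
$$
Since $\mathbf 1_C\in B$ and $B$ is closed under products, $W\subseteq B$, so $W$ is finite-dimensional. By Proposition~\ref{unit-space-cut-downs}, $W\subseteq A_{\Bbbk}(\mathcal G)\mathbf 1_C$. It is non-zero because $\mathbf 1_C=\mathbf 1_C\ast\mathbf 1_C\in W$ and $C\neq\varnothing$.

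The key point is that $W$ is a left ideal of $B$. Indeed, $LW\subseteq BB\mathbf 1_C\subseteq B\mathbf 1_C=W$. Hence $W+LW=W$ and
$$
\dim_{\Bbbk}(W+LW)=\dim_{\Bbbk}(W)\le(1+\varepsilon)\dim_{\Bbbk}(W).
$$
Proposition~\ref{anchored-finite-dimensional-test} then shows that $\mathcal G$ is algebraically amenable.

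The only step that needs care is non-vanishing of $W$, and it is the reason for putting $\mathbf 1_C$ itself into $B$ rather than just working with $L$. Once $\mathbf 1_C\in B$, idempotence gives $\mathbf 1_C\in W$, so $W\neq\{0\}$, and there is no further obstacle. Local units (Proposition~\ref{steinberg-local-units}) are not needed, because membership of $\mathbf 1_C$ in $B$ already places $W$ inside $B$ and inside the anchored ideal.
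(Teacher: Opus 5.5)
Your proposal is correct and follows the paper's proof essentially step for step. You adjoin $\mathbf 1_C$ to a basis of $L$ and take a finite-dimensional subalgebra $B$ containing them; then $W=B\mathbf 1_C$ is non-zero because $\mathbf 1_C=\mathbf 1_C\ast\mathbf 1_C\in W$, lies in $A_{\Bbbk}(\mathcal G)\mathbf 1_C$, and satisfies $LW\subseteq W$, so the algebraic boundary is zero.
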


\begin{proof}
Let $C\subseteq\mathcal G^{(0)}$ be non-empty, compact and open, let $L\subseteq A_{\Bbbk}(\mathcal G)$ be finite-dimensional and let $\varepsilon>0$. Choose a basis $\ell_1,\dots,\ell_n$ of $L$. By hypothesis, there exists a finite-dimensional subalgebra
$$
B\subseteq A_{\Bbbk}(\mathcal G)
$$
containing
$$
\{\ell_1,\dots,\ell_n,\mathbf 1_C\}.
$$
Set
$$
W=B\mathbf 1_C.
$$
The space $W$ is finite-dimensional and
$$
\mathbf 1_C=\mathbf 1_C\ast\mathbf 1_C\in W,
$$
so $W\neq\{0\}$. Also,
$$
W\subseteq A_{\Bbbk}(\mathcal G)\mathbf 1_C.
$$
Since $L\subseteq B$ and $B$ is a subalgebra,
$$
LW=L(B\mathbf 1_C)\subseteq B\mathbf 1_C=W.
$$
Therefore
$$
W+LW=W
$$
and
$$
\dim_{\Bbbk}(W+LW)=\dim_{\Bbbk}(W)\leq(1+\varepsilon)\dim_{\Bbbk}(W).
$$
Thus $\mathcal G$ is algebraically amenable.
\end{proof}

\begin{corollary}\label{finite-dimensional-steinberg}
If $A_{\Bbbk}(\mathcal G)$ is finite-dimensional, then $\mathcal G$ is algebraically amenable over $\Bbbk$.
\end{corollary}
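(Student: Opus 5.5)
This follows directly from Proposition~\ref{locally-finite-dimensional-steinberg}. If $A_{\Bbbk}(\mathcal G)$ is finite-dimensional, then every finite subset of $A_{\Bbbk}(\mathcal G)$ lies in the finite-dimensional subalgebra $B=A_{\Bbbk}(\mathcal G)$. The hypothesis of that proposition therefore holds trivially, and $\mathcal G$ is algebraically amenable over $\Bbbk$.

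A direct argument is also short. Fix a non-empty compact open $C\subseteq\mathcal G^{(0)}$, a finite-dimensional $L\subseteq A_{\Bbbk}(\mathcal G)$ and $\varepsilon>0$, and set $W=A_{\Bbbk}(\mathcal G)\mathbf 1_C$.

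\begin{itemize}
\item $W$ is finite-dimensional, being a subspace of a finite-dimensional algebra.
\item $W$ is non-zero, since $\mathbf 1_C=\mathbf 1_C\ast\mathbf 1_C\in W$ and $\mathbf 1_C\neq 0$ because $C$ is non-empty.
\item $W$ is a left ideal, so $LW\subseteq W$. Hence $W+LW=W$, and the F{\o}lner inequality holds with zero boundary.
\end{itemize}

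By Proposition~\ref{anchored-finite-dimensional-test}, this gives source-anchored algebraic amenability.

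No real obstacle arises. The only point to check is that $W\neq\{0\}$, which needs $C$ to be non-empty; the definition guarantees this.
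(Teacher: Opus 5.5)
Your proof is correct and matches the paper's: it applies Proposition~\ref{locally-finite-dimensional-steinberg} with the finite-dimensional subalgebra $B=A_{\Bbbk}(\mathcal G)$ itself. Your direct argument with $W=A_{\Bbbk}(\mathcal G)\mathbf 1_C$ is simply that proposition's proof specialized to this case, and it is also valid.
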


\begin{proof}
Apply Proposition~\ref{locally-finite-dimensional-steinberg} with the finite-dimensional subalgebra $A_{\Bbbk}(\mathcal G)$ itself.
\end{proof}

\begin{definition}[{\cite[\S 5.2]{ClarkHazrat20}}]\label{af-groupoid-definition}
Let $\mathcal G$ be a second-countable Hausdorff ample groupoid with compact unit space. An open subgroupoid $\mathcal H\subseteq\mathcal G$ is called \emph{elementary} if $\mathcal H$ is compact and principal and
$$
\mathcal H^{(0)}=\mathcal G^{(0)}.
$$
The groupoid $\mathcal G$ is called an \emph{AF groupoid} if there exists an increasing sequence
$$
\mathcal H_1\subseteq\mathcal H_2\subseteq\cdots
$$
of elementary subgroupoids such that
$$
\mathcal G=\bigcup_{n\geq 1}\mathcal H_n.
$$
\end{definition}

\begin{corollary}\label{af-groupoids}
Every AF groupoid is algebraically amenable over every field $\Bbbk$.
\end{corollary}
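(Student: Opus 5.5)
The natural route is through the directed-union result, Proposition~\ref{directed-unions}. Fix an AF groupoid $\mathcal G=\bigcup_{n\geq1}\mathcal H_n$ with each $\mathcal H_n$ elementary. Each $\mathcal H_n$ is an open subgroupoid, hence a Hausdorff ample groupoid in the subspace topology, and the family is increasing and therefore directed. So it suffices to show that every elementary groupoid $\mathcal H$ is algebraically amenable over $\Bbbk$. The key tool for that is Proposition~\ref{locally-finite-dimensional-steinberg}, or more precisely its method.

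To treat an elementary $\mathcal H$ (compact, open, principal, with $\mathcal H^{(0)}=\mathcal G^{(0)}$ compact), I would show that every finite subset of $A_{\Bbbk}(\mathcal H)$ lies in a finite-dimensional subalgebra. Let $f_1,\dots,f_m\in A_{\Bbbk}(\mathcal H)$ be given. Each $f_j$ is locally constant on the compact space $\mathcal H$, so it takes finitely many values on finitely many clopen sets. I would then produce a finite partition of $\mathcal H$ into compact open bisections $\{U_{\alpha}\}$ that is closed under the inverse-semigroup operations up to refinement and on which every $f_j$ is constant. The span of the $\mathbf 1_{U_\alpha}$ is then a finite-dimensional subalgebra containing the $f_j$.

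The concrete construction I would try is the following. Since $\mathcal H$ is compact, principal and étale, the orbits are finite with uniformly bounded size, and $\mathcal H$ is a compact open equivalence relation on $\mathcal H^{(0)}$. Cover $\mathcal H$ by finitely many compact open bisections $V_1,\dots,V_k$ refining the level sets of all $f_j$. Let $\mathcal B$ be the Boolean algebra of clopen subsets of $\mathcal H^{(0)}$ generated by the sets $s(V_{i_1}\cdots V_{i_p})$ over all finite words. The main obstacle is proving that $\mathcal B$ is finite.

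To overcome it, I would use compactness and principality. Each product $V_{i_1}\cdots V_{i_p}$ is a compact open bisection contained in the compact set $\mathcal H$, and $\mathcal H$ is covered by finitely many bisections. Because $\mathcal H$ is principal, a bisection $V\subseteq\mathcal H$ is determined by the partial map $s(\gamma)\mapsto r(\gamma)$ on $s(V)$. These maps take values in the finite orbits, which have uniformly bounded size. So the words generate only finitely many distinct partial homeomorphisms. I would check this via a refinement argument: partition $\mathcal H^{(0)}$ into finitely many clopen pieces on which the pattern of each $V_i$ is constant, then close up under the finitely many resulting partial maps. With finitely many generators and finitely many relevant pieces, the generated inverse semigroup of bisections is finite. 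The linear span of its characteristic functions is then a finite-dimensional subalgebra containing each $f_j$, since each $f_j$ is a combination of the $\mathbf 1_{V_i}$.

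If this finiteness argument proves delicate, there is a fallback that avoids it. A compact principal ample groupoid has $A_{\Bbbk}(\mathcal H)$ locally matricial, and that is the standard AF-algebra description in \cite{ClarkHazrat20}. Either route gives local finite-dimensionality. Proposition~\ref{locally-finite-dimensional-steinberg} then shows each $\mathcal H_n$ is algebraically amenable over any field, and Proposition~\ref{directed-unions} gives the result for $\mathcal G$.
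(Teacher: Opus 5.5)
Your proposal is correct in outline, but it is organized differently from the paper. The paper never passes to the elementary pieces. It cites that $A_{\Bbbk}(\mathcal G)$ is ultramatricial for an AF groupoid \cite[\S 5]{ClarkHazrat20}, so every finite subset lies in a finite-dimensional subalgebra, and it applies Proposition~\ref{locally-finite-dimensional-steinberg} once, to $\mathcal G$ itself. You instead apply that proposition to each elementary $\mathcal H_n$ and then use Proposition~\ref{directed-unions}.

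That is valid, but the directed-union step is not needed. Every compact subset of $\mathcal G$ lies in some $\mathcal H_n$, so $A_{\Bbbk}(\mathcal G)=\bigcup_n A_{\Bbbk}(\mathcal H_n)$, and local finite-dimensionality passes to $\mathcal G$ directly. Your fallback is essentially the paper's citation. What your primary route buys is a self-contained proof that compact principal ample groupoids have locally finite-dimensional Steinberg algebras, with no appeal to the AF structure theory.

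The step you assert rather than prove is that the inverse semigroup $S$ generated by $V_1,\dots,V_k$ is finite. Bounded orbit size does not give this by itself, since one bisection has infinitely many compact open sub-bisections. Your ``clopen pieces on which the pattern of each $V_i$ is constant'' is not yet an argument. One way to complete it:
\begin{enumerate}
\item By principality, each $U\in S$ is determined by the partial bijection $\phi_U$ it induces on the orbits, and $\phi_{UV}=\phi_U\circ\phi_V$.
\item On an orbit $O$ with at most $N$ points, the generators give a system $(O;\phi_{V_1}|_O,\dots,\phi_{V_k}|_O)$. There are only finitely many isomorphism types of such systems.
\item Whether two words in the $V_i^{\pm1}$ agree on $O$ depends only on that type. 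Hence $S$ embeds in a finite product of finite symmetric inverse monoids, so $S$ is finite.
\item Since $\mathbf 1_U\ast\mathbf 1_V=\mathbf 1_{UV}$, the space $\operatorname{span}_{\Bbbk}\{\mathbf 1_U\mid U\in S\}$ is a finite-dimensional subalgebra.
\end{enumerate}
You should also choose the $V_i$ so that each $f_j$ lies in the span of the $\mathbf 1_{V_i}$, for instance by taking them pairwise disjoint. An overlapping cover that merely refines the level sets does not guarantee this.
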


\begin{proof}
For an AF groupoid, the Steinberg algebra over a field is ultramatricial; see \cite[\S 5]{ClarkHazrat20}. In particular, it is a directed union of finite-dimensional subalgebras, so every finite subset is contained in a finite-dimensional subalgebra. Proposition~\ref{locally-finite-dimensional-steinberg} gives the conclusion.
\end{proof}

\subsection{Transitive discrete groupoids}\label{transitive-discrete-groupoids}

The behavior of a transitive discrete groupoid depends sharply on the size of its unit space. If the unit space is infinite, a F{\o}lner subspace may escape from any prescribed finite collection of source fibers. For a finite unit space this phenomenon disappears, and algebraic amenability is determined exactly by the isotropy group.

\begin{lemma}\label{infinite-transitive-discrete-groupoid}
Let $\mathcal H$ be a countable transitive discrete groupoid whose unit space $\mathcal H^{(0)}$ is infinite. Then $\mathcal H$ is source-anchored left algebraically amenable over every field $\Bbbk$.
\end{lemma}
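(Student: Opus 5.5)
The plan is to imitate the range-escape argument from the proof of Proposition~\ref{arbitrary-subgroupoids-fail}, replacing the explicit product structure $I\times\Gamma\times I$ by transitivity. Since $\mathcal H$ is discrete, compact subsets are finite. Hence the non-empty compact open subsets $C\subseteq\mathcal H^{(0)}$ are exactly the non-empty finite sets of units, and every element of $A_{\Bbbk}(\mathcal H)$ is a finitely supported function. I would verify condition~(2) of Proposition~\ref{anchored-finite-dimensional-test} directly, with a one-dimensional $W$.

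Fix a non-empty finite $C\subseteq\mathcal H^{(0)}$, a finite-dimensional $L\subseteq A_{\Bbbk}(\mathcal H)$ and $\varepsilon>0$. I will proceed in four steps.

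\begin{enumerate}
\item Choose a basis $f_1,\dots,f_n$ of $L$ (possibly empty) and let $S=\bigcup_j\operatorname{supp}(f_j)$. This is a finite set of arrows, so $s(S)$ is a finite set of units.
\item Because $\mathcal H^{(0)}$ is infinite, pick a unit $v\notin s(S)$, and pick any $c_0\in C$.
\item By transitivity (Definition~\ref{groupoid-orbit-isotropy}), there is an arrow $\beta$ with $s(\beta)=c_0$ and $r(\beta)=v$. Set $W=\operatorname{span}_{\Bbbk}\{\mathbf 1_{\{\beta\}}\}$. Since $\{\beta\}$ is a compact open bisection in the discrete groupoid, $W\subseteq A_{\Bbbk}(\mathcal H)$. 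Proposition~\ref{unit-space-cut-downs} gives $W\subseteq A_{\Bbbk}(\mathcal H)\mathbf 1_C$, because $s(\beta)\in C$.
\item Compute $f_j\ast\mathbf 1_{\{\beta\}}$. Evaluating at $\gamma$, the only possible contribution comes from a factorization $\gamma=\alpha\beta$ with $\alpha\in S$. Such a product requires $s(\alpha)=r(\beta)=v$, which is impossible since $v\notin s(S)$. Hence $LW=\{0\}$, so $W+LW=W$, and the F{\o}lner inequality holds trivially for every $\varepsilon$.
\end{enumerate}

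The argument uses no property of $\Bbbk$, which gives the conclusion over every field. Countability is not actually needed, beyond guaranteeing that the setting is a discrete, hence Hausdorff ample, groupoid.

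There is no serious obstacle here. The only points needing care are, first, that compact open sets in a discrete space are finite, so that both $C$ and the supports are finite; and second, the convolution computation, where the sum $\sum_{\alpha\beta'=\gamma}$ reduces to the single term $\beta'=\beta$. Once those are stated, the proof is a direct transcription of the range-escape mechanism described in Remark~\ref{range-escape}, with transitivity supplying the escaping arrow $\beta$.
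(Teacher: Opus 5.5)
Your proposal is correct and follows essentially the same route as the paper's proof. You use the same range-escape construction: a single arrow $\beta$ from a unit $c\in C$ to a unit $v\notin s(S)$, supplied by transitivity, so that $W=\operatorname{span}_{\Bbbk}\{\mathbf 1_{\{\beta\}}\}\subseteq A_{\Bbbk}(\mathcal H)\mathbf 1_C$ satisfies $LW=\{0\}$. Your side remark that countability plays no role in the argument is also accurate.
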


\begin{proof}
Let $C\subseteq\mathcal H^{(0)}$ be non-empty, compact and open, let $L\subseteq A_{\Bbbk}(\mathcal H)$ be finite-dimensional and let $\varepsilon>0$. Since $\mathcal H^{(0)}$ is discrete, the compact set $C$ is finite.

Choose a basis $f_1,\dots,f_m$ of $L$ and put
$$
S=\bigcup_{j=1}^m\operatorname{supp}(f_j).
$$
Every $f_j$ has finite support because $\mathcal H$ is discrete and $f_j$ is compactly supported. Hence $S$ and $s(S)$ are finite. Since $\mathcal H^{(0)}$ is infinite, there exists
$$
v\in\mathcal H^{(0)}\setminus s(S).
$$
Choose $c\in C$. By transitivity, there is an arrow $\beta\in\mathcal H$ such that
$$
s(\beta)=c,\ r(\beta)=v.
$$
Define
$$
W=\operatorname{span}_{\Bbbk}\{\mathbf 1_{\{\beta\}}\}.
$$
Since $s(\beta)=c\in C$, Proposition~\ref{unit-space-cut-downs} gives
$$
W\subseteq A_{\Bbbk}(\mathcal H)\mathbf 1_C.
$$

We claim that $LW=\{0\}$. Indeed, let $f\in L$. Every arrow in $\operatorname{supp}(f)$ belongs to $S$. A product $\alpha\beta$ can be defined only if
$$
s(\alpha)=r(\beta)=v.
$$
However, $v\notin s(S)$, so no arrow $\alpha\in\operatorname{supp}(f)$ is composable on the left with $\beta$. Consequently,
$$
f\ast\mathbf 1_{\{\beta\}}=0.
$$
Since this holds for every $f\in L$, we obtain $LW=\{0\}$. Therefore
$$
W+LW=W,
$$
and hence
$$
\dim_{\Bbbk}(W+LW)=\dim_{\Bbbk}(W)\leq(1+\varepsilon)\dim_{\Bbbk}(W).
$$
Thus $\mathcal H$ is source-anchored left algebraically amenable over $\Bbbk$. Since the argument does not depend on the field, the conclusion holds over every field.
\end{proof}

The next standard coordinatization makes the finite-unit case completely explicit.

\begin{lemma}[Matrix model for a transitive groupoid; cf. {\cite[Example 3.2 and Proposition 4.3]{ClarkHazrat20}}]\label{finite-transitive-matrix-model}
Let $\mathcal H$ be a transitive discrete groupoid with finite unit space
$$
O=\mathcal H^{(0)}=\{u_1,\dots,u_n\}.
$$
Fix $u=u_1$ and put
$$
K=\mathcal H_u^u.
$$
For each $1\leq i\leq n$, choose an arrow $\tau_i\in\mathcal H$ such that
$$
s(\tau_i)=u,\ r(\tau_i)=u_i,
$$
and take $\tau_1=u$. Then the map
$$
\Psi:O\times K\times O\longrightarrow\mathcal H, \ \Psi(u_i,k,u_j)=\tau_i k\tau_j^{-1},
$$
is a groupoid isomorphism. Moreover, for every field $\Bbbk$, it induces a $\Bbbk$-algebra isomorphism
$$
\Phi:A_{\Bbbk}(\mathcal H)\longrightarrow M_n(\Bbbk K)
$$
determined by
$$
\Phi\left(\mathbf 1_{\{\tau_i k\tau_j^{-1}\}}\right)=e_{ij}(k),
$$
where $e_{ij}(k)$ denotes the matrix whose $(i,j)$-entry is $k$ and whose other entries are zero. In particular,
$$
\Phi\left(\mathbf 1_{\{u_i\}}\right)=e_{ii}(1_{\Bbbk K})
$$
and
$$
\Phi\left(A_{\Bbbk}(\mathcal H)\mathbf 1_{\{u_j\}}\right)=M_n(\Bbbk K)e_{jj}(1_{\Bbbk K}).
$$
\end{lemma}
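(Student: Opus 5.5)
The proof splits into two parts: first the groupoid isomorphism $\Psi$, then the algebra isomorphism $\Phi$ obtained by transporting convolution along $\Psi$.

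\textbf{The map $\Psi$.} Equip $O\times K\times O$ with the pair-groupoid-type structure
$$
(u_i,k,u_j)(u_j,l,u_m)=(u_i,kl,u_m),
$$
with source $u_j$ and range $u_i$. Well-definedness is immediate: $\tau_i k\tau_j^{-1}$ is composable since $s(\tau_i)=u=r(k)$ and $s(k)=u=r(\tau_j^{-1})$. Its source is $u_j$ and its range is $u_i$. The map is multiplicative because
$$
\tau_i k\tau_j^{-1}\tau_j l\tau_m^{-1}=\tau_i kl\tau_m^{-1}.
$$
It is injective: if $\tau_i k\tau_j^{-1}=\tau_{i'}k'\tau_{j'}^{-1}$, then comparing ranges and sources gives $i=i'$ and $j=j'$, and cancelling the $\tau$'s gives $k=k'$. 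It is surjective: given $\gamma$ with $s(\gamma)=u_j$ and $r(\gamma)=u_i$, the arrow $k=\tau_i^{-1}\gamma\tau_j$ lies in $K$. Both groupoids are discrete, so $\Psi$ is a topological isomorphism, and the choice $\tau_1=u$ sends $(u_1,e,u_1)$ to $u$.

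\textbf{The map $\Phi$.} Since $\mathcal H$ is discrete, the compact open bisections include all singletons, so $\{\mathbf 1_{\{\gamma\}}\mid\gamma\in\mathcal H\}$ is a $\Bbbk$-basis of $A_{\Bbbk}(\mathcal H)$: compactly supported functions here are exactly the finitely supported ones. Via $\Psi$ this basis is indexed by the triples $(i,k,j)$. Likewise $\{e_{ij}(k)\}$ is a $\Bbbk$-basis of $M_n(\Bbbk K)$. Define $\Phi$ as the linear bijection matching these two bases.

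Multiplicativity then only needs checking on basis elements. The relation $\mathbf 1_{\{\alpha\}}\ast\mathbf 1_{\{\beta\}}=\mathbf 1_{\{\alpha\beta\}}$ holds when $s(\alpha)=r(\beta)$, and the product is $0$ otherwise. This matches the matrix-unit rule
$$
e_{ij}(k)e_{lm}(l')=\delta_{jl}\,e_{im}(kl'),
$$
using multiplicativity of $\Psi$.

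\textbf{The special values.} The unit $u_i$ equals $\tau_i\tau_i^{-1}=\Psi(u_i,e_K,u_i)$, so $\Phi(\mathbf 1_{\{u_i\}})=e_{ii}(1_{\Bbbk K})$. Finally, $A_{\Bbbk}(\mathcal H)\mathbf 1_{\{u_j\}}$ is the image under right multiplication by $\mathbf 1_{\{u_j\}}$. Since $\Phi$ is a surjective algebra homomorphism, it is carried onto $M_n(\Bbbk K)e_{jj}(1)$. Equivalently, by Proposition~\ref{unit-space-cut-downs}, this ideal is spanned by the basis elements with source $u_j$, namely the $j$-th column.

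The only step needing any care is surjectivity of $\Psi$, via $k=\tau_i^{-1}\gamma\tau_j$. Everything else is bookkeeping.
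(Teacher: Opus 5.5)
Your proposal is correct and follows essentially the same route as the paper. Bijectivity of $\Psi$ comes from $k=\tau_i^{-1}\gamma\tau_j$ plus cancellation; then the singleton basis of $A_{\Bbbk}(\mathcal H)$ is matched with the basis $\{e_{ij}(k)\}$ of $M_n(\Bbbk K)$ by comparing the two multiplication rules, and the unit and left-ideal assertions follow from multiplicativity of $\Phi$.

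The paper also checks inversion explicitly. You omit this step, but it follows automatically, since a bijective multiplicative map compatible with sources and ranges preserves units and inverses. The only blemish is that you reuse $l$ as both an index and a group element in your matrix-unit rule.
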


\begin{proof}
Let $\gamma\in\mathcal H$ satisfy
$$
s(\gamma)=u_j,\ r(\gamma)=u_i.
$$
Then
$$
k_{\gamma}=\tau_i^{-1}\gamma\tau_j
$$
is an arrow from $u$ to $u$, and therefore $k_{\gamma}\in K$. Moreover,
$$
\gamma=\tau_i k_{\gamma}\tau_j^{-1}.
$$
This expression is unique: if
$$
\tau_i k\tau_j^{-1}=\tau_i \ell\tau_j^{-1},
$$
then multiplication on the left by $\tau_i^{-1}$ and on the right by $\tau_j$ gives $k=\ell$. Hence $\Psi$ is bijective.

The element $(u_i,k,u_j)$ has source $u_j$ and range $u_i$. If $u_j=u_p$, then
$$
\begin{aligned}
\Psi(u_i,k,u_j)\Psi(u_p,\ell,u_q)&=
\tau_i k\tau_j^{-1}\tau_j\ell\tau_q^{-1} \\
&= \tau_i k\ell\tau_q^{-1} \\
&= \Psi(u_i,k\ell,u_q).
\end{aligned}
$$
Also,
$$
\Psi(u_i,k,u_j)^{-1}= \tau_j k^{-1}\tau_i^{-1} = \Psi(u_j,k^{-1},u_i).
$$
Thus $\Psi$ preserves source, range, multiplication, inversion and units. Since both groupoids are discrete, it is an isomorphism of topological groupoids.

The Steinberg algebra of a discrete groupoid is the algebra of finitely supported $\Bbbk$-valued functions on its arrow space. The singleton characteristic functions form a $\Bbbk$-basis. For $a,b\in\Bbbk K$, let $e_{ij}(a)$ denote the matrix with $a$ in the $(i,j)$-entry and zero elsewhere. Then
$$
e_{ij}(a)e_{pq}(b)=\begin{cases}
 e_{iq}(ab), & j=p,\\
 0, & j\neq p.
\end{cases}
$$
On the groupoid side,
$$
\mathbf 1_{\{\tau_i k\tau_j^{-1}\}}\ast\mathbf 1_{\{\tau_p\ell\tau_q^{-1}\}}=\begin{cases}
\mathbf 1_{\{\tau_i k\ell\tau_q^{-1}\}}, & j=p,\\
0, & j\neq p.
\end{cases}
$$
Therefore the displayed assignment extends uniquely to a multiplicative $\Bbbk$-linear bijection
$$
\Phi:A_{\Bbbk}(\mathcal H)\longrightarrow M_n(\Bbbk K).
$$
Finally, the unit $u_i$ corresponds to the arrow $\tau_i1_K\tau_i^{-1}$, so
$$
\Phi(\mathbf 1_{\{u_i\}})=e_{ii}(1_{\Bbbk K}).
$$
Since $\Phi$ is multiplicative, it maps the left ideal generated by $\mathbf 1_{\{u_j\}}$ onto the left ideal generated by $e_{jj}(1_{\Bbbk K})$. This proves the final assertion.
\end{proof}

\begin{theorem}\label{finite-transitive-isotropy}
Let $\mathcal H$ be a countable transitive discrete groupoid with finite non-empty unit space, let $u\in\mathcal H^{(0)}$, and put
$$
K=\mathcal H_u^u.
$$
Then the following conditions are equivalent:
\begin{enumerate}
    \item $\mathcal H$ is source-anchored left algebraically amenable over $\Bbbk$;
    \item the isotropy group $K$ is amenable;
    \item every isotropy group of $\mathcal H$ is amenable.
\end{enumerate}
Moreover, if $\mathcal H$ is source-anchored left algebraically amenable over one field, then it is source-anchored left algebraically amenable over every field.
\end{theorem}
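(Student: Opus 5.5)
We will work throughout in the matrix model of Lemma~\ref{finite-transitive-matrix-model}, taking $u=u_1$ and $A=\Bbbk K$. Transitivity makes all isotropy groups conjugate: for $\gamma$ with $s(\gamma)=u$ and $r(\gamma)=v$, the map $k\mapsto\gamma k\gamma^{-1}$ is an isomorphism $\mathcal H_u^u\to\mathcal H_v^v$. This gives $(2)\Leftrightarrow(3)$ at once. Field independence will follow from $(1)\Leftrightarrow(2)$, since amenability of $K$ does not involve $\Bbbk$, exactly as in Corollary~\ref{field-independence-group-case}. The real content is therefore $(1)\Leftrightarrow(2)$. By Proposition~\ref{isomorphism-invariance} and Proposition~\ref{anchored-finite-dimensional-test}, we may test amenability with arbitrary finite-dimensional $L\subseteq M_n(A)$ and left ideals $M_n(A)e_C$, where $e_C=\sum_{j\in C}e_{jj}(1)$.

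\textbf{Proof of $(2)\Rightarrow(1)$.} Let $C$ be given and pick $j\in C$. By Theorem~\ref{bartholdi-group-algebra}, $A$ is left algebraically amenable. Each $\ell\in L$ is a matrix with finitely many entries in $A$; let $L_0\subseteq A$ be the span of all entries of a basis of $L$. Choose $V\subseteq A$ with $\dim(V+L_0V)\le(1+\varepsilon)\dim V$, and set
\[
W=\bigoplus_{i=1}^n e_{ij}(V),
\]
that is, the matrices supported in column $j$ with entries in $V$. Then $W\subseteq M_n(A)e_{jj}(1)\subseteq M_n(A)e_C$, and $\dim W=n\dim V$. For $\ell\in L$, each entry of $\ell w$ lies in $L_0V$, so $W+LW$ is contained in the column-$j$ space with entries in $V+L_0V$. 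Hence
\[
\dim(W+LW)\le n(1+\varepsilon)\dim V=(1+\varepsilon)\dim W.
\]

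\textbf{Proof of $(1)\Rightarrow(2)$.} This is the main obstacle, because a Følner subspace $W$ for $\mathcal H$ need not have block form. Take $C=\{u_1\}$. Then $M_n(A)e_{11}(1)$ is the space of first columns, isomorphic as a left $M_n(A)$-module to $A^n$. We test with $L=\{e_{11}(x)\mid x\in L_0\}+\operatorname{span}\{e_{i1}(1),e_{1i}(1)\mid 1\le i\le n\}$, where $L_0\subseteq A$ is an arbitrary finite-dimensional test space. The goal is to produce $V\subseteq A$ with small $L_0$-boundary.

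Write $W\subseteq A^n$, let $W_i$ be its projection to the $i$th coordinate, and put $V=W_1+\dots+W_n\subseteq A$. Since $e_{1i}(1)$ moves coordinate $i$ into coordinate $1$, the space $W+LW$ has first-coordinate projection containing $V$, so $\dim V\le\dim(W+LW)\le(1+\varepsilon)\dim W$.

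At this point I do not have a complete argument. My first plan was to compare $\dim V$ with $\dim W$ via $\dim W\le n\dim V$, and to bound $\dim(V+L_0V)$ using projections of $W+LW$. This fails: the first coordinate of $e_{11}(x)w$ is $xw_1$ only, so the boundary bound yields control of $L_0W_1$ and not of $L_0V$, and the crude inequality $\dim W\le n\dim V$ loses a factor of $n$ that is not small. The plan therefore needs to be revised so that the multipliers $e_{1i}(1)$ and $e_{i1}(1)$ are used to show that $W$ is close to $n$ copies of a single $V$.

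The alternative route is to replace $L$ by $L'=\{e_{1i}(1)\,\ell\,e_{j1}(1)\}$ together with the $e_{1i}(1)$, which pushes everything into the corner $e_{11}M_n(A)e_{11}\cong A$. One then applies the standard fact that Morita-equivalent corners and full matrix modules share module amenability in the sense of Remark~\ref{anchored-module-interpretation}: $A^n$ is amenable as an $M_n(A)$-module if and only if $A$ is amenable as an $A$-module. A convenient way to prove this is through Bartholdi's dimension-ratio characterization, using the boundary $\partial_L$ with the generating set $\{e_{ij}(g)\mid g\in K\}$ and Lemma~\ref{generating-set-folner}. The bound should come out as a sub-additivity estimate in which the boundary for $e_{11}(g)$ controls $gW_1$ and the permutations transport this control to the other coordinates. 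Once a Følner subspace of $A$ is obtained, Theorem~\ref{group-case} gives amenability of $K$. The hardest point is making this transport quantitative, so that the loss is a factor of $n$ in $\varepsilon$ only.
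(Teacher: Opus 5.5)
Your arguments for $(2)\Leftrightarrow(3)$, for field independence and for $(2)\Rightarrow(1)$ are correct and essentially the paper's: you use column-$j$ matrices with entries in a F{\o}lner space $V$ for the entry space $L_0$.

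The gap is $(1)\Rightarrow(2)$, which you leave open. The ``standard fact'' that $A^n$ is amenable as an $M_n(A)$-module if and only if $A$ is amenable as an $A$-module is not available in the paper. It is essentially the implication you are trying to prove, so citing it is circular. Your concrete test space $L=\{e_{11}(x)\mid x\in L_0\}+\operatorname{span}\{e_{i1}(1),e_{1i}(1)\}$ is also too small. As you noticed yourself, $W+LW$ then only records $L_0W_1$ in the first coordinate, so you cannot control $L_0V$.

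The missing idea is to enlarge the multiplier space so that $W+LW$ becomes a full column space. Then both sides of the F{\o}lner inequality scale with $n$. Put $E=L_0+\Bbbk 1_A$ and test, anchored at $\{u_1\}$, with $L=M_n(E)$, the matrices all of whose entries lie in $E$. With $V=\sum_i\pi_i(W)$ as you defined it, one has $M_n(E)W=(EV)^n$. The inclusion $\subseteq$ is clear. Conversely, $e_{ij}(a)w$ is the column with $a\pi_j(w)$ in position $i$ and zeros elsewhere, and these vectors span $EV$ in each coordinate. Since $1_A\in E$, we get $W\subseteq V^n\subseteq(EV)^n$, hence $W+M_n(E)W=(EV)^n$ and
\[
n\dim_{\Bbbk}(EV)=\dim_{\Bbbk}\bigl(W+M_n(E)W\bigr)\le(1+\varepsilon)\dim_{\Bbbk}(W)\le(1+\varepsilon)\,n\dim_{\Bbbk}(V).
\]
The inequality $\dim W\le n\dim V$ that you discarded as ``losing a factor of $n$'' is exactly what is needed. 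It sits on the right-hand side, and the factor $n$ cancels against the left-hand side. Cancelling gives $\dim(V+L_0V)\le\dim(EV)\le(1+\varepsilon)\dim V$ for every finite-dimensional $L_0$ and every $\varepsilon>0$. Theorem~\ref{group-case} then yields amenability of $K$. No Morita-theoretic input and no quantitative ``transport'' between coordinates is required. The argument also works verbatim for any unital algebra in place of $\Bbbk K$.
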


\begin{proof}
Write
$$
\mathcal H^{(0)}=\{u_1,\dots,u_n\},\ u_1=u,
$$
and use Lemma~\ref{finite-transitive-matrix-model} to identify
$$
A_{\Bbbk}(\mathcal H)\cong M_n(R), \ R=\Bbbk K.
$$
Under this identification, $\mathbf 1_{\{u_i\}}$ corresponds to $e_{ii}(1_R)$.

We first prove $(2)\Rightarrow(1)$. Assume that $K$ is amenable. By Theorem~\ref{group-case}, the group algebra $R=\Bbbk K$ is left algebraically amenable.

Let $C\subseteq\mathcal H^{(0)}$ be non-empty, let
$$
L\subseteq M_n(R)
$$
be finite-dimensional and let $\varepsilon>0$. Since the unit space is finite and discrete, $C$ is compact and open. Choose $u_\ell\in C$. Choose a basis
$$
A^{(1)},\dots,A^{(m)}
$$
of $L$, and let $E\subseteq R$ be the finite-dimensional subspace spanned by $1_R$ and all matrix entries of the matrices $A^{(1)},\dots,A^{(m)}$. Then every entry of every matrix in $L$ belongs to $E$.

Since $R$ is left algebraically amenable, there exists a non-zero finite-dimensional subspace $V\subseteq R$ such that
$$
\dim_{\Bbbk}(V+EV)\leq(1+\varepsilon)\dim_{\Bbbk}(V).
$$
Because $1_R\in E$, one has $V\subseteq EV$, and consequently
$$
\dim_{\Bbbk}(EV)\leq(1+\varepsilon)\dim_{\Bbbk}(V).
$$

Define
$$
W=\bigoplus_{i=1}^n e_{i\ell}(V),
$$
where
$$
e_{i\ell}(V)=\{e_{i\ell}(v)\mid v\in V\}.
$$
Thus $W$ consists of the matrices supported in the $\ell$-th column whose entries belong to $V$. It is non-zero and finite-dimensional, and
$$
\dim_{\Bbbk}(W)=n\dim_{\Bbbk}(V).
$$
Moreover,
$$
W\subseteq M_n(R)e_{\ell\ell}(1_R)\subseteq M_n(R)\sum_{u_i\in C}e_{ii}(1_R).
$$
Hence, under the inverse of $\Phi$, the space $W$ is contained in
$$
A_{\Bbbk}(\mathcal H)\mathbf 1_C.
$$

Let $A=(a_{ij})\in L$ and let
$$
w=\sum_{j=1}^n e_{j\ell}(v_j)\in W.
$$
The product $Aw$ is supported in the $\ell$-th column, and its $(i,\ell)$-entry is
$$
\sum_{j=1}^n a_{ij}v_j.
$$
Since $a_{ij}\in E$ and $v_j\in V$, this entry belongs to $EV$. Therefore
$$
LW\subseteq\bigoplus_{i=1}^n e_{i\ell}(EV).
$$
Because $V\subseteq EV$, the same ambient space also contains $W$. It follows that
$$
\begin{aligned}
\dim_{\Bbbk}(W+LW)
&\leq
n\dim_{\Bbbk}(EV)
\\
&\leq
(1+\varepsilon)n\dim_{\Bbbk}(V)
\\
&=
(1+\varepsilon)\dim_{\Bbbk}(W).
\end{aligned}
$$
Thus $\mathcal H$ is source-anchored left algebraically amenable over $\Bbbk$.

We now prove $(1)\Rightarrow(2)$. Assume that $\mathcal H$ is source-anchored left algebraically amenable over $\Bbbk$. Let $E_0\subseteq R$ be finite-dimensional and let $\varepsilon>0$. Put
$$
E=E_0+\Bbbk 1_R
$$
and define the finite-dimensional subspace
$$
M_n(E)=\{(a_{ij})\in M_n(R)\mid a_{ij}\in E\}.
$$
Apply source-anchored algebraic amenability to the compact open singleton $\{u_1\}$ and to the multiplier space $M_n(E)$. We obtain a non-zero finite-dimensional subspace
$$
W\subseteq M_n(R)e_{11}(1_R)
$$
such that
$$\dim_{\Bbbk}\left(W+M_n(E)W\right)\leq(1+\varepsilon)\dim_{\Bbbk}(W).
$$

Identify the left ideal $M_n(R)e_{11}(1_R)$ with the column space $R^n$. For $1\leq i\leq n$, let
$$
\pi_i:R^n\longrightarrow R
$$
be the $i$-th coordinate projection, and put
$$
V=\sum_{i=1}^n\pi_i(W).
$$
The space $V$ is finite-dimensional. It is non-zero because $W\neq\{0\}$.

We claim that
$$
M_n(E)W=(EV)^n.
$$
For the inclusion from left to right, let $A=(a_{ij})\in M_n(E)$ and $w=(w_1,\dots,w_n)^{\mathsf T}\in W$. The $i$-th coordinate of $Aw$ is
$$
\sum_{j=1}^n a_{ij}w_j,
$$
which belongs to $EV$ because $a_{ij}\in E$ and $w_j\in\pi_j(W)\subseteq V$. Thus
$$
M_n(E)W\subseteq(EV)^n.
$$

For the reverse inclusion, let $a\in E$, let $w\in W$, and fix indices $i$ and $j$. Multiplication by the elementary matrix $e_{ij}(a)\in M_n(E)$ produces a column vector whose $i$-th coordinate is $a\pi_j(w)$ and whose other coordinates are zero. Since
$$
V=\sum_{j=1}^n\pi_j(W),
$$
the vectors $a\pi_j(w)$ span $EV$ when $a$, $j$ and $w$ vary. Therefore every column vector having one prescribed coordinate in $EV$ and all other coordinates equal to zero belongs to $M_n(E)W$. Summing over the coordinates gives
$$
(EV)^n\subseteq M_n(E)W.
$$
This proves the claim.

Since $1_R\in E$, we have $V\subseteq EV$. Moreover,
$$
W\subseteq V^n\subseteq(EV)^n=M_n(E)W.
$$
Hence
$$
W+M_n(E)W=M_n(E)W=(EV)^n.
$$
It follows that
$$
\begin{aligned}
n\dim_{\Bbbk}(EV)&=\dim_{\Bbbk}\left(W+M_n(E)W\right)\\
&\leq(1+\varepsilon)\dim_{\Bbbk}(W) \\
&\leq(1+\varepsilon)n\dim_{\Bbbk}(V).
\end{aligned}
$$
After cancelling $n$, we obtain
$$
\dim_{\Bbbk}(EV)\leq(1+\varepsilon)\dim_{\Bbbk}(V).
$$
Finally, since $1_R\in E$ and $E_0\subseteq E$,
$$
V+E_0V\subseteq EV.
$$
Therefore
$$
\dim_{\Bbbk}(V+E_0V)\leq(1+\varepsilon)\dim_{\Bbbk}(V).
$$
Thus $R=\Bbbk K$ is left algebraically amenable. By Theorem~\ref{group-case}, the group $K$ is amenable. This proves $(1)\Rightarrow(2)$.

For every $x\in\mathcal H^{(0)}$, choose an arrow $\gamma\in\mathcal H$ with
$$
s(\gamma)=u,\ r(\gamma)=x.
$$
Conjugation by $\gamma$ gives a group isomorphism
$$
K\longrightarrow\mathcal H_x^x, \ k\longmapsto\gamma k\gamma^{-1}.
$$
Hence $(2)$ and $(3)$ are equivalent.

Finally, condition~$(2)$ is independent of the coefficient field. If condition~$(1)$ holds over one field, then $(2)$ holds; applying $(2)\Rightarrow(1)$ over any other field proves the final assertion.
\end{proof}

\begin{corollary}\label{finite-orbit-isotropy}
Let $\mathcal G$ be a countable discrete groupoid, let $O\subseteq\mathcal G^{(0)}$ be a finite orbit, and let $u\in O$. Then the reduction $\mathcal G|_O$ is source-anchored left algebraically amenable over $\Bbbk$ if and only if the isotropy group $\mathcal G_u^u$ is amenable.
\end{corollary}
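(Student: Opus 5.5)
The plan is to reduce the statement directly to Theorem~\ref{finite-transitive-isotropy} applied to $\mathcal H=\mathcal G|_O$. For this, three things about $\mathcal H$ need checking: it is a countable transitive discrete groupoid, its unit space is finite and non-empty, and its isotropy group at $u$ is exactly $\mathcal G_u^u$.

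First I verify the structure of the reduction. Since $O$ is an orbit, it is invariant. If $s(\gamma)\in O$, say $s(\gamma)=r(\delta)$ with $s(\delta)=w$ and $O=\mathcal O_w$, then $r(\gamma)=r(\gamma\delta)\in\mathcal O_w$; the converse follows by inverting $\gamma$. Hence
$$
\mathcal G|_O=s^{-1}(O)=r^{-1}(O)
$$
is a subgroupoid with unit space $O$. This set is finite and non-empty because $u\in O$. The reduction is countable because $\mathcal G$ is, and it is discrete with the subspace topology. It is transitive: for $x,y\in O=\mathcal O_u$, pick $\alpha,\beta$ with source $u$ and ranges $x,y$. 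Then $\beta\alpha^{-1}$ is an arrow of $\mathcal G$ from $x$ to $y$, and it lies in $\mathcal G|_O$ because its source and range are in $O$. Finally, every arrow of $\mathcal G$ with source and range $u$ lies in $\mathcal G|_O$, so
$$
(\mathcal G|_O)_u^u=\mathcal G_u^u.
$$

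With these facts in place, the equivalence $(1)\Leftrightarrow(2)$ of Theorem~\ref{finite-transitive-isotropy} gives the claim at once: $\mathcal G|_O$ is source-anchored left algebraically amenable over $\Bbbk$ if and only if $\mathcal G_u^u$ is amenable. No step here is a real obstacle. The only point that needs care is the invariance and transitivity check, which guarantees that the reduction is a genuinely transitive groupoid with unit space exactly $O$ and the same isotropy group. We also note that $O$ is open in the discrete unit space, although openness is not needed to apply the theorem, since the theorem is intrinsic to $\mathcal G|_O$.
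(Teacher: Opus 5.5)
Your proposal is correct and follows the same route as the paper: it applies Theorem~\ref{finite-transitive-isotropy} to $\mathcal G|_O$, after noting that the reduction is a countable transitive discrete groupoid with finite unit space $O$ and isotropy group $\mathcal G_u^u$ at $u$. The only difference is that you explicitly verify the invariance, transitivity and isotropy facts, which the paper's one-line proof takes for granted.
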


\begin{proof}
The reduction $\mathcal G|_O$ is a transitive discrete groupoid with finite unit space $O$, and its isotropy group at $u$ is $\mathcal G_u^u$. The conclusion follows from Theorem~\ref{finite-transitive-isotropy}.
\end{proof}

\begin{remark}\label{finite-versus-infinite-orbits}
Lemma~\ref{infinite-transitive-discrete-groupoid} and Theorem~\ref{finite-transitive-isotropy} exhibit a sharp dichotomy. On an infinite orbit, source anchoring still permits the F{\o}lner subspace to escape through a range unit not seen by the prescribed finite-dimensional multiplier space, so the isotropy may be non-amenable. On a finite orbit, no such escape is possible: the finite matrix amplification forces the F{\o}lner estimate to descend to the isotropy group algebra.
\end{remark}

\subsection{Hereditary subgroupoid amenability}

Proposition~\ref{arbitrary-subgroupoids-fail} motivates recording separately the genuinely hereditary form of the property.

\begin{definition}\label{hereditarily-algebraically-amenable}
A Hausdorff ample groupoid $\mathcal G$ is called \emph{hereditarily source-anchored left algebraically amenable over $\Bbbk$} if every non-empty Hausdorff ample subgroupoid $\mathcal H\subseteq\mathcal G$, endowed with the subspace topology, is source-anchored left algebraically amenable over $\Bbbk$.
\end{definition}

The hereditary property admits a complete isotropy characterization for countable discrete groupoids.

\begin{theorem}\label{hereditary-discrete-characterization}
Let $\mathcal G$ be a countable discrete groupoid. The following conditions are equivalent:
\begin{enumerate}
    \item $\mathcal G$ is hereditarily source-anchored left algebraically amenable over $\Bbbk$;
    \item every isotropy group $\mathcal G_u^u$, with $u\in\mathcal G^{(0)}$, is amenable.
\end{enumerate}
Moreover, if these conditions hold for one field, then they hold for every field.
\end{theorem}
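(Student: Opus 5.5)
The plan is to prove the two implications separately, reducing everything to results already established: Theorem~\ref{group-case} for one-unit groupoids, Proposition~\ref{disjoint-unions} for decompositions into components, Lemma~\ref{infinite-transitive-discrete-groupoid} for infinite orbits, and Theorem~\ref{finite-transitive-isotropy} for finite orbits.

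For $(1)\Rightarrow(2)$, fix $u\in\mathcal G^{(0)}$. The isotropy group $\mathcal G_u^u$ is a non-empty subgroupoid of $\mathcal G$. With the subspace topology it is discrete, hence a Hausdorff ample one-unit groupoid. Hereditary amenability makes it source-anchored left algebraically amenable over $\Bbbk$, and Theorem~\ref{group-case} then shows that $\mathcal G_u^u$ is an amenable group.

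For $(2)\Rightarrow(1)$, let $\mathcal H\subseteq\mathcal G$ be a non-empty subgroupoid. Every subset of $\mathcal G$ is discrete in the subspace topology, so $\mathcal H$ is automatically a countable discrete, hence Hausdorff ample, groupoid.

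\begin{enumerate}
    \item Each isotropy group $\mathcal H_u^u$ is a subgroup of $\mathcal G_u^u$. By the subgroup permanence of amenability recalled in Section~\ref{preliminaries}, every isotropy group of $\mathcal H$ is amenable.
    \item Partition $\mathcal H^{(0)}$ into the $\mathcal H$-orbits $O$. Every arrow of $\mathcal H$ has source and range in a single orbit, and each reduction $\mathcal H|_O$ is open and closed because $\mathcal H$ is discrete. Hence $\mathcal H$ is the topological disjoint union $\bigsqcup_O\mathcal H|_O$. By Proposition~\ref{disjoint-unions}, it suffices to show that each $\mathcal H|_O$ is algebraically amenable.
    \item Each $\mathcal H|_O$ is a countable transitive discrete groupoid whose isotropy groups are those of $\mathcal H$. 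If $O$ is infinite, Lemma~\ref{infinite-transitive-discrete-groupoid} applies directly, with no hypothesis on isotropy. If $O$ is finite, Theorem~\ref{finite-transitive-isotropy}, implication $(2)\Rightarrow(1)$, applies because the isotropy is amenable by the first step.
\end{enumerate}

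The field independence follows because condition $(2)$ makes no reference to $\Bbbk$. If $(1)$ holds over one field, then $(2)$ holds, and the argument above gives $(1)$ over any other field.

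The only step requiring care is the decomposition into orbit reductions. I must check that $\bigsqcup_O\mathcal H|_O$ really is a topological disjoint union of Hausdorff ample groupoids in the sense of Proposition~\ref{disjoint-unions}. This holds because each piece is clopen by discreteness and there are no arrows between different orbits. I must also check that the non-emptiness conventions hold, which they do since every orbit is non-empty. No F{\o}lner estimates need to be redone; everything is inherited from the earlier results.
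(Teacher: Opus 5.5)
Your proposal is correct and follows the paper's proof essentially step for step. For $(1)\Rightarrow(2)$ you pass to the isotropy subgroupoid and apply Theorem~\ref{group-case}. For $(2)\Rightarrow(1)$ you use subgroup permanence, the decomposition of $\mathcal H$ into orbit reductions as a topological disjoint union, Lemma~\ref{infinite-transitive-discrete-groupoid} and Theorem~\ref{finite-transitive-isotropy} on the two kinds of orbits, and Proposition~\ref{disjoint-unions}; field independence then comes from condition $(2)$.
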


\begin{proof}
Assume first that $\mathcal G$ is hereditarily source-anchored left algebraically amenable. Fix $u\in\mathcal G^{(0)}$. The isotropy group $\mathcal G_u^u$, viewed as a one-unit groupoid, is a discrete Hausdorff ample subgroupoid of $\mathcal G$. Hence it is source-anchored left algebraically amenable. By Theorem~\ref{group-case}, the group $\mathcal G_u^u$ is amenable. This proves $(1)\Rightarrow(2)$.

Conversely, suppose that every isotropy group of $\mathcal G$ is amenable. Let $\mathcal H\subseteq\mathcal G$ be a non-empty subgroupoid. Since $\mathcal G$ is discrete, $\mathcal H$ is also a countable discrete Hausdorff ample groupoid. For every $u\in\mathcal H^{(0)}$,
$$
\mathcal H_u^u\leq\mathcal G_u^u.
$$
Subgroups of amenable groups are amenable, so every isotropy group of $\mathcal H$ is amenable.

Decompose $\mathcal H$ into its orbit subgroupoids:
$$
\mathcal H=\bigsqcup_{O\in\mathcal H^{(0)}/\mathcal H}\mathcal H|_O.
$$
Because $\mathcal H$ is discrete, each orbit subgroupoid is open, and the displayed decomposition is a topological disjoint union. Each $\mathcal H|_O$ is transitive. If $O$ is infinite, Lemma~\ref{infinite-transitive-discrete-groupoid} shows that $\mathcal H|_O$ is algebraically amenable. If $O$ is finite, its isotropy groups are amenable, and Theorem~\ref{finite-transitive-isotropy} gives the same conclusion. Proposition~\ref{disjoint-unions} now implies that $\mathcal H$ is algebraically amenable. Since $\mathcal H$ was arbitrary, $\mathcal G$ is hereditarily source-anchored left algebraically amenable. This proves $(2)\Rightarrow(1)$.

Condition $(2)$ is independent of the coefficient field. Hence the equivalence shows that the hereditary property is also independent of the field in the countable discrete setting.
\end{proof}

\begin{corollary}\label{hereditary-discrete-classes}
Each of the following countable discrete groupoids is hereditarily source-anchored left algebraically amenable over every field:
\begin{enumerate}
    \item every principal groupoid;
    \item every groupoid whose isotropy groups are finite;
    \item every groupoid whose isotropy groups are abelian, locally finite or solvable;
    \item every topological disjoint union of amenable groups.
\end{enumerate}
\end{corollary}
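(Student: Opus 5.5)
The plan is to reduce each of the four items to condition $(2)$ of Theorem~\ref{hereditary-discrete-characterization}: for a countable discrete groupoid $\mathcal G$, it suffices to check that every isotropy group $\mathcal G_u^u$ is amenable. The theorem then gives hereditary source-anchored left algebraic amenability over $\Bbbk$. Its final assertion, or simply the fact that the isotropy condition makes no reference to $\Bbbk$, upgrades this to every field.

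The items are then handled as follows.
\begin{enumerate}
    \item For a principal groupoid, every isotropy group is trivial. A trivial group is finite and hence amenable by the normalized counting measure.
    \item If every isotropy group is finite, amenability follows from the example of finite groups recalled in Section~\ref{preliminaries}.
    \item If the isotropy groups are abelian, locally finite or solvable, amenability is exactly the corollary following the permanence proposition in Section~\ref{preliminaries}. That corollary asserts amenability of finite, abelian, locally finite and solvable groups, and it needs no countability assumption.
    \item Let $\mathcal G=\bigsqcup_{i}G_i$ be a topological disjoint union of amenable groups $G_i$, each regarded as a one-unit groupoid. We should assume it is countable, so that it falls within the scope of the theorem. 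Its unit space is $\{e_{G_i}\}_i$, and the isotropy group at $e_{G_i}$ is $G_i$ itself, since no arrows connect different components. Hence every isotropy group is amenable.
\end{enumerate}

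There is no substantive obstacle. The only points requiring care are the hypotheses of Theorem~\ref{hereditary-discrete-characterization}: each groupoid must be countable and discrete, which is built into the statement of the corollary. For item $(4)$, one must also verify the identification of the isotropy groups of the disjoint union. This follows directly from the definition of $\mathcal G_u^u$, since an arrow with source and range both equal to $e_{G_i}$ must lie in the component $G_i$.
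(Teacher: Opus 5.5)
Your proposal is correct and follows essentially the same route as the paper. You reduce each class to condition $(2)$ of Theorem~\ref{hereditary-discrete-characterization} by checking that every isotropy group is trivial, finite, abelian, locally finite, solvable, or one of the given amenable groups, and then let the field-independent isotropy criterion give the conclusion over every field. Countability in item $(4)$ is already part of the hypothesis, so you need not assume it separately.
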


\begin{proof}
A principal groupoid has trivial isotropy groups. Finite, abelian, locally finite and solvable groups are amenable, as recalled in Section~\ref{preliminaries}. A disjoint union of amenable groups has amenable isotropy at every unit. Theorem~\ref{hereditary-discrete-characterization} applies in each case.
\end{proof}

\begin{remark}\label{subgroupoid-summary}
The subgroupoid picture can now be summarized as follows.
\begin{enumerate}
    \item Open invariant reductions, equivalently open source-saturated subgroupoids, inherit algebraic amenability from the ambient groupoid by Proposition~\ref{open-invariant-reduction}.
    \item Arbitrary open subgroupoids need not inherit the property, by Proposition~\ref{arbitrary-subgroupoids-fail}.
    \item Requiring hereditary algebraic amenability restores the subgroup property by definition, and Theorem~\ref{hereditary-discrete-characterization} gives an exact criterion in the countable discrete case: all isotropy groups must be amenable.
\end{enumerate}
Thus the classical subgroup property is recovered in the one-unit case and in the hereditary discrete class described above. These are positive cases, not an exhaustive classification of subgroupoid inclusions for which inheritance holds.
\end{remark}

\section{Finite-index extensions of transformation groupoids}\label{finite-index-transformation-groupoids}

We now consider global actions on compact zero-dimensional spaces. In this setting, compact open bisections retain the labels of the acting group, and these labels provide both a F{\o}lner construction and an obstruction to algebraic amenability. We first show that the source-anchored and unanchored conditions are equivalent to amenability of the acting group. We then give a direct induction procedure for finite-index subgroups, with an explicit estimate that preserves the relative algebraic boundary.

Throughout this section, $G$ is a countable discrete group acting by homeomorphisms on a non-empty compact Hausdorff space $X$ with a basis of clopen subsets. The coefficient field $\Bbbk$ is arbitrary. As in Section~\ref{algebraic-folner}, algebraic amenability of a groupoid refers to the source-anchored left-handed condition unless explicitly stated otherwise.

\subsection{Transformation groupoids and their homogeneous pieces}

\begin{definition}[{\cite[Example 3.3]{ClarkHazrat20}; cf. \cite[Example 4.10]{Ste09}}]
\label{transformation-groupoid-definition}
The \emph{transformation groupoid} $X\rtimes G$ is the set $X\times G$ with range and source maps
$$
r(x,g)=x,\ s(x,g)=g^{-1}x,
$$
composition $(x,g)(g^{-1}x,h)=(x,gh)$, and inversion
$$
(x,g)^{-1}=(g^{-1}x,g^{-1}).
$$
Its unit space is identified with $X$ through $x\mapsto(x,e_G)$, and its topology is the product topology. We use the range-first convention for its arrows.
\end{definition}

For a subgroup $Q\leq G$, write
$$
A_Q=A_{\Bbbk}(X\rtimes Q),\ D=C^{\mathrm{lc}}(X,\Bbbk).
$$
Here $D$ is the algebra of locally constant functions on $X$, identified with the functions supported on the unit space. Compactness makes their supports compact. We identify $A_Q$ with its extension-by-zero image in $A_G$. This is an algebra embedding because $X\rtimes Q$ is an open subgroupoid and products of arrows whose labels lie in $Q$ again have labels in $Q$.

\begin{lemma}\label{transformation-homogeneous-model}
The groupoid $X\rtimes G$ is Hausdorff and ample. Its rectangular compact open bisections satisfy
$$
(D_1\times\{g\})(D_2\times\{h\})=(D_1\cap gD_2)\times\{gh\}
$$
for clopen $D_1,D_2\subseteq X$. Every compact open bisection $U$ has a finite disjoint decomposition
$$
U=\bigsqcup_{g\in K_U}D_{U,g}\times\{g\},
$$
where $K_U\subseteq G$ is finite and each $D_{U,g}$ is clopen.

For $g\in G$, set $u_g=\mathbf 1_{X\times\{g\}}$. Then $u_g$ is invertible in $A_G$, with inverse $u_{g^{-1}}$, and
$$
u_g u_h=u_{gh},\ u_g f=(f\circ g^{-1})u_g\quad(f\in D).
$$
Every element of $A_G$ has a unique finite expression $\sum_g f_g u_g$, with $f_g\in D$, and also a unique finite expression $\sum_g u_g d_g$, with $d_g\in D$. In particular,
$$
A_G=\bigoplus_{g\in G}D u_g
=\bigoplus_{g\in G}u_g D.
$$
All products in these formulas are convolution products.
\end{lemma}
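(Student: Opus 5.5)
I will verify the structural claims about $X\rtimes G$ in order: topology and ampleness first, then the bisection decomposition, and finally the algebraic identities and the direct-sum decompositions.

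\emph{Ampleness and Hausdorffness.} $X\times G$ carries the product of a compact Hausdorff zero-dimensional space with a discrete space, so it is Hausdorff. The structure maps are continuous: $r$ is a projection, $s(x,g)=g^{-1}x$ is continuous on each open slice $X\times\{g\}$, and on each pair of slices multiplication and inversion are given by continuous formulas. On a slice $D\times\{g\}$ the map $s$ is the homeomorphism $x\mapsto g^{-1}x$ onto $g^{-1}D$, so the sets $D\times\{g\}$ with $D$ clopen are compact open bisections. They form a basis, so the groupoid is étale and ample. The product formula follows directly from the composition rule. The arrow $(x,g)$ is composable with $(y,h)$, $y\in D_2$, exactly when $g^{-1}x=y$, that is, $x\in gD_2$, and the product is $(x,gh)$.

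\emph{Decomposition of bisections.} Let $U$ be a compact open bisection. Compactness gives a finite set $K_U=\{g\mid U\cap(X\times\{g\})\neq\varnothing\}$. Each piece $U\cap (X\times\{g\})$ is open and closed in $U$, hence compact open, and it equals $D_{U,g}\times\{g\}$ with $D_{U,g}=r(\text{piece})$ clopen.

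\emph{The elements $u_g$.} Since $X$ is compact, $X\times\{g\}$ is a compact open bisection. The product formula gives $u_gu_h=u_{gh}$, because $X\cap gX=X$, and $u_e=\mathbf 1_X$ is the identity of $A_G$ by Proposition~\ref{steinberg-local-units}. Hence $u_g$ is invertible with inverse $u_{g^{-1}}$. For the commutation relation it suffices, by linearity, to take $f=\mathbf 1_D$. Then
\[
u_g\mathbf 1_D=\mathbf 1_{gD\times\{g\}}=\mathbf 1_{gD}u_g,
\]
and $\mathbf 1_{gD}=\mathbf 1_D\circ g^{-1}$.

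\emph{Expansions and direct sums.} For existence, $A_G$ is spanned by the characteristic functions of compact open bisections, and by the decomposition each of these is a finite sum of terms $\mathbf 1_{D\times\{g\}}=\mathbf 1_Du_g$. For uniqueness, $f_gu_g$ is the function $(x,g)\mapsto f_g(x)$ supported on the slice $X\times\{g\}$. Distinct slices are disjoint, so the coefficients can be recovered by restricting to each slice; in other words, the sum $\bigoplus_g Du_g$ is direct. The right-hand version $u_gd_g=(d_g\circ g^{-1})u_g$ reduces to the left-hand one by the commutation relation, because $f\mapsto f\circ g^{-1}$ is a bijection of $D$.

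I expect no real obstacle. The only point that needs care is the sign and convention bookkeeping in $u_gf=(f\circ g^{-1})u_g$ under the range-first convention, which I will check on a single arrow $(x,g)$.
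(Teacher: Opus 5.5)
Your proof is correct and follows essentially the same route as the paper's: you check slices and composability, decompose a compact open bisection into finitely many slices, use the rectangular product formula for the $u_g$ identities, and get uniqueness by restricting to disjoint slices, transferring it to the right-coefficient form via $u_gd=(d\circ g^{-1})u_g$. The only cosmetic difference is that you derive existence of $\sum_g f_gu_g$ from $\mathbf 1_{D\times\{g\}}=\mathbf 1_Du_g$ and the bisection decomposition, whereas the paper restricts an arbitrary element to the finitely many slices meeting its support. Both arguments are equally valid.
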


\begin{proof}
On $D_1\times\{g\}$, the range and source maps are homeomorphisms onto $D_1$ and $g^{-1}D_1$, respectively. Such sets form a compact open basis, and $X\times G$ is Hausdorff. The multiplication formula follows by checking composability. The projection of a compact open bisection $U$ onto the discrete space $G$ is finite; its slices $D_{U,g}$ are open and compact in $X$, hence clopen. This proves the decomposition.

The convolution identities follow from the rectangular multiplication formula. In particular, $u_{e_G}=\mathbf 1_X$ is the identity and $u_g u_{g^{-1}}=u_{g^{-1}}u_g=\mathbf 1_X$. If $a\in A_G$, compactness of its support implies that only finitely many slices $X\times\{g\}$ meet that support. Its restriction to each such slice is a function $f_g\in D$, giving $a=\sum_g f_g u_g$. The expression is unique because distinct slices are disjoint. Since
$$
f_g u_g=u_g(f_g\circ g),
$$
the right-coefficient expression is also unique.
\end{proof}

In particular, $\delta_g\mapsto u_g$ defines a unital embedding $\Bbbk G\hookrightarrow A_G$. It is injective even when the action on $X$ is not faithful: distinct group elements still label disjoint slices of the transformation groupoid. This observation concerns transformation groupoids, not quotients obtained by identifying germs.

\subsection{The compact-action criterion}

\begin{proposition}\label{amenable-compact-transformation}
If $G$ is amenable, then $X\rtimes G$ is source-anchored left algebraically amenable over every field.
\end{proposition}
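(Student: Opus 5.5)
We take the anchored ideal, use the crossed-product coordinates of Lemma~\ref{transformation-homogeneous-model}, and let a F{\o}lner set of $G$ supply the group directions. Fix a non-empty clopen $C\subseteq X$, a finite-dimensional $L\subseteq A_G$ and $\varepsilon>0$; by Proposition~\ref{anchored-finite-dimensional-test} it suffices to find a non-zero finite-dimensional $W\subseteq A_G\mathbf 1_C$ with $\dim(W+LW)\leq(1+\varepsilon)\dim W$. Write a basis of $L$ as $\sum_{g\in K} f_g u_g$ with $K\subseteq G$ finite and $f_g\in D$. Let $E\subseteq D$ be the finite-dimensional unital subalgebra generated by all coefficients $f_g$ and by $\mathbf 1_C$. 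Since $X$ is compact, $E$ is spanned by the indicators of the atoms of a finite clopen partition $\mathcal P$ of $X$.

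The difficulty is that $u_g f=(f\circ g^{-1})u_g$, so multiplying by $L$ moves coefficients to translated partitions. To absorb this, we build $W$ with the group label on the right of the coefficient. Choose a finite $F\subseteq G$ with $|KF|\leq(1+\varepsilon)|F|$; this is possible by the left F{\o}lner condition for $K^{-1}$, as in the proof of Theorem~\ref{group-case}, using $|F\cup KF|\leq|F|+\sum_{k\in K}|kF\setminus F|$ with $\varepsilon/|K|$. Anchoring means the support must have source in $C$. Arrows of $D u_t$ are $(x,t)$ with source $t^{-1}x$, so $u_t\mathbf 1_C=\mathbf 1_{tC}u_t$. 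We therefore set
\[
W=\operatorname{span}\{\mathbf 1_P\,u_t\mathbf 1_C \mid t\in F,\ P\in\mathcal P_t\},
\]
where $\mathcal P_t$ is a finite clopen partition chosen so that $W$ is stable enough under the coefficients. A cleaner choice is to use right coefficients: $W=\bigoplus_{t\in F}u_t B$, with $B=E'\mathbf 1_C$ for a finite-dimensional unital subalgebra $E'\subseteq D$. Then $f_gu_g\cdot u_t b=u_{gt}\,(f_g\circ (gt))\,b$. This lies in $u_{gt}B$ provided $B$ is stable under multiplication by $f\circ s$ for $f\in E$ and $s\in KF$. So take $E'$ to be the subalgebra generated by $\{f\circ s\mid f\in E,\ s\in KF\cup F\}$; it is still finite-dimensional and unital.

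With this choice, $W+LW\subseteq\bigoplus_{s\in F\cup KF}u_sB$. Uniqueness of the right-coefficient expansion in Lemma~\ref{transformation-homogeneous-model} shows the sum is direct and that each $u_s B$ has dimension $\dim B$. Hence $\dim(W+LW)\leq|F\cup KF|\dim B\leq(1+\varepsilon)|F|\dim B=(1+\varepsilon)\dim W$. Moreover $B\neq0$, because $\mathbf 1_C\in B$ and $C\neq\varnothing$, and $W\subseteq A_G\mathbf 1_C$ since $B=B\mathbf 1_C$. The step that needs care is exactly this choice: $B$ must depend only on $K$ and $F$, and not on iterated translates. The point is that only one multiplication by $L$ occurs, so the finite set of translates $KF\cup F$ suffices. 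One also checks that $u_{gt}$ with $gt\in F$ lands in $W$ again, which holds because $W$ uses the same $B$ in every slot. The argument never uses properties of $\Bbbk$, so the conclusion holds over every field.
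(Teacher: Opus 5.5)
Your argument is correct and is essentially the paper's proof. The paper takes $W=\operatorname{span}_{\Bbbk}\{u_s\mathbf 1_P\mid s\in F,\ P\in\mathcal{Q}\}$, where $\mathcal{Q}$ is a clopen partition of $C$ on whose members all translates $f_{i,k}\circ h$ with $h\in KF$ are constant. That is exactly your $\bigoplus_{t\in F}u_tB$ with $B$ spanned by the indicators of the atoms of $\mathcal{Q}$, and it gives the same bound $\dim_{\Bbbk}(W+LW)\leq|KF|\,|\mathcal{Q}|$.

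The abandoned left-coefficient attempt with the partitions $\mathcal{P}_t$ at the start of your write-up is superfluous and should be removed.
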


\begin{proof}
Fix a non-empty clopen set $C\subseteq X$, a finite-dimensional space $L\subseteq A_G$, and $\varepsilon>0$. We use Proposition~\ref{anchored-finite-dimensional-test}. If $L=\{0\}$, the space $\Bbbk\mathbf 1_C$ suffices. Otherwise, choose a basis $a_1,\dots,a_q$ of $L$ and a finite set $K\subseteq G$ containing $e_G$ such that
$$
a_i=\sum_{k\in K}f_{i,k}u_k,\ f_{i,k}\in D.
$$

The group F{\o}lner condition gives a non-empty finite set $F\subseteq G$ with
$$
|KF|\leq(1+\varepsilon)|F|.
$$
Indeed, choose $F$ so that $|kF\setminus F|<\varepsilon|F|/|K|$ for every $k\in K$. Since $e_G\in K$, the union bound gives the displayed estimate. This version follows from the condition in the preliminaries by applying it to $K^{-1}$ and using $|kF\setminus F|=|F\setminus k^{-1}F|$.

The finitely many functions
$$
(f_{i,k}\circ h)|_C,\ 1\leq i\leq q,\ k\in K,\ h\in KF,
$$
are locally constant on the compact space $C$. Each has finite image, and their fibers are clopen. The non-empty intersections of these fibers therefore form a finite clopen partition $\mathcal{Q}$ of $C$ on whose members all these functions are constant. Its members are also clopen in $X$.

For a finite set $S\subseteq G$, put
$$
W(S,\mathcal{Q})=\operatorname{span}_{\Bbbk}\{u_s\mathbf 1_P\mid s\in S,\ P\in\mathcal{Q}\}.
$$
Since $u_s\mathbf 1_P=\mathbf 1_{sP\times\{s\}}$, these functions have pairwise disjoint non-empty supports. Consequently,
$$
\dim_{\Bbbk}W(S,\mathcal{Q})=|S|\,|\mathcal{Q}|.
$$
Moreover, their sources lie in $P\subseteq C$, so $W(S,\mathcal{Q})\subseteq A_G\mathbf 1_C$.

Take $W=W(F,\mathcal{Q})$. For $f\in F$ and $P\in\mathcal{Q}$, we have
$$
a_i(u_f\mathbf 1_P)=\sum_{k\in K}f_{i,k}u_{kf}\mathbf 1_P.
$$
The function $f_{i,k}$ is constant on $kfP$, by the choice of $\mathcal{Q}$. Writing this constant as $\lambda_{i,k,f,P}$, we obtain
$$
f_{i,k}u_{kf}\mathbf 1_P=\lambda_{i,k,f,P}u_{kf}\mathbf 1_P.
$$
Thus $LW\subseteq W(KF,\mathcal{Q})$. Since $F\subseteq KF$, also $W\subseteq W(KF,\mathcal{Q})$, and hence
$$
\dim_{\Bbbk}(W+LW)\leq |KF|\,|\mathcal{Q}|\leq(1+\varepsilon)|F|\,|\mathcal{Q}|=(1+\varepsilon)\dim_{\Bbbk}(W).
$$
The space $W$ is non-zero and has the prescribed source anchor.
\end{proof}

To prove the converse, we need a finite-dimensional observation about regular modules. It is a module version of the F{\o}lner condition; compare \cite[Definition 1.3]{Bartholdi08}. The argument is included because algebraic amenability does not, in general, pass to arbitrary subalgebras.

\begin{lemma}\label{finite-regular-copies-folner}
Let $R$ be a non-zero unital $\Bbbk$-algebra, let $E\subseteq R$ be finite-dimensional with $1_R\in E$, let $m\geq1$, and let $\varepsilon>0$. Let $R$ act coordinatewise on $R^m$. If a non-zero finite-dimensional subspace $W\subseteq R^m$ satisfies
$$
\dim_{\Bbbk}(EW)\leq(1+\varepsilon)\dim_{\Bbbk}(W),
$$
then there is a non-zero finite-dimensional subspace $V\subseteq R$ such that
$$
\dim_{\Bbbk}(EV)\leq(1+\varepsilon)\dim_{\Bbbk}(V).
$$
\end{lemma}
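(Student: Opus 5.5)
The plan is to mimic the coordinate-projection argument used in the proof of $(1)\Rightarrow(2)$ of Theorem~\ref{finite-transitive-isotropy}. There, however, the multiplier space $M_n(E)$ mixed coordinates, which forced $W+M_n(E)W=(EV)^n$. Here $E$ acts diagonally, so $EW$ need not fill out a product of coordinate spaces. Taking $V=\sum_i\pi_i(W)$ therefore no longer gives a usable bound, and a different choice is needed.

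I will use a filtration by coordinates instead. For $0\le j\le m$, let
$$
W_j=\{w\in W\mid \pi_i(w)=0 \text{ for all } i>j\},
$$
so that $0=W_0\subseteq W_1\subseteq\cdots\subseteq W_m=W$. Put $V_j=\pi_j(W_j)\subseteq R$. Then $\pi_j$ induces an isomorphism $W_j/W_{j-1}\cong V_j$, and hence
$$
\dim_{\Bbbk}W=\sum_{j=1}^m\dim_{\Bbbk}V_j.
$$
Apply the same construction to the finite-dimensional space $EW\subseteq R^m$, which contains $W$ because $1_R\in E$. Writing $(EW)_j$ for its filtration pieces, we get $\dim_{\Bbbk}(EW)=\sum_j\dim_{\Bbbk}\pi_j((EW)_j)$.

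The key containment is $EV_j\subseteq\pi_j((EW)_j)$. If $a\in E$ and $w\in W_j$, then $aw$ is computed coordinatewise. It therefore still vanishes in the coordinates $i>j$, so $aw\in(EW)_j$, and $\pi_j(aw)=a\pi_j(w)$. Taking spans gives the containment, and summing over $j$ yields
$$
\sum_{j=1}^m\dim_{\Bbbk}(EV_j)\le\dim_{\Bbbk}(EW)\le(1+\varepsilon)\sum_{j=1}^m\dim_{\Bbbk}V_j .
$$
This step, where the diagonal action is used to preserve the filtration, is the crux of the argument. I expect the main obstacle to be making sure the inequality is not lost here, namely checking that the dimension of $EW$ really splits as the sum of the graded pieces $\pi_j((EW)_j)$. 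That is the general fact that a filtration by coordinate-vanishing subspaces has successive quotients embedding into $R$ via $\pi_j$, and it holds for any subspace of $R^m$.

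To finish, I use an averaging (pigeonhole) step. Since $W\neq0$, some $V_j\neq0$. If every non-zero $V_j$ satisfied $\dim_{\Bbbk}(EV_j)>(1+\varepsilon)\dim_{\Bbbk}V_j$, then summing over these $j$, with $EV_j=0$ whenever $V_j=0$, would give a strict inequality contradicting the display above. Hence some non-zero $V=V_j$ satisfies $\dim_{\Bbbk}(EV)\le(1+\varepsilon)\dim_{\Bbbk}V$. This $V$ is finite-dimensional because it is a quotient of the subspace $W_j$ of $W$.
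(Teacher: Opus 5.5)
Your proposal is correct and follows the paper's proof essentially step for step. The paper uses the same coordinate filtration $F_j=R^j\oplus\{0\}^{m-j}$ with $V_j=\pi_j(W\cap F_j)$ and $Z_j=\pi_j(EW\cap F_j)$, obtains $\dim_{\Bbbk}(EW)=\sum_j\dim_{\Bbbk}Z_j$ by rank--nullity and telescoping (the point you flagged, valid for any subspace of $R^m$), derives $EV_j\subseteq Z_j$ from the coordinatewise action, and finishes with the same pigeonhole argument over the non-zero $V_j$.
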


\begin{proof}
Put $U=EW$, and let
$$
F_j=R^j\oplus\{0\}^{m-j}\subseteq R^m, \ 0\leq j\leq m.
$$
For $1\leq j\leq m$, let $\pi_j$ be the $j$th coordinate projection and define
$$
V_j=\pi_j(W\cap F_j),\ Z_j=\pi_j(U\cap F_j).
$$
The kernel of $\pi_j$ on $W\cap F_j$ is $W\cap F_{j-1}$. The corresponding assertion holds for $U$. Rank-nullity and telescoping therefore give
$$
\dim_{\Bbbk}(W)=\sum_{j=1}^m\dim_{\Bbbk}(V_j),\ \dim_{\Bbbk}(U)=\sum_{j=1}^m\dim_{\Bbbk}(Z_j).
$$

For $a\in E$ and $w\in W\cap F_j$, the vector $aw$ belongs to $U\cap F_j$ and has $j$th coordinate $a\pi_j(w)$. It follows that $EV_j\subseteq Z_j$. Thus
$$
\sum_{j=1}^m\dim_{\Bbbk}(EV_j)\leq\dim_{\Bbbk}(U)\leq(1+\varepsilon)\sum_{j=1}^m\dim_{\Bbbk}(V_j).
$$
At least one $V_j$ is non-zero because $W\neq\{0\}$. If every non-zero $V_j$ violated the desired inequality, summing those strict violations would contradict the last display. Choosing an index for which it holds and setting $V=V_j$ proves the lemma.
\end{proof}

\begin{theorem}\label{compact-transformation-criterion}
Under the standing hypotheses on $G\curvearrowright X$, the following are equivalent:
\begin{enumerate}
\item $G$ is amenable;
\item $A_{\Bbbk}(X\rtimes G)$ is left algebraically amenable;
\item $X\rtimes G$ is weakly left bisectionally algebraically amenable;
\item $X\rtimes G$ is source-anchored left algebraically amenable.
\end{enumerate}
Consequently, these properties are independent of the coefficient field for global actions on non-empty compact zero-dimensional spaces.
\end{theorem}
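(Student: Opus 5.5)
The equivalences split into formal parts and one substantive obstruction. Proposition~\ref{amenable-compact-transformation} gives $(1)\Rightarrow(4)$. Proposition~\ref{anchored-implies-weak} gives $(4)\Rightarrow(3)$, and Proposition~\ref{weak-bisectional-equivalence} gives $(3)\Leftrightarrow(2)$. So the whole theorem reduces to proving $(2)\Rightarrow(1)$: left algebraic amenability of $A_G=A_{\Bbbk}(X\rtimes G)$ forces amenability of $G$. Field independence then follows at once, since $(1)$ does not mention $\Bbbk$.

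For $(2)\Rightarrow(1)$ the plan is to descend to $\Bbbk G$ and then invoke Theorem~\ref{bartholdi-group-algebra}.

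\begin{enumerate}
\item Fix a finite-dimensional $E_0\subseteq\Bbbk G$ and $\varepsilon>0$. Put $E=E_0+\Bbbk 1$, and take $K\subseteq G$ finite with $e_G\in K$ and $E\subseteq\operatorname{span}\{\delta_k\mid k\in K\}$.
\item View $E$ inside $A_G$ through the embedding $\delta_g\mapsto u_g$, and apply $(2)$ with $L=E$. This gives a nonzero finite-dimensional $W\subseteq A_G$ with $\dim(W+EW)\le(1+\varepsilon)\dim W$. Since $u_{e_G}=\mathbf 1_X$ is the identity and lies in $E$, we have $W\subseteq EW$, so in fact $\dim(EW)\le(1+\varepsilon)\dim W$.
\item Use Lemma~\ref{transformation-homogeneous-model}: $A_G=\bigoplus_g u_gD$ is a free right $D$-module, and left multiplication by $\Bbbk G$ acts only on the group labels. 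Concretely, $\delta_k\cdot\sum_g u_gd_g=\sum_g u_{kg}d_g$.
\item Only finitely many $d_g$ occur in elements of $W$. Pick a finite clopen partition $\mathcal P$ of $X$ such that every coefficient $d_g$ appearing in $W$ is constant on each block.
\item Let $D_{\mathcal P}$ be the span of the characteristic functions of the blocks, so $D_{\mathcal P}\cong\Bbbk^m$ with $m=|\mathcal P|$. Then $W$ lies in $\bigoplus_g u_g D_{\mathcal P}\cong(\Bbbk G)^m$. In this identification, the coordinate indexed by $P\in\mathcal P$ is $\sum_g\lambda_{g,P}\delta_g$, which corresponds to $\sum_g u_g\lambda_{g,P}\mathbf 1_P$.
\item Check that this identification is $\Bbbk G$-linear for the coordinatewise left action. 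This holds because $u_k(u_g\mathbf 1_P)=u_{kg}\mathbf 1_P$.
\item Lemma~\ref{finite-regular-copies-folner}, applied with $R=\Bbbk G$, now yields a nonzero $V\subseteq\Bbbk G$ with $\dim(EV)\le(1+\varepsilon)\dim V$. Since $1\in E$, this gives $\dim(V+E_0V)\le(1+\varepsilon)\dim V$.
\item Hence $\Bbbk G$ is left algebraically amenable, and Theorem~\ref{bartholdi-group-algebra} gives that $G$ is amenable.
\end{enumerate}

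The main obstacle is the identification of $W$ with a finite-dimensional subspace of $m$ regular copies of $\Bbbk G$, on which $E$ acts diagonally. The key point is that the multipliers come from $\Bbbk G$ itself. They therefore never act on the $D$-coefficients, which sit on the right in the right-coefficient expansion. Consequently $u_g\,\mathbf 1_P\,\Bbbk$ is stable under left translation by the $u_k$, and the coordinatewise action matches.

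This is exactly where the non-faithfulness of the action is harmless: distinct group labels give disjoint slices, so the map $(\Bbbk G)^m\to A_G$ is injective. I would verify injectivity carefully using the uniqueness of the expansion $\sum u_g d_g$ from Lemma~\ref{transformation-homogeneous-model}.
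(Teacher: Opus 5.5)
Your proposal is correct and follows the paper's proof essentially step for step: the formal chain $(1)\Rightarrow(4)\Rightarrow(3)\Leftrightarrow(2)$, and then, for $(2)\Rightarrow(1)$, the right-coefficient expansion, a common clopen partition $\mathcal P$ placing $W$ inside the free module $\bigoplus_{P\in\mathcal P}\Bbbk G\,\mathbf 1_P\cong(\Bbbk G)^{|\mathcal P|}$ with coordinatewise left action, Lemma~\ref{finite-regular-copies-folner}, and Bartholdi's criterion. The only tidy-up is to choose $\mathcal P$ from the coefficient functions of a \emph{basis} of $W$. Over an infinite field, elements of $W$ can have infinitely many distinct coefficients, but all of them lie in the span of those finitely many basis coefficients.
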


\begin{proof}
Proposition~\ref{amenable-compact-transformation} gives $(1)\Rightarrow(4)$, Proposition~\ref{anchored-implies-weak} gives $(4)\Rightarrow(3)$, and Proposition~\ref{weak-bisectional-equivalence} gives $(3)\Leftrightarrow(2)$.

Assume $(2)$ and identify $R=\Bbbk G$ with the unital subalgebra spanned by the $u_g$. Let $E\subseteq R$ be any finite-dimensional subspace containing $1_R$, and let $\varepsilon>0$. Algebraic amenability of $A_G$ provides a non-zero finite-dimensional subspace $W\subseteq A_G$ such that
$$
\dim_{\Bbbk}(EW)=\dim_{\Bbbk}(W+EW)\leq(1+\varepsilon)\dim_{\Bbbk}(W).
$$

Expand a basis of $W$ in the right-coefficient form $\sum_g u_g d_g$ from Lemma~\ref{transformation-homogeneous-model}. There are only finitely many coefficient functions in these expansions. Choose a finite clopen partition $\mathcal{Q}$ of $X$ on whose members every one of these functions is constant. Then
$$
W\subseteq M_{\mathcal{Q}}:=\bigoplus_{P\in\mathcal{Q}}R\mathbf 1_P.
$$
This is a free left $R$-module of rank $|\mathcal{Q}|$. To verify freeness, expand a finite relation $\sum_P r_P\mathbf 1_P=0$ using $r_P=\sum_g\lambda_{g,P}u_g$. The functions $u_g\mathbf 1_P=\mathbf 1_{gP\times\{g\}}$ have pairwise disjoint non-empty supports as $(g,P)$ varies, so every $\lambda_{g,P}$ is zero. Furthermore,
$$
u_h(u_g\mathbf 1_P)=u_{hg}\mathbf 1_P,
$$
so the identification $M_{\mathcal{Q}}\cong R^{|\mathcal{Q}|}$ intertwines left multiplication by $R$ with coordinatewise multiplication. In particular, $EW\subseteq M_{\mathcal{Q}}$.

Lemma~\ref{finite-regular-copies-folner} now gives a non-zero finite-dimensional $V\subseteq R$ with
$$
\dim_{\Bbbk}(V+EV)=\dim_{\Bbbk}(EV)\leq(1+\varepsilon)\dim_{\Bbbk}(V).
$$
Testing subspaces $E$ containing $1_R$ suffices, since any finite-dimensional subspace can be enlarged by $\Bbbk1_R$. Hence $R=\Bbbk G$ is left algebraically amenable. Bartholdi's criterion, recalled in Theorem~\ref{bartholdi-group-algebra}, implies that $G$ is amenable; see \cite[Corollary 1.5]{Bartholdi08}. This proves $(2)\Rightarrow(1)$.

The equivalence with group amenability holds for every field, which proves the final assertion.
\end{proof}

\begin{remark}\label{compact-action-scope}
The theorem concerns the algebraic F{\o}lner conditions defined here, not topological amenability of a group action. Within this compact global-action setting, the weak and source-anchored properties coincide and depend only on the acting group. This does not assert the same equivalence for arbitrary ample groupoids, partial actions or groupoids of germs.
\end{remark}

\subsection{Finite-index induction}

Fix a subgroup $H\leq G$ of finite index $n$, and choose a set $T\subseteq G$ of representatives for the left cosets, with $e_G\in T$. Thus
$$
G=\bigsqcup_{t\in T}tH,\qquad |T|=n.
$$
Set $A=A_G$ and $B=A_H$. No normality assumption on $H$ will be used.

\begin{lemma}\label{finite-index-coset-model}
There is a direct-sum decomposition of right $B$-modules
$$
A=\bigoplus_{t\in T}u_tB.
$$
For each $g\in G$ and $t\in T$, let $\sigma_g(t)\in T$ and $h(g,t)\in H$ be determined by
$$
gt=\sigma_g(t)h(g,t).
$$
Then $\sigma_g$ permutes $T$. If $D_0\subseteq X$ is clopen, then
$$
\mathbf 1_{D_0\times\{g\}}u_t=u_{\sigma_g(t)}\bigl(\mathbf 1_{\sigma_g(t)^{-1}D_0}u_{h(g,t)}\bigr),
$$
where the expression in parentheses belongs to $B$.
\end{lemma}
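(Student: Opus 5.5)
The plan is to work entirely inside the homogeneous model of Lemma~\ref{transformation-homogeneous-model}, using the right-coefficient decomposition $A=\bigoplus_{g\in G}u_gD$. Since $B=A_H$ corresponds, under the same lemma applied to $H$, to $\bigoplus_{h\in H}u_hD$, and every $g\in G$ factors uniquely as $g=th$ with $t\in T$ and $h\in H$, I will regroup
\[
A=\bigoplus_{t\in T}\bigoplus_{h\in H}u_{th}D=\bigoplus_{t\in T}u_t\Bigl(\bigoplus_{h\in H}u_hD\Bigr)=\bigoplus_{t\in T}u_tB,
\]
using $u_{th}=u_tu_h$. Each summand $u_tB$ is a right $B$-submodule because $B$ is a subalgebra. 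For directness I use a support argument: elements of $u_tB$ are supported in the slices $X\times tH$. These slices are pairwise disjoint for distinct $t$, so a relation $\sum_t u_tb_t=0$ forces each term to vanish. Invertibility of $u_t$ then gives $b_t=0$.

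Next, $\sigma_g$ is well defined because $gt$ lies in exactly one left coset $\sigma_g(t)H$. It is a permutation of $T$ since it is the left action of $g$ on $G/H$, transported to representatives; its inverse is $\sigma_{g^{-1}}$.

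For the displayed identity, I first write $\mathbf 1_{D_0\times\{g\}}=\mathbf 1_{D_0}u_g$, which holds by the rectangular multiplication rule with $D_1=D_0$, $D_2=X$. Then
\[
\mathbf 1_{D_0}u_gu_t=\mathbf 1_{D_0}u_{\sigma_g(t)}u_{h(g,t)}.
\]
The commutation rule $u_sf=(f\circ s^{-1})u_s$ with $s=\sigma_g(t)$ gives
\[
\mathbf 1_{D_0}u_s=u_s\bigl(\mathbf 1_{D_0}\circ s\bigr)=u_s\mathbf 1_{s^{-1}D_0}.
\]
Substituting this yields the claimed formula. The factor $\mathbf 1_{\sigma_g(t)^{-1}D_0}u_{h(g,t)}$ is the characteristic function of a compact open bisection of $X\rtimes H$, since $h(g,t)\in H$ and $\sigma_g(t)^{-1}D_0$ is clopen, so it lies in $B$.

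I expect the only real obstacle to be the identification of $B$ inside $A$ with $\bigoplus_{h\in H}u_hD$. This requires checking that extension by zero matches the homogeneous decompositions, which holds because $u_h$ and $D$ computed in $X\rtimes H$ extend to the same functions in $X\rtimes G$. The remainder is bookkeeping with the convolution rules of Lemma~\ref{transformation-homogeneous-model}.
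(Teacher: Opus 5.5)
Your proposal is correct and follows essentially the same route as the paper: you regroup the homogeneous decomposition of Lemma~\ref{transformation-homogeneous-model} along the left cosets $tH$, get directness from the disjointness of the label sets $tH$, obtain $\sigma_g$ from the left action of $g$ on $G/H$, and derive the displayed identity from $u_gu_t=u_{\sigma_g(t)}u_{h(g,t)}$ together with the commutation rule $u_sf=(f\circ s^{-1})u_s$. The only difference is cosmetic: you compute the identity from left to right, whereas the paper expands the right-hand side.
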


\begin{proof}
Every term $u_gd$, with $d\in D$, can be written uniquely as $u_t(u_hd)$ once $g=th$ is its coset decomposition. Hence the displayed spaces span $A$. The support of $u_tB$ has labels in $tH$, so different summands have disjoint sets of labels and their sum is direct. Left multiplication by $u_t$ is a linear isomorphism from $B$ onto $u_tB$, with inverse left multiplication by $u_{t^{-1}}$.

Left multiplication by $g$ is a bijection of the finite coset space $G/H$, giving the permutation $\sigma_g$. Finally, Lemma~\ref{transformation-homogeneous-model} yields
$$
\begin{aligned}
u_{\sigma_g(t)}\bigl(\mathbf 1_{\sigma_g(t)^{-1}D_0}u_{h(g,t)}\bigr) &=\mathbf 1_{D_0}u_{\sigma_g(t)h(g,t)}\\
&=\mathbf 1_{D_0}u_{gt}=\mathbf 1_{D_0\times\{g\}}u_t.
\end{aligned}
$$
Since $h(g,t)\in H$, the coefficient belongs to $B$.
\end{proof}

\begin{proposition}[Quantitative induction]
\label{finite-index-folner-induction}
Let $L\subseteq A$ be finite-dimensional. There is a finite-dimensional subspace $E\subseteq B$, containing $1_B$, with the following property. For every non-empty clopen $C\subseteq X$ and every non-zero finite-dimensional $W_0\subseteq B\mathbf 1_C$, the space
$$
W=\bigoplus_{t\in T}u_tW_0
$$
satisfies
$$
W\subseteq A\mathbf 1_C,\ \dim_{\Bbbk}(W)=n\dim_{\Bbbk}(W_0),
$$
and
$$
\frac{\dim_{\Bbbk}(W+LW)-\dim_{\Bbbk}(W)}{\dim_{\Bbbk}(W)}\leq\frac{\dim_{\Bbbk}(W_0+EW_0)-\dim_{\Bbbk}(W_0)}{\dim_{\Bbbk}(W_0)}.
$$
\end{proposition}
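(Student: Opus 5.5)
We build $E$ from the coset model of Lemma~\ref{finite-index-coset-model} and then show that $W+LW$ decomposes coset by coset into copies of $EW_0$.

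\emph{Construction of $E$.} Choose a basis $a_1,\dots,a_q$ of $L$. By Lemma~\ref{transformation-homogeneous-model}, each $a_i$ is a finite sum of terms $\lambda\mathbf 1_{D_0\times\{g\}}$, with $D_0$ clopen and $g$ ranging over a finite set $K\subseteq G$. Fix a finite clopen partition refining all the $D_0$'s. Let $E$ be the span of $1_B=u_{e_G}=\mathbf 1_X$ together with all elements
$$
\mathbf 1_{\sigma_g(t)^{-1}D_0}u_{h(g,t)}\in B
$$
arising from Lemma~\ref{finite-index-coset-model}, for $g\in K$, $t\in T$ and $D_0$ in that partition. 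This is a finite-dimensional subspace of $B$ containing $1_B$, and it does not depend on $C$ or $W_0$.

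\emph{The easy claims.} Since $W_0\subseteq B\mathbf 1_C$, we get $u_tW_0=u_tW_0\mathbf 1_C\subseteq A\mathbf 1_C$. The sum defining $W$ is direct by the decomposition $A=\bigoplus_t u_tB$. Left multiplication by $u_t$ is injective, so $\dim W=n\dim W_0$.

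\emph{The key estimate.} For each basis term and each $t\in T$, Lemma~\ref{finite-index-coset-model} gives
$$
\mathbf 1_{D_0\times\{g\}}u_tw=u_{\sigma_g(t)}\bigl(\mathbf 1_{\sigma_g(t)^{-1}D_0}u_{h(g,t)}\bigr)w\in u_{\sigma_g(t)}EW_0
$$
for every $w\in W_0$. Hence
$$
LW\subseteq\bigoplus_{t\in T}u_tEW_0.
$$
Because $1_B\in E$, we have $W_0\subseteq EW_0$, so also $W\subseteq\bigoplus_t u_tEW_0$. Therefore $\dim(W+LW)\le n\dim(EW_0)$.

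Dividing by $\dim W=n\dim W_0$ gives
$$
\frac{\dim(W+LW)-\dim W}{\dim W}\le\frac{\dim(EW_0)-\dim W_0}{\dim W_0}.
$$
Finally, $EW_0=W_0+EW_0$ since $1_B\in E$, which is exactly the required right-hand side.

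The main obstacle is that $E$ must be chosen independently of $C$ and $W_0$, and it must absorb every coset transition $t\mapsto\sigma_g(t)$. Taking the full finite family indexed by $(g,t,D_0)$ handles this, because $T$ and $K$ are finite. It also remains to check that the coefficient computation in Lemma~\ref{finite-index-coset-model} applies to arbitrary elements of $W_0$ and not only to generators. This holds because the identity is an equality in $A$ and multiplication is associative.
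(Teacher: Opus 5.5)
Your proof is correct and follows essentially the same route as the paper. The paper takes $E$ to be spanned by $1_B$ and the coefficients $b_{i,s,t}\in B$ in the coset expansion $a_iu_t=\sum_{s\in T}u_sb_{i,s,t}$; these are linear combinations of your elements $\mathbf 1_{\sigma_g(t)^{-1}D_0}u_{h(g,t)}$, so the two choices of $E$ differ only by that regrouping. The paper then bounds $W+LW$ inside $\bigoplus_{s\in T}u_s(W_0+EW_0)$, exactly as you do. Your clopen refinement is harmless but unnecessary.
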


\begin{proof}
Choose a basis $a_1,\dots,a_q$ of $L$. For each $i$ and $t\in T$, use Lemma~\ref{finite-index-coset-model} to write
$$
a_i u_t=\sum_{s\in T}u_s b_{i,s,t},\ b_{i,s,t}\in B.
$$
Set
$$
E=\operatorname{span}_{\Bbbk}\bigl(\{1_B\}\cup\{b_{i,s,t}\mid 1\leq i\leq q,\ s,t\in T\}\bigr).
$$
This space is finite-dimensional, depends only on $L$ and the chosen transversal, and works also for $L=\{0\}$ by taking $E=\Bbbk1_B$.

For $w\in W_0$, we have $w\mathbf 1_C=w$, so
$$
(u_tw)\mathbf 1_C=u_t(w\mathbf 1_C)=u_tw.
$$
Thus every $u_tW_0$ lies in $A\mathbf 1_C$. Their sum is direct by the coset support decomposition, and each has dimension $\dim_{\Bbbk}(W_0)$ because $u_t$ is invertible. This proves the containment and dimension assertion.

Let $Q=W_0+EW_0$. Since $B\mathbf 1_C$ is a left ideal of $B$, we have $Q\subseteq B\mathbf 1_C$. For each $i,t$, and $w\in W_0$,
$$
a_i(u_tw)=\sum_{s\in T}u_s b_{i,s,t}w\in\bigoplus_{s\in T}u_sQ.
$$
The same direct sum contains $W$, because $W_0\subseteq Q$. Consequently,
$$
W+LW\subseteq\bigoplus_{s\in T}u_sQ,\ \dim_{\Bbbk}(W+LW)\leq n\dim_{\Bbbk}(Q).
$$
Subtract $\dim_{\Bbbk}(W)=n\dim_{\Bbbk}(W_0)$ and divide by this positive number. The required relative-boundary estimate follows.
\end{proof}

\begin{theorem}\label{finite-index-transformation}
Let $H\leq G$ have finite index. If $X\rtimes H$ is source-anchored left algebraically amenable over $\Bbbk$, then $X\rtimes G$ is source-anchored left algebraically amenable over $\Bbbk$.
\end{theorem}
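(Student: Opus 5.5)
The plan is to apply Proposition~\ref{finite-index-folner-induction} directly, verifying the source-anchored condition in the multiplier form of Proposition~\ref{anchored-finite-dimensional-test}. Fix a non-empty clopen $C\subseteq X$, a finite-dimensional subspace $L\subseteq A=A_G$, and $\varepsilon>0$. Since $X\rtimes H$ and $X\rtimes G$ have the same unit space $X$, the set $C$ is also a non-empty compact open subset of $(X\rtimes H)^{(0)}$. This is what allows an anchor chosen in the small groupoid to be reused in the large one.

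First, Proposition~\ref{finite-index-folner-induction} applied to $L$ yields a finite-dimensional $E\subseteq B=A_H$ containing $1_B$. This $E$ depends only on $L$ and the transversal $T$, not on $C$ or on any F{\o}lner subspace. Next, Proposition~\ref{anchored-finite-dimensional-test}, applied to the algebraically amenable groupoid $X\rtimes H$ with anchor $C$ and multiplier space $E$, produces a non-zero finite-dimensional $W_0\subseteq B\mathbf 1_C$ with
$$
\dim_{\Bbbk}(W_0+EW_0)\leq(1+\varepsilon)\dim_{\Bbbk}(W_0).
$$
The anchored ideal $B\mathbf 1_C$ here is computed inside $A_H$. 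This matches the hypothesis of the induction proposition, which asks for $W_0\subseteq B\mathbf 1_C$ with $B$ viewed in $A$ by extension by zero. The extension-by-zero embedding is an algebra map (Lemma~\ref{open-subgroupoid-extension-by-zero}, since $X\rtimes H$ is an open subgroupoid), so $\mathbf 1_C$ and the products agree in both algebras.

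Finally, set $W=\bigoplus_{t\in T}u_tW_0$. Proposition~\ref{finite-index-folner-induction} gives $W\subseteq A\mathbf 1_C$ and $W\neq 0$. It also shows that the relative boundary of $W$ under $L$ is at most that of $W_0$ under $E$, which is at most $\varepsilon$. Hence
$$
\dim_{\Bbbk}(W+LW)\leq(1+\varepsilon)\dim_{\Bbbk}(W).
$$
Proposition~\ref{anchored-finite-dimensional-test} then shows that $X\rtimes G$ is source-anchored left algebraically amenable.

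There is essentially no obstacle, because the quantitative induction carries all the work. The only point needing care is the order of quantifiers: $E$ must be fixed from $L$ before invoking amenability of $X\rtimes H$, and the anchor $C$ must be recognized as admissible for $X\rtimes H$. A shorter route also exists: by Theorem~\ref{compact-transformation-criterion}, $H$ is amenable, hence $G$ is amenable because amenability passes to finite-index overgroups, so $X\rtimes G$ is amenable. The paper evidently intends the direct induction argument instead, and that is the route I would write up.
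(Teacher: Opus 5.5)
Your proposal is correct and matches the paper's proof. The paper likewise fixes $E$ from Proposition~\ref{finite-index-folner-induction} using only $L$, takes an anchored F{\o}lner subspace $W_0\subseteq A_H\mathbf 1_C$ for $E$ from the hypothesis, and induces to $W=\bigoplus_{t\in T}u_tW_0$; your shorter route through Theorem~\ref{compact-transformation-criterion} and finite-index permanence for groups is the one the paper mentions in the remark after the theorem.
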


\begin{proof}
Let $C\subseteq X$ be non-empty and clopen, let $L\subseteq A_G$ be finite-dimensional, and let $\varepsilon>0$. Choose $E\subseteq A_H$ from Proposition~\ref{finite-index-folner-induction}. By the hypothesis and Proposition~\ref{anchored-finite-dimensional-test}, there exists a non-zero finite-dimensional subspace $W_0\subseteq A_H\mathbf 1_C$ such that
$$
\dim_{\Bbbk}(W_0+EW_0)\leq(1+\varepsilon)\dim_{\Bbbk}(W_0).
$$
Set $W=\bigoplus_{t\in T}u_tW_0$. Quantitative induction gives $W\subseteq A_G\mathbf 1_C$, $W\neq\{0\}$, and
$$
\dim_{\Bbbk}(W+LW)\leq(1+\varepsilon)\dim_{\Bbbk}(W).
$$
Proposition~\ref{anchored-finite-dimensional-test} now proves the assertion.
\end{proof}

\begin{remark}
The induction argument keeps the same source anchor $C$. It uses neither normality of $H$ nor division by $[G:H]$ inside $\Bbbk$, and therefore applies in every characteristic. Its conclusion could also be deduced from Theorem~\ref{compact-transformation-criterion} and the classical finite-index permanence property for groups. The direct proof supplies additional quantitative information: the dimension is multiplied by $[G:H]$, but the relative boundary does not increase.
\end{remark}

\subsection{Restrictions, finite orbits and examples}

\begin{corollary}\label{compact-action-subgroup-restriction}
Let $H\leq G$ be any subgroup. If $X\rtimes G$ is algebraically amenable over $\Bbbk$, then $X\rtimes H$ is algebraically amenable over $\Bbbk$. If $[G:H]<\infty$, then
$$
X\rtimes H\text{ is algebraically amenable}\quad\Longleftrightarrow\quad X\rtimes G\text{ is algebraically amenable}.
$$
\end{corollary}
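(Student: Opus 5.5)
The plan is to reduce both assertions to group amenability through Theorem~\ref{compact-transformation-criterion}, which applies to $X\rtimes G$ and $X\rtimes H$ alike. The standing hypotheses (a countable discrete group acting by homeomorphisms on a non-empty compact Hausdorff zero-dimensional space $X$) are inherited by the restricted action of any subgroup $H\leq G$. This is because $H$ is again a countable discrete group and the maps $x\mapsto hx$ are homeomorphisms of the same space $X$. Hence the theorem gives: $X\rtimes Q$ is algebraically amenable if and only if $Q$ is amenable, for $Q\in\{G,H\}$.

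For the first assertion, suppose $X\rtimes G$ is algebraically amenable. The implication $(4)\Rightarrow(1)$ of Theorem~\ref{compact-transformation-criterion} shows that $G$ is amenable. Subgroups of amenable groups are amenable, by the permanence proposition in the preliminaries, so $H$ is amenable. Then $(1)\Rightarrow(4)$ for the action of $H$, that is, Proposition~\ref{amenable-compact-transformation}, gives that $X\rtimes H$ is algebraically amenable.

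Note that a direct argument via subgroupoids is not available. Proposition~\ref{arbitrary-subgroupoids-fail} shows that open subgroupoids need not inherit the property, and $X\rtimes H$ is an open subgroupoid with the same unit space but is not an invariant reduction. This is why I route the argument through the group-level criterion, whose proof used Lemma~\ref{finite-regular-copies-folner} to descend from $A_G$ to $\Bbbk G$.

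For the finite-index equivalence, the forward direction is Theorem~\ref{finite-index-transformation}, and the reverse direction is the first assertion just proved. I expect no real obstacle. The only point needing care is to check explicitly that the restricted action satisfies the standing hypotheses of the section, so that Theorem~\ref{compact-transformation-criterion} may legitimately be applied to $H$. Optionally, I will remark that the equivalence also follows purely group-theoretically, since amenability passes between a group and its finite-index subgroups.
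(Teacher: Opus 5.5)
Your proof is correct and follows the paper's argument essentially verbatim: Theorem~\ref{compact-transformation-criterion} yields amenability of $G$, the classical subgroup property passes it to $H$, Proposition~\ref{amenable-compact-transformation} applied to the restricted action gives $X\rtimes H$, and Theorem~\ref{finite-index-transformation} supplies the converse in the finite-index case. Your explicit check that the restricted action of $H$ still satisfies the section's standing hypotheses is a small point the paper leaves implicit.
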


\begin{proof}
By Theorem~\ref{compact-transformation-criterion}, amenability of $X\rtimes G$ implies amenability of $G$. Its subgroup $H$ is amenable by the classical subgroup property recalled in Section~\ref{preliminaries}. Proposition~\ref{amenable-compact-transformation}, applied to the restricted action, gives amenability of $X\rtimes H$. When the index is finite, the converse is Theorem~\ref{finite-index-transformation}.
\end{proof}

This corollary concerns subgroupoids obtained by restricting a fixed compact global action to a subgroup. It does not assert inheritance by arbitrary open subgroupoids, which fails by Proposition~\ref{arbitrary-subgroupoids-fail}.

\begin{corollary}\label{compact-action-finite-orbit}
Suppose that $O\subseteq X$ is a finite clopen $G$-orbit and let $x\in O$. Then the following conditions are equivalent:
\begin{enumerate}
\item $X\rtimes G$ is algebraically amenable over $\Bbbk$;
\item $(X\rtimes G)|_O$ is algebraically amenable over $\Bbbk$;
\item the stabilizer $G_x=\{g\in G\mid gx=x\}$ is amenable;
\item $G$ is amenable.
\end{enumerate}
\end{corollary}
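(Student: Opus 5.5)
The plan is to prove the cycle $(1)\Leftrightarrow(4)$, $(2)\Leftrightarrow(3)$, $(3)\Rightarrow(4)$ and $(4)\Rightarrow(3)$. The equivalence $(1)\Leftrightarrow(4)$ is exactly Theorem~\ref{compact-transformation-criterion}, applied to the standing action $G\curvearrowright X$.

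For $(2)\Leftrightarrow(3)$, first identify the reduction. Since $O$ is a $G$-orbit, it is invariant, and
$$
(X\rtimes G)|_O=\{(y,g)\mid y\in O,\ g^{-1}y\in O\}=O\times G=O\rtimes G.
$$
Because $O$ is a finite Hausdorff space, it is discrete. Hence $(X\rtimes G)|_O$ is a countable discrete groupoid, and it is transitive because $O$ is a single orbit. Its isotropy group at $x$ is $\{x\}\times G_x\cong G_x$. Theorem~\ref{finite-transitive-isotropy} therefore gives $(2)\Leftrightarrow(3)$ directly. Alternatively, $(1)\Rightarrow(2)$ follows from Proposition~\ref{open-invariant-reduction}, since $O$ is open and invariant, but this is not needed for the cycle.

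For $(4)\Rightarrow(3)$, use the classical subgroup property from Section~\ref{preliminaries}: $G_x\leq G$, so $G_x$ is amenable whenever $G$ is.

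The remaining implication $(3)\Rightarrow(4)$ is the only genuinely non-formal step. The difficulty is that $G_x$ is typically not normal, so the extension property does not apply directly. I would avoid the classical finite-index fact and use the machinery of Section~\ref{finite-index-transformation-groupoids} on the compact zero-dimensional space $O$ instead, in three steps.
\begin{enumerate}
\item By orbit--stabilizer, $[G:G_x]=|O|<\infty$.
\item If $G_x$ is amenable, Proposition~\ref{amenable-compact-transformation}, applied to the restricted action of $G_x$ on $O$, shows that $O\rtimes G_x$ is algebraically amenable.
\item Theorem~\ref{finite-index-transformation} then upgrades this to $O\rtimes G$. Finally, Theorem~\ref{compact-transformation-criterion}, now applied to the action $G\curvearrowright O$, yields that $G$ is amenable.
\end{enumerate}
The only point to check here is that $O$ satisfies the standing hypotheses: it is non-empty, compact, Hausdorff and discrete, hence has a clopen basis.

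Alternatively, one may simply quote that a group containing an amenable subgroup of finite index is amenable. This is a standard fact, though it is not among the permanence properties listed earlier in the paper.
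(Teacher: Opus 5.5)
Your proof is correct. For $(1)\Leftrightarrow(4)$ and $(2)\Leftrightarrow(3)$ it is the same as the paper's; the difference is in how you join these two pairs.

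The paper applies Theorem~\ref{compact-transformation-criterion} a second time, to the action $G\curvearrowright O$. Since $(X\rtimes G)|_O=O\rtimes G$, this gives $(2)\Leftrightarrow(4)$ immediately and closes the cycle.

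You use that same application only at the end of step (c), after a detour through $O\rtimes G_x$, orbit--stabilizer and Theorem~\ref{finite-index-transformation}. But the groupoid you reach there, $O\rtimes G$, is exactly condition $(2)$. You had already derived $(2)$ from $(3)$ by Theorem~\ref{finite-transitive-isotropy}. So the detour re-proves $(3)\Rightarrow(2)$ by coset induction instead of the matrix model. Your separate appeal to the subgroup property for $(4)\Rightarrow(3)$ also becomes unnecessary once $(2)\Leftrightarrow(4)$ is read off directly.

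In particular, the step you single out as the only non-formal one (that $G_x$ need not be normal) is already handled by results you had invoked, via $(3)\Rightarrow(2)\Rightarrow(4)$. The matrix amplification in Theorem~\ref{finite-transitive-isotropy} plays the role of induction there.

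What your route buys is a demonstration of the finite-index induction theorem at work, and a second algebraic proof that a group with an amenable finite-index subgroup is amenable. The paper's route is shorter and needs neither the classical subgroup property nor Theorem~\ref{finite-index-transformation}.
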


\begin{proof}
The reduction is $O\rtimes G$. It is a countable transitive discrete groupoid because $O$ is a finite Hausdorff space. Its isotropy at $x$ consists of $(x,g)$ with $g\in G_x$, and is isomorphic to $G_x$. Thus Theorem~\ref{finite-transitive-isotropy} gives $(2)\Leftrightarrow(3)$. Applying Theorem~\ref{compact-transformation-criterion} first to $X$ and then to the non-empty compact space $O$ gives $(1)\Leftrightarrow(4)\Leftrightarrow(2)$. The clopen hypothesis also makes the passage from $(1)$ to $(2)$ an instance of Proposition~\ref{open-invariant-reduction}.
\end{proof}

\begin{example}[A dihedral action on a Cantor space]
\label{dihedral-compact-action}
Let $X=\{0,1\}^{\mathbb Z}$, and define homeomorphisms $T,J:X\to X$ by
$$
(Tx)_j=x_{j+1},\qquad (Jx)_j=x_{-j}.
$$
They satisfy $J^2=\operatorname{id}_X$ and $JTJ=T^{-1}$. Hence $r\mapsto T$ and $s\mapsto J$ define an action of the infinite dihedral group
$$
D_{\infty}=\langle r,s\mid s^2=e,\ srs=r^{-1}\rangle.
$$
The subgroup $H=\langle r\rangle\cong\mathbb Z$ has index two and transversal $\{e,s\}$. Proposition~\ref{amenable-compact-transformation} makes $X\rtimes H$ algebraically amenable, and Theorem~\ref{finite-index-transformation} gives the same conclusion for $X\rtimes D_{\infty}$.

More explicitly, for a prescribed anchor $C$, multiplier space $L$ and error $\varepsilon$, form the coefficient space $E$ from Proposition~\ref{finite-index-folner-induction}. Choose an anchored $(E,\varepsilon)$-F{\o}lner subspace $W_0$ for $A_H$ by the clopen-partition construction above. Then
$$
W=W_0\oplus u_sW_0,\ \dim_{\Bbbk}(W)=2\dim_{\Bbbk}(W_0),
$$
is an anchored $(L,\varepsilon)$-F{\o}lner subspace for $A_{\Bbbk}(X\rtimes D_{\infty})$. This gives a concrete noncommutative example on a nondiscrete compact unit space, with no restriction on the characteristic of $\Bbbk$.
\end{example}

\begin{example}
Let $X=\{0,1\}^{\mathbb N}$, let $G=F_2$ act trivially on $X$, and take $H=\{e_G\}$. Then $X\rtimes H$ is the unit groupoid on $X$ and is algebraically amenable. However, $X\rtimes G$ is not algebraically amenable by Theorem~\ref{compact-transformation-criterion}, since $F_2$ is not amenable. Here $[G:H]=\infty$, so this does not contradict the induction theorem. Thus amenability of a restricted action alone cannot justify ascent to an arbitrary larger acting group.
\end{example}

\begin{example}
\label{noncompact-regular-action}
Let $G=F_2$ and let $X=G$ with the discrete topology and the left translation action. This space is locally compact and zero-dimensional, but not compact. The map
$$
X\rtimes G\longrightarrow G\times G,\ (x,g)\longmapsto(x,g^{-1}x),
$$
is an isomorphism onto the pair groupoid, with inverse $(x,y)\mapsto(x,xy^{-1})$. It intertwines range, source and composition, and both topologies are discrete. The pair groupoid is transitive with infinite unit space, so Lemma~\ref{infinite-transitive-discrete-groupoid} shows that $X\rtimes G$ is source-anchored algebraically amenable even though $G$ is not amenable.

In this example, $X\times\{g\}$ is not compact, so its characteristic function is not an element of the Steinberg algebra. The global units $u_g$ used in the compact-action criterion are therefore absent. This explains why that argument does not rule out range escape for noncompact actions.
\end{example}

\section{Examples}\label{examples}

We now apply the preceding results to concrete Hausdorff ample groupoids. The examples include explicit F{\o}lner subspaces, finite-dimensional approximations, and obstructions that distinguish weak, source-anchored and hereditary algebraic amenability. Throughout this section, $\Bbbk$ is an arbitrary field, and algebraic amenability of a groupoid means source-anchored left algebraic amenability unless stated otherwise.

By Proposition~\ref{anchored-finite-dimensional-test}, it suffices in each positive example to start with a non-empty compact open subset $C$ of the unit space and a finite-dimensional multiplier space $L$, and construct non-zero finite-dimensional subspaces $W\subseteq A_{\Bbbk}(\mathcal G)\mathbf 1_C$ satisfying
$$
\frac{\dim_{\Bbbk}(W+LW)-\dim_{\Bbbk}(W)}{\dim_{\Bbbk}(W)}\leq\varepsilon.
$$
The constructions below show several distinct ways in which this boundary can be made small.

\subsection{Explicit F{\o}lner subspaces}

\begin{example}[Unit groupoids]\label{ex:unit-groupoids}
Let $X$ be a non-empty locally compact Hausdorff space with a basis of compact open subsets. Regard $X$ as a groupoid $\mathcal U_X$ containing only unit arrows. Thus $r(x)=s(x)=x$, and the only composable pairs are $(x,x)$, with product $xx=x$. Its Steinberg algebra is
$$
A_{\Bbbk}(\mathcal U_X)=C_c^{\mathrm{lc}}(X,\Bbbk),
$$
where $C_c^{\mathrm{lc}}(X,\Bbbk)$ denotes the algebra of locally constant, compactly supported functions, with pointwise multiplication. This follows directly from the convolution construction recalled in Section~\ref{preliminaries}; see also \cite[\S 3]{Ste09}.

Fix a non-empty compact open subset $C\subseteq X$ and a finite-dimensional subspace $L$ of this algebra. Choose a basis $f_1,\dots,f_q$ of $L$ and a point $x_0\in C$. Local constancy and the compact open basis give a non-empty compact open neighborhood $D$ of $x_0$, contained in $C$, on which every $f_i$ is constant. When $L=\{0\}$, no local constancy conditions are needed, and one may take $D=C$.

Set
$$
W=\operatorname{span}_{\Bbbk}\{\mathbf 1_D\}.
$$
Then $W\subseteq A_{\Bbbk}(\mathcal U_X)\mathbf 1_C$ and $\dim_{\Bbbk}(W)=1$. For every $i$,
$$
f_i\mathbf 1_D=f_i(x_0)\mathbf 1_D,
$$
so $LW\subseteq W$. Hence $W+LW=W$, and the algebraic boundary is zero. Therefore $\mathcal U_X$ is algebraically amenable over every field.

For instance, taking $X=\{0,1\}^{\mathbb N}$ gives a nondiscrete example with an infinite-dimensional algebra: the characteristic functions of the binary cylinders of any fixed length are linearly independent, and the number of such cylinders is unbounded. Moreover, every Hausdorff ample subgroupoid of $\mathcal U_X$ is itself a unit groupoid, so the same argument proves hereditary algebraic amenability in this family.
\end{example}

\begin{example}[Constant free abelian group bundles]
\label{ex:abelian-group-bundles}
Let $X$ be as in Example~\ref{ex:unit-groupoids}, and let $d\geq 1$. Define
$$
\mathcal B_d=X\times\mathbb Z^d,\ r(x,g)=s(x,g)=x,
$$
with multiplication $(x,g)(x,h)=(x,g+h)$ and inversion $(x,g)^{-1}=(x,-g)$. Equip $\mathcal B_d$ with the product topology, using the discrete topology on $\mathbb Z^d$. The sets $D\times\{g\}$, with $D\subseteq X$ compact open, form a basis of compact open bisections. Thus $\mathcal B_d$ is Hausdorff and ample. All its orbits are singletons, while its isotropy group at every unit is $\mathbb Z^d$.

For $f\in C_c^{\mathrm{lc}}(X,\Bbbk)$ and $g\in\mathbb Z^d$, write $f\delta_g$ for the function defined by
$$
(f\delta_g)(x,h)=f(x)\mathbf 1_{\{g\}}(h).
$$
Compact support implies that every element of $A_{\Bbbk}(\mathcal B_d)$ is a finite sum of such functions. Convolution gives
$$
(f\delta_g)\ast(h\delta_t)=(fh)\delta_{g+t}.
$$
Consequently,
$$
A_{\Bbbk}(\mathcal B_d)\cong C_c^{\mathrm{lc}}(X,\Bbbk)\otimes_{\Bbbk}\Bbbk\mathbb Z^d.
$$

We give a direct source-anchored F{\o}lner construction. Fix a non-empty compact open subset $C\subseteq X$, a finite-dimensional subspace $L\subseteq A_{\Bbbk}(\mathcal B_d)$, and $\varepsilon>0$. Choose a basis $a_1,\dots,a_q$ of $L$ and a finite set $T\subseteq\mathbb Z^d$ such that
$$
a_i=\sum_{g\in T}f_{i,g}\delta_g,\ f_{i,g}\in C_c^{\mathrm{lc}}(X,\Bbbk).
$$
Choose $x_0\in C$ and a non-empty compact open neighborhood $D\subseteq C$ of $x_0$ on which every coefficient function $f_{i,g}$ is constant. The case $L=\{0\}$ is included by taking $T=\varnothing$ and $D=C$.

Choose an integer $m\geq 0$ such that $T\cup\{0\}\subseteq[-m,m]^d$, and for every integer $N\geq 1$ put
$$
F_N=[-N,N]^d\cap\mathbb Z^d,\ W_N=\operatorname{span}_{\Bbbk}\{\mathbf 1_D\delta_f\mid f\in F_N\}.
$$
The basis functions have disjoint supports, so
$$
W_N\subseteq A_{\Bbbk}(\mathcal B_d)\mathbf 1_C,\ \dim_{\Bbbk}(W_N)=(2N+1)^d.
$$
For each $i$ and $f\in F_N$,
$$
a_i\ast(\mathbf 1_D\delta_f)=\sum_{g\in T}f_{i,g}(x_0)\mathbf 1_D\delta_{g+f}.
$$
Since $g+f\in F_{N+m}$, this implies $W_N+LW_N\subseteq W_{N+m}$. Therefore
$$
\frac{\dim_{\Bbbk}(W_N+LW_N)}{\dim_{\Bbbk}(W_N)}\leq\left(\frac{2N+2m+1}{2N+1}\right)^d\longrightarrow 1.
$$
For sufficiently large $N$, the required F{\o}lner inequality follows. Thus $\mathcal B_d$ is algebraically amenable over every field.

When $X$ is a point, the algebra is the Laurent polynomial algebra $\Bbbk[t_1^{\pm1},\dots,t_d^{\pm1}]$. For a Cantor space $X$, the same construction gives nondiscrete examples with nontrivial infinite isotropy. In this case the approximation takes place in the isotropy directions, rather than by escaping to new range units.
\end{example}

\begin{example}[Finite matrix groupoids]
\label{ex:finite-matrix-groupoids}
Let $I_n=\{1,\dots,n\}$, where $n\geq 1$, and let $\Gamma$ be a countable discrete group. The discrete transitive groupoid
$$
\mathcal M_n(\Gamma)=I_n\times\Gamma\times I_n
$$
has range $r(i,g,j)=i$, source $s(i,g,j)=j$, multiplication
$$
(i,g,j)(j,h,k)=(i,gh,k),
$$
and inversion $(i,g,j)^{-1}=(j,g^{-1},i)$. Its matrix model is
$$
A_{\Bbbk}(\mathcal M_n(\Gamma))\cong M_n(\Bbbk\Gamma),\ \mathbf 1_{\{(i,g,j)\}}\longmapsto E_{ij}(g),
$$
where $E_{ij}(a)$ denotes the matrix with entry $a$ in position $(i,j)$ and zero elsewhere. This is the model of Lemma~\ref{finite-transitive-matrix-model}; see also \cite[Example 3.2 and Proposition 4.3]{ClarkHazrat20}.

For $\Gamma=\mathbb Z$, identify $\Bbbk\Gamma$ with $\Bbbk[t,t^{-1}]$. Let $C\subseteq I_n$ be non-empty, choose $j\in C$, and write
$$
e_C=\sum_{c\in C}E_{cc}(1).
$$
Given a finite-dimensional subspace $L\subseteq M_n(\Bbbk[t,t^{-1}])$, choose $m\geq 0$ such that every entry of every matrix in $L$ is a linear combination of $t^p$ with $|p|\leq m$. For $N\geq 1$, set
$$
W_N=\operatorname{span}_{\Bbbk}\{E_{ij}(t^q)\mid 1\leq i\leq n,\ -N\leq q\leq N\}.
$$
Then $W_N\subseteq M_n(\Bbbk[t,t^{-1}])e_C$ and
$$
\dim_{\Bbbk}(W_N)=n(2N+1).
$$
The identity
$$
E_{ab}(t^p)E_{ij}(t^q)=\delta_{b,i}E_{aj}(t^{p+q})
$$
shows that $W_N+LW_N\subseteq W_{N+m}$. Hence
$$
\frac{\dim_{\Bbbk}(W_N+LW_N)}{\dim_{\Bbbk}(W_N)}\leq\frac{2N+2m+1}{2N+1}=1+\frac{2m}{2N+1}.
$$
Thus finite matrix amplification preserves these explicit F{\o}lner approximations.

More generally, Theorem~\ref{finite-transitive-isotropy} gives
$$
\mathcal M_n(\Gamma)\text{ is algebraically amenable}\quad\Longleftrightarrow\quad
\Gamma\text{ is amenable}.
$$
In particular, $\mathcal M_n(\mathbb Z^d)$ is algebraically amenable, whereas $\mathcal M_n(F_2)$ is not, since $F_2$ is non-amenable; see \cite[Theorem 5]{Cortez22}. Thus neither a finite unit space nor finite matrix amplification removes the obstruction carried by non-amenable isotropy.
\end{example}

\subsection{An AF groupoid with Cantor unit space}

\begin{example}[Binary tail equivalence]
\label{ex:binary-tail-equivalence}
Let $X=\{0,1\}^{\mathbb N}$ with the product topology, and let $\sigma:X\to X$ be the left shift. Consider the tail-equivalence groupoid
$$
\mathcal R=\{(x,y)\in X\times X\mid\sigma^n(x)=\sigma^n(y)\text{ for some }n\geq 0\},
$$
with $r(x,y)=x$, $s(x,y)=y$, inversion $(x,y)^{-1}=(y,x)$, and multiplication $(x,y)(y,z)=(x,z)$.

If $\alpha$ and $\beta$ are binary words of the same finite length, define
$$
Z(\alpha,\beta)=\{(\alpha z,\beta z)\mid z\in X\}.
$$
Here $\alpha z$ denotes concatenation. Equip $\mathcal R$ with the topology having these sets as a basis, rather than the subspace topology inherited from $X\times X$. Each $Z(\alpha,\beta)$ is a compact open bisection, with range $\alpha X$ and source $\beta X$. These charts make $\mathcal R$ a Hausdorff ample groupoid with unit space $X$.

For $n\geq 0$, put
$$
\mathcal R_n=\{(x,y)\in X\times X\mid\sigma^n(x)=\sigma^n(y)\}.
$$
It is the finite disjoint union of the bisections $Z(\alpha,\beta)$ with $|\alpha|=|\beta|=n$. Hence $\mathcal R_n$ is compact, open and principal, and $\mathcal R_n^{(0)}=X$. Moreover,
$$
\mathcal R_n\subseteq\mathcal R_{n+1},\ \mathcal R=\bigcup_{n\geq 0}\mathcal R_n.
$$
Thus $\mathcal R$ is an AF groupoid in the sense of Definition~\ref{af-groupoid-definition}; compare \cite[\S 5.2]{ClarkHazrat20}.

To exhibit the F{\o}lner subspaces explicitly, set
$$
b_{\alpha,\beta}=\mathbf 1_{Z(\alpha,\beta)}, \ B_n=\operatorname{span}_{\Bbbk}
\{b_{\alpha,\beta}\mid |\alpha|=|\beta|=n\}.
$$
For words of length $n$, convolution gives
$$
b_{\alpha,\beta}\ast b_{\gamma,\delta}=\delta_{\beta,\gamma}b_{\alpha,\delta}.
$$
The supports are pairwise disjoint, so these functions are linearly independent matrix units. Consequently,
$$
B_n\cong M_{2^n}(\Bbbk),\ \sum_{|\alpha|=n}b_{\alpha,\alpha}=\mathbf 1_X.
$$
The disjoint decomposition
$$
Z(\alpha,\beta)=Z(\alpha0,\beta0)\sqcup Z(\alpha1,\beta1)
$$
yields
$$
b_{\alpha,\beta}=b_{\alpha0,\beta0}+b_{\alpha1,\beta1},
$$
so $B_n\subseteq B_{n+1}$. Every locally constant, compactly supported function is a finite linear combination of characteristic functions of these basic bisections: cover its compact support by finitely many such bisections on which it is constant, and refine them to a common word length. Therefore
$$
A_{\Bbbk}(\mathcal R)=\bigcup_{n\geq 0}B_n.
$$
The finite-dimensional algebras $B_n$ should not be confused with the full algebras $A_{\Bbbk}(\mathcal R_n)$, which allow arbitrary locally constant dependence on the common infinite tail.

Now let $C\subseteq X$ be non-empty compact open and let $L\subseteq A_{\Bbbk}(\mathcal R)$ be finite-dimensional. Choose $n$ large enough that $L\subseteq B_n$ and $C$ is a union of cylinders of length $n$. If $\mathcal C\subseteq\{0,1\}^n$ is the non-empty collection of words defining those cylinders, then
$$
\mathbf 1_C=\sum_{\beta\in\mathcal C}b_{\beta,\beta}\in B_n.
$$
Set $W=B_n\mathbf 1_C$. The matrix units give
$$
W=\operatorname{span}_{\Bbbk}\{b_{\alpha,\beta}\mid |\alpha|=n,\ \beta\in\mathcal C\},\ \dim_{\Bbbk}(W)=2^n|\mathcal C|.
$$
In particular, $W\neq\{0\}$ and $W\subseteq A_{\Bbbk}(\mathcal R)\mathbf 1_C$. Since $L\subseteq B_n$,
$$
LW\subseteq B_n(B_n\mathbf 1_C)\subseteq B_n\mathbf 1_C=W.
$$
Thus the algebraic boundary is zero. This realizes Corollary~\ref{af-groupoids} concretely in an infinite-dimensional, noncommutative Steinberg algebra with nondiscrete unit space.
\end{example}

\subsection{The role of anchoring and orbit size}

\begin{example}[Infinite matrix amplification]\label{ex:infinite-matrix-amplification}
Let $\Gamma=F_2$, and equip
$$
\mathcal M_{\infty}(\Gamma)=\mathbb N\times\Gamma\times\mathbb N
$$
with the discrete topology and the same operations as in Example~\ref{ex:finite-matrix-groupoids}. Its Steinberg algebra identifies with
$$
M_{\mathbb N}^{\mathrm{fin}}(\Bbbk\Gamma),
$$
the algebra of matrices over $\Bbbk\Gamma$ with only finitely many non-zero entries in total. Indeed, the assignment $\mathbf 1_{\{(i,g,j)\}}\mapsto E_{ij}(g)$ maps the natural basis to the matrix group-element basis and respects multiplication.

Let $C\subseteq\mathbb N$ be non-empty and finite, and let $L$ be a finite-dimensional multiplier space. The union $S$ of the supports of a basis of $L$ is finite. Consequently,
$$
s(S)=\{s(\alpha)\mid\alpha\in S\}
$$
is finite. Choose $i_0\in\mathbb N\setminus s(S)$ and $c_0\in C$, and put
$$
\beta=(i_0,e_{\Gamma},c_0),\ W=\operatorname{span}_{\Bbbk}\{\mathbf 1_{\{\beta\}}\}.
$$
Since $s(\beta)=c_0$, the space $W$ is contained in $A_{\Bbbk}(\mathcal M_{\infty}(\Gamma))\mathbf 1_C$. No $\alpha\in S$ can multiply $\beta$ on the left, because composability would require $s(\alpha)=i_0$. Hence
$$
LW=\{0\},\ \dim_{\Bbbk}(W+LW)=\dim_{\Bbbk}(W)=1.
$$
Thus $\mathcal M_{\infty}(F_2)$ is algebraically amenable over every field, as predicted by Lemma~\ref{infinite-transitive-discrete-groupoid}.

Nevertheless, for every $u\in\mathbb N$,
$$
\mathcal M_{\infty}(F_2)|_{\{u\}}=\{u\}\times F_2\times\{u\}\cong F_2
$$
is not algebraically amenable by Theorem~\ref{group-case}. In particular, the ambient groupoid is not hereditarily algebraically amenable. The set $\{u\}$ is compact open but not invariant, so this does not contradict Proposition~\ref{open-invariant-reduction}.

There is also a contrast with the directed-union result. Writing
$$
\mathcal M_{\infty}(F_2)=\bigcup_{n\geq 1}\mathcal M_n(F_2),
$$
we obtain an algebraically amenable directed union whose every displayed finite-unit stage is non-amenable. Thus the implication in Proposition~\ref{directed-unions} cannot be reversed. The zero boundary here is caused by range escape, not by amenability of the isotropy groups.
\end{example}

\begin{example}[An amenable summand does not suffice for anchoring]
\label{ex:weak-versus-anchored}
Let $\mathcal T$ be the groupoid consisting of one unit and consider
$$
\mathcal D=\mathcal T\sqcup F_2.
$$
Then
$$
A_{\Bbbk}(\mathcal D)\cong\Bbbk\oplus\Bbbk F_2.
$$
For every finite-dimensional subspace $L$ of this algebra, the fixed one-dimensional space
$$
W=\Bbbk\oplus\{0\}
$$
satisfies $LW\subseteq W$, because multiplication is componentwise. Therefore the algebra is algebraically amenable, and $\mathcal D$ is weakly bisectionally algebraically amenable by Proposition~\ref{weak-bisectional-equivalence}.

Let $u$ be the unit of the $F_2$ component. Anchoring at $C=\{u\}$ gives
$$
A_{\Bbbk}(\mathcal D)\mathbf 1_C=\{0\}\oplus\Bbbk F_2.
$$
If $\mathcal D$ were source-anchored algebraically amenable, testing this anchor against every multiplier space of the form $\{0\}\oplus E$, with $E\subseteq\Bbbk F_2$ finite-dimensional, would make $\Bbbk F_2$ algebraically amenable. This contradicts Theorem~\ref{bartholdi-group-algebra}. Thus the weak and anchored properties differ even for a unital Steinberg algebra with a finite unit space.
\end{example}

\subsection{A nondiscrete obstruction with amenable isotropy}

\begin{example}[The two-loop graph groupoid]\label{ex:two-loop-obstruction}
Retain $X=\{0,1\}^{\mathbb N}$ and the shift $\sigma$ from Example~\ref{ex:binary-tail-equivalence}. Define
$$
\mathcal G_2=\{(x,m-n,y)\mid x,y\in X,\ m,n\geq 0,\ \sigma^m(x)=\sigma^n(y)\}.
$$
The range and source maps are $r(x,k,y)=x$ and $s(x,k,y)=y$, and the operations are
$$
(x,k,y)(y,\ell,z)=(x,k+\ell,z),\ (x,k,y)^{-1}=(y,-k,x).
$$
For arbitrary finite binary words $\alpha,\beta$, including the empty word, put
$$
Z_2(\alpha,\beta)=\{(\alpha z,|\alpha|-|\beta|,\beta z)\mid z\in X\}.
$$
These sets form the standard basis of compact open bisections. With this topology, $\mathcal G_2$ is the Hausdorff ample groupoid of the graph with one vertex and two loops; see \cite[\S 3.5]{ClarkHazrat20}.

Let $\varnothing$ denote the empty word, and, for $i\in\{0,1\}$, set
$$
U_i=Z_2(i,\varnothing),\ p_i=\mathbf 1_{U_i},\ q_i=\mathbf 1_{U_i^{-1}}.
$$
Each $U_i$ has source $X$ and range $iX$. The range sets $0X$ and $1X$ are disjoint and cover $X$. Bisection multiplication therefore gives
$$
q_i\ast p_j=\delta_{i,j}\mathbf 1_X,\ p_0\ast q_0+p_1\ast q_1=\mathbf 1_X.
$$
These are the familiar relations of the Leavitt algebra $L_{\Bbbk}(1,2)$, which identifies with $A_{\Bbbk}(\mathcal G_2)$; see \cite[Example 4.4]{ClarkHazrat20} and \cite[Definition 5.1]{AraLiLledoWu18}.

The relations give a direct obstruction to algebraic F{\o}lner subspaces. Write $A=A_{\Bbbk}(\mathcal G_2)$ and take any non-zero finite-dimensional subspace $W\subseteq A$. The linear map
$$
W\oplus W\longrightarrow A,\ (w_0,w_1)\longmapsto p_0\ast w_0+p_1\ast w_1,
$$
is injective: multiplying a zero image on the left by $q_0$ gives $w_0=0$, and multiplying by $q_1$ gives $w_1=0$. Consequently,
$$
\dim_{\Bbbk}(p_0W+p_1W)=2\dim_{\Bbbk}(W).
$$
For the fixed two-dimensional multiplier space
$$
L=\operatorname{span}_{\Bbbk}\{p_0,p_1\},
$$
we therefore have
$$
\dim_{\Bbbk}(W+LW)\geq 2\dim_{\Bbbk}(W)
$$
for every non-zero finite-dimensional $W\subseteq A$. Taking $0<\varepsilon<1$ excludes the F{\o}lner inequality. Thus $A$ is not algebraically amenable, and $\mathcal G_2$ is neither weakly nor source-anchored algebraically amenable over any field. This calculation is the concrete two-loop instance of the obstruction discussed in \cite[Corollary 4.9]{AraLiLledoWu18}.

Nevertheless, every isotropy group of $\mathcal G_2$ is amenable. Indeed, for each $x\in X$, the map
$$
(\mathcal G_2)_x^x\longrightarrow\mathbb Z,\ (x,k,x)\longmapsto k,
$$
is an injective group homomorphism. Hence the isotropy is trivial or infinite cyclic. Every orbit is also infinite: the distinct points $0^n1x$, for $n\geq 0$, all belong to the orbit of $x$, since $\sigma^{n+1}(0^n1x)=x$.

Thus amenable isotropy and infinite orbits do not imply source-anchored algebraic amenability in a nondiscrete groupoid. There is no contradiction with Theorem~\ref{hereditary-discrete-characterization} or Lemma~\ref{infinite-transitive-discrete-groupoid}, both of which require a discrete groupoid.

Finally, the map $(x,y)\mapsto(x,0,y)$ identifies the AF groupoid $\mathcal R$ of Example~\ref{ex:binary-tail-equivalence} with the open subgroupoid of $\mathcal G_2$ consisting of degree-zero arrows. These two groupoids have the same unit space, but only $\mathcal R$ is algebraically amenable. In $\mathcal G_2$, the compact open bisections $U_0,U_1$ have full source $X$, but their ranges are the proper clopen subsets $0X$ and $1X$. Their left multipliers act on every source anchor, so moving the range cannot avoid the uniform dimension growth.
\end{example}

\begin{remark}
The examples separate the three approximation properties considered in the paper. Unit groupoids provide hereditary examples; infinite matrix amplification is source-anchored algebraically amenable but not hereditary; and $\mathcal T\sqcup F_2$ satisfies only the weak bisectional algebraic condition. The two-loop groupoid gives a further obstruction that is not detected by its individual isotropy groups. Thus the conditions depend not only on the groups attached to units, but also on how compact open bisections act across the unit space.
\end{remark}

\section{Future work}\label{future-work}

The compact-action criterion in Section~\ref{finite-index-transformation-groupoids} identifies precisely what the present condition detects for global actions on non-empty compact zero-dimensional spaces: amenability of the acting group. A subsequent study should therefore concentrate on noncompact global actions and on partial actions, where the global bisections used in that proof need not belong to the Steinberg algebra. In these settings, one should determine how compact source anchors, stabilizers and the availability of arrows with distant ranges affect the F{\o}lner condition. The regular action in Example~\ref{noncompact-regular-action} gives a concrete reason why the compact criterion cannot simply be extended unchanged.

The quantitative induction estimate suggests a separate line of investigation. Rather than asking again about finite-index subgroups of compact global actions, one can ask which inclusions of ample subgroupoids admit a finite decomposition compatible with the source anchors and with multiplication by compact open bisections. Such a decomposition could extend the induction argument to suitable partial actions, cocycle constructions and finite extensions beyond transformation groupoids. Establishing precise hypotheses is essential, since openness of a subgroupoid alone does not imply inheritance of source-anchored algebraic amenability.

An algebraic alternative to the F{\o}lner condition should also account for more than one type of obstruction. Non-amenable isotropy in a one-unit groupoid and the two disjoint copies produced by the bisections of the two-loop groupoid are different mechanisms. Thus paradoxicality of compact open subsets of the unit space alone cannot describe every failure of algebraic amenability. A useful formulation may have to combine bisectional decompositions with dimension-theoretic data from the Steinberg algebra, in the spirit of \cite{AraLiLledoWu18}. The examples in Section~\ref{examples} provide tests for any proposed criterion.

Finally, the failure of permanence under compact open reductions exhibited by infinite matrix amplification motivates approximation conditions that control the range as well as the source. One may investigate whether suitably strengthened conditions have better behavior under full reductions, groupoid equivalence or Morita equivalence, in light of the relation established in \cite{ClarkSims15}, rather than expecting the present condition to have these properties without additional assumptions. Dependence on the coefficient field remains another question beyond the classes treated here. The group case, the hereditary countable discrete case and compact global transformation groupoids are field-independent; understanding what remains true for general ample groupoids would help distinguish algebraic effects from those imposed by the topology.

\section*{Statements and Declarations}

\noindent\textbf{Funding.}
No funding was received to assist with the preparation of this manuscript.

\medskip
\noindent\textbf{Competing interests.}
The authors have no relevant financial or non-financial interests to disclose.

\medskip
\noindent\textbf{Data availability.}
No datasets were generated or analyzed during the current study.

\end{document}